\DeclareMathAlphabet{\mathpzc}{OT1}{pzc}{m}{it}
\newtheorem{theorem}{Theorem}[section]
\newtheorem{lemma}[theorem]{Lemma}
\newtheorem{proposition}[theorem]{Proposition}
\newtheorem{corollary}[theorem]{Corollary}
\newtheorem{definition}[theorem]{Definition}
\theoremstyle{definition}
\newtheorem{remark}[theorem]{Remark}
\newcommand{\al}{\alpha}
\newcommand{\be}{\beta}
\newcommand{\de}{\delta}
\newcommand{\e}{\epsilon}
\newcommand{\f}{\frac}
\newcommand{\lra}{\longrightarrow}
\newcommand{\ra}{\rightarrow}
\newcommand{\sbq}{\subseteq}
\newcommand{\spq}{\supseteq}
\newcommand{\x}{\times}
\newcommand{\z}{\zeta}
\newcommand{\SL}{{\mathcal{L}}}
\newcommand{\bbn}{{\Bbb N}}
\newcommand{\bbr}{{\Bbb R}}
\newcommand{\supp}{\textup{\textrm{supp}}\xspace}
\newcommand{\diag}{\textup{\textrm{diag}}\xspace}
\newcommand{\trace}{\textup{\textrm{trace}}\xspace}
\newcommand{\Ad}{\textup{\textrm{Ad}}\xspace}
\newcommand{\Mat}{\textup{\textrm{Mat}}\xspace}
\newcommand{\rspan}{\textup{$\mathbb{R}$\textrm{-span}}\xspace}
\newcommand{\Sl}{\textup{\textrm{SL}}\xspace}
\newcommand{\Gl}{\textup{\textrm{GL}}\xspace}
\renewcommand*{\@fnsymbol}[1]{\ensuremath{\ifcase#1\or  \dagger\or \ddagger\or
   \mathsection\or \mathparagraph\or \|\or **\or \dagger\dagger
   \or \ddagger\ddagger \else\@ctrerr\fi}}
\title{Random Walks on Homogeneous Spaces by Sparse Solvable Measures}
\author{C. Davis Buenger
\thanks{This material is based upon work supported by the National Science Foundation under Grant No. 0932078 000 while the author was residence at the Mathematical Science Research Institute in Berkeley, California, during the Spring 2015 semester.}}
\begin{document}

\allowdisplaybreaks
\maketitle

\begin{abstract} The paper analyzes a specific class of random walks on quotients of $X:=\Sl(k,\bbr)/
\Gamma$ for a lattice $\Gamma$. Consider a one parameter diagonal subgroup, $\{g_t\}$, with an associated
abelian expanding horosphere, $U\cong \bbr^k$,  and let $\phi:[0,1]\ra U$ be 
a sufficiently smooth curve satisfying the condition that that the derivative of $\phi$ spends $0$ time in any one subspace of $\bbr^k$. Let $
\mu_U$ be the measure defined as $\phi_*\lambda_{[0,1]},$ where $\lambda_{[0,1]}$ is the Lebesgue measure 
on $[0,1]$. Let $\mu_A$ be a measure on the full diagonal subgroup of $\Sl(k,\bbr)$, such that almost surely 
the random walk on the diagonal subgroup $A$ with respect to this measure grows exponentially in 
the direction of the cone expanding $U$. Then the random walk starting at any point $z\in X$, and alternating steps given by $\mu_U$ and $\mu_A$ equidistributes 
respect to $\Sl(k,\bbr)$-invariant measure on $X$. Furthermore, the measure defined by $\mu_A*\mu_U*\dots*\mu_A*
\mu_U*\delta_z$ converges exponentially fast to the $\Sl(k,\bbr)$-invariant measure on $X$. 
\end{abstract}
\section{Introduction}

Here we continue the study of random walks on homogeneous spaces initiated by A. Eskin and G. Margulis \cite{EM} and furthered by Y. Benoist and J.-F. Quint \cite{BQ}, \cite{BQ2}, \& \cite{BQ3}. Eskin and Margulis  considered the case where $G$ was a Lie group, $\Gamma$ a lattice in $G$, and $\mu$ a probability measure with finite moments on $G$  such that  the group 
  generated by the support of $\mu$ was semisimple. Under these conditions, Eskin and Margulis  show that the random walk with
   steps given by $\mu$ starting at any point $x\in X:=G/\Gamma$ is non-divergent. To be precise,  let $\delta_x$ be the 
   Dirac measure at $x$, for a measure $\mu$ let $\mu^{*n}$ denote the $n$-fold convolution of $\mu$.  For a measure $\mu$ on $G$ let $H_\mu$ denote the noncompact part  of the Zariski closure of the support of $\mu$. With this notation, Eskin and Margulis  showed the following:
\begin{theorem}[\cite{EM} Theorem 2.1] Suppose $H_\mu$ is semisimple, and for all $g\in G$, $gH_\mu g^{-1}$ is not contained in any proper $\Gamma$-rational parabolic subgroup of $G$. Then for every compact set $C\subseteq X$, there exists a compact set $K\supseteq C$ such that for every $x \in C$ and every $n>0$,
$$\mu^{*n}*\delta_x(K)\geq 1-\epsilon.$$
\end{theorem}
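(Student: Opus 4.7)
The plan is to reduce non-divergence to a Foster--Lyapunov-type drift condition. Specifically, I would seek a proper function $f \colon X \to \bbr_{\geq 0}$ (so that $\{f \leq M\}$ is relatively compact for every $M$) together with an integer $n_0$, constants $\alpha \in (0,1)$ and $\beta \geq 0$ satisfying
\[
(\mu^{*n_0} f)(x) \;:=\; \int_G f(gx)\, d\mu^{*n_0}(g) \;\leq\; \alpha\, f(x) + \beta
\]
for all $x \in X$. Granted such an inequality, iterating in blocks of length $n_0$ yields $(\mu^{*kn_0} f)(x) \leq \alpha^k f(x) + \beta/(1-\alpha)$, while the $n \bmod n_0$ remainder is absorbed using a moment estimate that bounds how much a single convolution by $\mu$ can inflate $f$. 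Consequently $\sup_n (\mu^{*n} f)(x)$ is finite and uniformly bounded on any compact $C \subseteq X$, and Markov's inequality applied to the proper function $f$ produces the required compact set $K = \{f \leq M\} \cup C$ carrying mass $\geq 1-\epsilon$ under $\mu^{*n}*\delta_x$.

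The function $f$ would be built from reduction theory for $\Gamma$. One chooses representatives $P_1,\dots,P_r$ of the finitely many $\Gamma$-conjugacy classes of proper $\Gamma$-rational parabolic subgroups of $G$ and attaches to each a height function $\alpha_{P_i} \colon X \to \bbr_{\geq 0}$ measuring how far $x\Gamma$ has escaped into the cusp associated with $P_i$. In the prototypical case $G=\Sl(k,\bbr)$, $\Gamma=\Sl(k,\bbz)$ one may take the $\alpha_{P_i}$ to be reciprocals of covolumes of rational subspaces of the lattice $x\bbz^k$, as in the Dani correspondence. One then sets $f$ to be a weighted sum (or maximum) of the $\alpha_{P_i}$, with weights tuned so that the drift inequality holds simultaneously at each cusp; properness of $f$ is standard reduction theory.

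The main step, and the one I expect to be the principal obstacle, is verifying the contraction inequality for each $\alpha_{P_i}$. The function $\alpha_{P_i}$ transforms under $g\in G$ essentially through the action of $g$ on the flag variety $G/P_i$, and the hypothesis that no conjugate $gH_\mu g^{-1}$ lies inside $P_i$, combined with semisimplicity of $H_\mu$, should force the $\mu$-averaged action on $G/P_i$ to push mass away from the cusp. Quantitatively: semisimplicity gives a positive-proportion supply of $R$-regular elements in $\supp(\mu^{*n_0})$ for $n_0$ large, and non-containment in $P_i$ ensures that the Cartan/Jordan decomposition of a generic such element has a component transverse to $P_i$, which on $G/P_i$ contracts $\alpha_{P_i}$ by a definite factor strictly less than $1$ on the region where $\alpha_{P_i}$ is large. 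Averaging this local contraction against $\mu^{*n_0}$, combined with a first-moment bound on $\mu$ to rule out catastrophic excursions of $f$, yields the drift. The delicate point is uniformity: the factor $\alpha$ must be independent of $x$ and hold simultaneously for every $P_i$, so one needs a compactness argument over the finite collection of parabolics together with a careful choice of $n_0$ that is large enough for the contraction on every flag variety at once.
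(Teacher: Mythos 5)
This statement is not proved in the paper at all: it is quoted verbatim as background, with the proof residing in Eskin--Margulis \cite{EM}. So the only meaningful comparison is with the original argument of \cite{EM}, and your outline is essentially a faithful reconstruction of it: Eskin and Margulis also prove non-divergence via a Foster--Lyapunov (``contraction hypothesis'') inequality $\int f(gx)\,d\mu^{*n_0}(g)\leq \alpha f(x)+\beta$ for a proper function $f$ assembled, through reduction theory, from height functions attached to the finitely many classes of $\Gamma$-rational parabolics (covolumes of rational subspaces of the lattice in the $\Sl(k,\bbz)$ model), and they finish exactly as you do, by iterating in blocks and applying Markov's inequality, with a moment bound handling the remainder steps. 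The one place where your sketch diverges from their actual mechanism is the justification of the contraction: you invoke a positive proportion of $R$-regular elements in $\supp(\mu^{*n_0})$ whose Cartan components are transverse to each $P_i$, whereas \cite{EM} instead prove a representation-theoretic averaging lemma --- for a semisimple $H_\mu$ with no relevant invariant vectors in the finitely many exterior-power representations singled out by the parabolics, the $\mu^{*n_0}$-average of $\|gv\|^{-\delta}$ is bounded by $c\|v\|^{-\delta}$ with $c<1$ for small $\delta$ and large $n_0$, uniformly in $v$; the hypothesis that no conjugate $gH_\mu g^{-1}$ lies in a proper $\Gamma$-rational parabolic is what guarantees the absence of such invariant (or uniformly non-expanded) vectors. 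Your heuristic is in the same spirit, but as you yourself flag, the uniformity of the contraction factor over all $x$ and all cusps is precisely the technical heart of the proof, and in \cite{EM} it is delivered by that averaging lemma rather than by a density-of-regular-elements argument; to make your version airtight you would still need an estimate of that uniform, quantitative type. As a proof proposal for the cited theorem it is therefore the right strategy with one step (the uniform contraction on each $G/P_i$) asserted rather than established.
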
   
 Benoist and Quint showed equidistribution in a homogeneous subspaces under stronger assumptions.
 \begin{theorem}[Theorem 1.1 \& 1.2 \cite{BQ3}]\label{BQT} Let $G$ be real Lie group with Lie algebra $\mathfrak{g}$, $\Gamma$ be a lattice in $G$, 
 and $\Lambda$ be a compactly generated sub-semigroup of $G$. %Let $\overline{\Ad \Lambda}^Z$ denote the zariski closure of the image of $\Lambda$ under the adjoint representation of $G$.
  Assume that the Zariski closure of $\Ad(\Lambda)\subseteq \Gl(\mathfrak{g})$ is semisimple with no compact factors. Then
 \begin{enumerate}
 \item[(a)] For every $x$ in $G/\Gamma$, there exists a closed subgroup $H\spq\Lambda$ of $G$ such that $\overline{\Lambda x}=Hx$ and $Hx$ carries an $H$-invariante probability measure $\nu_x$.
\item[(b)] If $\mu$ is a compactly supported Borel probability measure on $\Lambda$ whose support spans a dense sub-semigroup of $\Lambda$, then 
$$\f{1}{n}\sum_{k=0}^{n-1}\mu^{*k}*\delta_x\lra\nu_x.$$
\item[(c)] More precisely, if $g_1,\dots,g_n,\dots$ is a sequence of independent identically distributed random elements of $\Lambda$ with Law $\mu$, then almost surely,
$$\f1n\sum_{k=0}^{n-1}\delta_{g_k\dots g_1x}\lra\nu_x.$$
 \end{enumerate}
 \end{theorem}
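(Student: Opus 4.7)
The plan is to reduce the theorem to the classification of $\mu$-stationary probability measures on $X := G/\Gamma$ (those $\nu$ with $\mu * \nu = \nu$) and then deduce equidistribution, following the Benoist--Quint blueprint. The first stage is non-divergence. Semisimplicity of the Zariski closure of $\Ad(\Lambda)$ together with the no-compact-factors hypothesis, plus a check that no conjugate of $H_\mu$ lies in a proper $\Gamma$-rational parabolic of $G$, lets me invoke the Eskin--Margulis theorem quoted above. Consequently every weak-$*$ subsequential limit of $\frac1n \sum_{k=0}^{n-1} \mu^{*k}*\delta_x$ is a probability measure on $X$, and an averaging argument shows any such limit is $\mu$-stationary.

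Second, and this is the heart of the matter, I would classify $\mu$-stationary ergodic probability measures on $X$. The target claim is: every such $\nu$ is \emph{homogeneous}, i.e., there is a closed subgroup $H \subseteq G$ containing the closed sub-semigroup generated by $\supp(\mu)$ such that $\nu$ is the $H$-invariant probability measure on a single closed orbit $Hy \subset X$. I would use an exponential-drift strategy. Apply the Oseledets multiplicative ergodic theorem to the cocycle $g \mapsto \Ad(g)$ on $\mathfrak g$; the semisimple/no-compact-factors hypothesis guarantees a non-trivial Lyapunov spectrum with well-defined expanding and contracting subspaces, matched to horospherical subgroups of $G$. Combine this with Ledrappier--Young-type entropy/dimension identities for the conditional measures of $\nu$ along the stable and unstable foliations of the walk. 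If $\nu$ fails to carry extra invariance along some expanding unipotent direction $U \subseteq G$, I would construct two $\nu$-typical trajectories deformed by a tiny unipotent element, push them forward by the walk, and read off in the limit (via Oseledets charts) a non-trivial element of $G$ preserving $\nu$. Iterating and taking the closed stabiliser produces the required $H$.

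Third, I would deduce the three conclusions. For (a), apply the classification to an ergodic component of any weak-$*$ subsequential limit $\nu_x$ of $\frac1n \sum \mu^{*k}*\delta_x$ to obtain a closed subgroup $H \supseteq \Lambda$ (using the density of the semigroup generated by $\supp(\mu)$ inside $\Lambda$) with $\nu_x$ the $H$-invariant probability on $Hx$; the orbit identification $\overline{\Lambda x} = Hx$ then follows from $H$-invariance and support considerations. For (b), since the homogeneous measure on $Hx$ is unique, every subsequential limit of $\frac1n \sum \mu^{*k}*\delta_x$ equals $\nu_x$, so the Ces\`aro sequence converges. For (c), invoke the Breiman--Furstenberg law of large numbers for the Markov chain $(g_k \cdots g_1 x)_{k \ge 1}$: uniqueness of the stationary measure on $\overline{\Lambda x}$, established by (a) and (b), upgrades Ces\`aro convergence of expected empirical distributions to almost-sure convergence of pathwise empirical distributions.

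The dominant obstacle is the second stage. The exponential-drift argument must match the Lyapunov decomposition of $\mathfrak g$ to horospherical subgroups of $G$, control the non-commutativity of drift in distinct unipotent directions simultaneously, and upgrade approximate $U$-invariance (produced by drift in the limit) to genuine invariance under a closed subgroup of $G$. It is precisely here that both hypotheses — semisimplicity of the Zariski closure of $\Ad(\Lambda)$, and the absence of compact factors — enter essentially and in tandem: the former provides the algebraic structure controlling the closed subgroups that can appear as stabilisers, while the latter guarantees the positivity of Lyapunov exponents needed to produce drift in the first place.
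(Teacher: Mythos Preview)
This theorem is not proved in the present paper at all: it is quoted verbatim from Benoist--Quint \cite{BQ3} in the introduction as background and motivation, and the paper then proceeds to prove its own, logically independent, results (Theorems~\ref{mainthm} and~\ref{mainthm2} and Corollary~\ref{mainthm3}) by entirely different methods (effective mixing \`a la Kleinbock--Margulis). So there is no ``paper's own proof'' of Theorem~\ref{BQT} against which to compare your proposal.

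That said, your outline is a fair high-level summary of the Benoist--Quint strategy: non-divergence via Eskin--Margulis, classification of stationary measures by the exponential drift argument, and then deduction of orbit closure, Ces\`aro convergence, and the pathwise law of large numbers. But it is only a sketch, not a proof. The second stage, which you correctly flag as the crux, occupies the bulk of \cite{BQ,BQ2,BQ3} and involves substantial machinery you have only named rather than supplied: the construction and fine properties of the limit measures on flag varieties, the precise formulation and proof of the exponential drift (which is considerably more delicate than ``push forward two nearby trajectories and read off an invariance''), the countability/rigidity of possible homogeneous stabilisers needed to pass from approximate to exact invariance, and the argument that rules out escape of mass to lower-dimensional homogeneous subspaces when passing from stationary-measure classification to equidistribution. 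None of this can be reconstructed from your paragraph, and the present paper makes no attempt to do so either.
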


I consider the case when the measure $\mu$ is supported on a specific sparse subset of a solvable subgroup and  show exponential convergence of $\mu^{*n}*\delta_x$  to the $G$-invariant probability measure on $G/\Gamma$. 
For the course of the note fix $k_1,k_2\in\bbn$. Let $k_0=k_1+k_2$, $k=k_1\cdot k_2$,  and $G=\Sl(k_0,\mathbb{R})$. Let $\Gamma$ be an arbitrary lattice in $G$, $X=G/\Gamma$, and
$$U:=\left\{\left[\begin{array}{cc}I_{k_1}& M\\0&I_{k_2}\end{array}\right]:M\in Mat_{k_1\times k_2}\right\}.$$
Let $\mathpzc{u}:\mathbb{R}^{k}
\rightarrow U$ be any isomorphism of groups.
For $n\in\bbn$ let 
$A_n$ be the group of diagonal subgroup of $\Sl(n,\bbr)$. 
 For $a=\diag(d_1,\dots,d_n)\in A_n$ and $i=1,\dots,n$, let $\pi_i(a)=d_i$. Throughout these notes for a Lie group $F$ let $\lambda_F$ denote a $F$-invariant  measure for $F$. If $\Delta$  is a lattice in $F$, then let $\lambda_{F/\Delta}$ denote the $F$-invariant probability measure on the homogeneous space $F/\Delta$.   If $E$ is a finite volume measurable subset of $F$ or $F/\Delta$ with respect to a $F$-invariant measure, then let $\lambda_E$ denote the normalized $F$-invariant measure restricted to $E$. 
 
\begin{definition} Let $\psi:[0,1]\ra\bbr^n$. We say that $\psi$ is totally non-planar if $f\in C^1$ and for every $v\in\bbr^n$,
$$\lambda_{[0,1]}\left(\left\{s:\psi'(s)\text{ exists and }\psi'(s)\cdot v=0\right\}\right)=0.$$
\end{definition}

% Totally non-planar functions are not difficult to produce. In fact,
 
For example, if $\psi:[0,1]\ra\bbr^n$ is analytic  and not contained in a proper affine subspace, then $\psi$ is totally non-planar. As a point of comparison, Kleinbock, Lindenstrauss, and Weiss \cite{KLW} define a measure $\nu$ on $\bbr^n$ to be non-planar if $\nu(\SL)$ for every affine hyperplane $\SL$ of $\bbr^n$. Our techniques require a stronger assumption on the support of $\mu_U$.

%(for thesis) Indeed, for every $v\in\bbr^k$, the function $\psi'(s)\cdot v$ is analytic and an analytic function which is $0$ on an infinite number of points of a compact interval is identically $0$. Hence, if  $v\in\bbr$ is  such that $\lambda_{[0,1]}\left(\left\{s:\psi'(s)\cdot v=0\right\}\right)>0$, then $\psi'(s)\cdot v=0$ for all $s\in[0,1]$, and $\psi$ is contained in a proper affine subspace. 

 \begin{definition} Let $\mu$ be a compactly supported measure on $A_{k_0}$, for $i=1,\dots, k_0$. For $i=1,\dots,k_0$ let  let $x_i$ be a random variable with law $\ln({\pi_i}_*\mu)$, and let
$$\al_i:=E x_i.$$
 We say that $\mu$ is asymptotically $U$-expanding if  for all $i\in\{1,\dots,k_1\}$ and $j\in\{i=k_1+1,\dots,k_0\}$, we have $\al_i-\al_j>0$. 

 \end{definition}
 
  %The term asymptotically $U$-expanding measure derives from the key property such measures  possess. 
  For an asymptotically $U$-expanding measure $\mu$, there exist a constant $C>1$ and $\eta>0$ such that for all $u\in U$, $$\mu^{*n}\left(\left\{a\in A_{k_0}:\f{\|aua^{-1}\|}{\|u\|}>C^n\right\}\right)>1-e^{-\eta n},$$
  where $\|\cdot\|$ is the standard Euclidian norm on $U\cong\bbr^{k}$.  % In crude terms,  random walks with respect to asymptotically $U$-expanding measures will exponentially expand $U$ with a fixed rate on all but an exponentially shrinking set. 
  See Corollary \ref{growth} for a more precise statement.

\begin{theorem}\label{mainthm} There exists $m\in\bbn$ depending only on $G$ such that the following holds. Let $\phi:[0,1]\rightarrow\mathbb{R}^{k}$ be a totally non-planar $C^m$ function.  Let $\mu_U=\mathpzc{u}_*\phi_*\lambda_{[0,1]}$. Let $\mu_A$ be a compactly supported asymptotically $U$-expanding probability 
measure on $A_{k_0}$, and let
 Let $\mu=\mu_A*\mu_U$.

Then there exists $\eta>0$ such that for any compact set $L\subseteq X$, and $f\in C^\infty_{c}(X)$, there exists  a constant $C:=C(f,L)$ such that for all $n\in\mathbb{N}$ and all $x\in L$,
\begin{equation}\label{eqn1}\left|\int f\,d(\mu^{*n}*\delta_x)-\int f\,d\lambda_{X}\right|<Ce^{-\eta n}.\end{equation}
\end{theorem}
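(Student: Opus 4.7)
Fix $f\in C^\infty_c(X)$ and a compact $L\subseteq X$, and set $\Psi_n(x):=\int f\,d(\mu^{*n}*\delta_x)$. Writing $g_i=a_i u_i$ with $a_i\sim\mu_A$, $u_i=\mathpzc{u}(\phi(t_i))$, and $t_i$ uniform on $[0,1]$, all independent, we have $\Psi_n(x)=\mathbb{E}[f(a_n u_n\cdots a_1 u_1\,x)]$. The plan is to (i) rearrange this product so every diagonal factor appears on the right, (ii) use the asymptotic $U$-expansion of $\mu_A$ to ensure the effective unipotent factor lies on a highly stretched, totally non-planar curve with overwhelming probability, and (iii) invoke an effective equidistribution estimate for such curves on $X$, which will be the main technical obstacle.

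\emph{Rearrangement and large deviations.} Since $U$ is abelian and normalized by $A_{k_0}$, one may commute each $a_i$ rightward past later $u_j$'s at the cost of conjugation and then combine all unipotent factors in $U$. A short induction on $n$ yields
$$a_n u_n\cdots a_1 u_1=\mathpzc{u}\!\left(\sum_{i=1}^{n}(a_n\cdots a_i)\cdot\phi(t_i)\right)A_n,\qquad A_n:=a_n\cdots a_1,$$
where ``$\cdot$'' denotes the conjugation action of $A_{k_0}$ on $\bbr^k$ transported through $\mathpzc{u}$. Conditioning on $\mathbf a=(a_1,\dots,a_n)$ and invoking Corollary~\ref{growth} (the quantitative version of asymptotic $U$-expansion, proved via a Cram\'er-type large deviation on the $k$ weights $\alpha_i-\alpha_j$) yields constants $C>1$ and $\eta_1>0$ such that, outside a set of $\mu_A^{*n}$-probability at most $e^{-\eta_1 n}$, every eigenvalue of the conjugation action of $A_n$ on $\bbr^k$ exceeds $C^n$. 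Call such $\mathbf a$ \emph{good}; the bad $\mathbf a$ contribute at most $e^{-\eta_1 n}\|f\|_\infty$ to $|\Psi_n(x)-\int f\,d\lambda_X|$.

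\emph{Effective equidistribution of stretched curves (main obstacle).} For good $\mathbf a$ and frozen $t_2,\dots,t_n$, put $w:=\sum_{i\ge 2}(a_n\cdots a_i)\cdot\phi(t_i)$ and let $T$ denote conjugation by $A_n$ on $\bbr^k$. The inner integral in $t_1$ becomes
$$J=\int_0^1 f\!\left(\mathpzc{u}\!\bigl(T\phi(t_1)+w\bigr) A_n x\right)dt_1.$$
The parametrization $t_1\mapsto T\phi(t_1)+w$ is $C^m$, inherits total non-planarity from $\phi$ (since $T$ is invertible and translation is harmless), and is stretched in every direction by a factor $\ge C^n$. The crucial lemma I would prove is that, for some $m\in\bbn$ and $\alpha>0$ depending only on $G$, every totally non-planar $\psi\in C^m([0,1],\bbr^k)$ stretched by at least $\kappa\ge 1$ and every $y\in X$ satisfy
$$\left|\int_0^1 f\!\left(\mathpzc{u}(\psi(s))\,y\right)ds-\int f\,d\lambda_X\right|\le C(f)\,\kappa^{-\alpha}\,\Omega(y),$$
with $\Omega(y)$ growing at most polynomially in the height of $y$. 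The expected proof decomposes $f-\int f\,d\lambda_X$ against the unitary $G$-representation on $L^2_0(X)$ and, for each isotypic block, combines spectral gap / Howe--Moore decay of matrix coefficients with an $m$-fold integration by parts driven by non-planarity; the threshold $m$ is dictated by the Sobolev exponent required across the unitary dual of $G$, hence depends only on $G$. The height factor $\Omega(A_n x)$ is absorbed via an Eskin--Margulis-type nondivergence estimate for the diagonal walk, using the compact support of $\mu_A$.

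\emph{Assembly.} Applying the lemma with $\kappa\ge C^n$ gives $|J-\int f\,d\lambda_X|\le C(f,L)e^{-\alpha n\log C}$ uniformly in $t_2,\dots,t_n$ and in good $\mathbf a$. Averaging over good $\mathbf a$ and the remaining $t_i$ preserves this bound, and adding the $e^{-\eta_1 n}\|f\|_\infty$ contribution from bad $\mathbf a$ yields $|\Psi_n(x)-\int f\,d\lambda_X|\le C(f,L)e^{-\eta n}$ with $\eta=\tfrac12\min(\eta_1,\alpha\log C)$, uniformly in $x\in L$. The whole difficulty resides in the equidistribution lemma, which is the only place where total non-planarity, the $C^m$-hypothesis, and the spectral theory of $G$ all enter simultaneously.
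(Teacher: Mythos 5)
Your rearrangement of the word $a_nu_n\cdots a_1u_1$ into a diagonal part times a single element of $U$, and your use of Corollary~\ref{growth} to discard an exponentially small set of bad diagonal trajectories, are exactly the paper's first steps. The gap is in your ``crucial lemma.'' After freezing $t_2,\dots,t_n$ you reduce to the integral $\int_0^1 f(\mathpzc{u}(T\phi(t_1)+w)A_nx)\,dt_1=\int_0^1 f\bigl(A_n\,\mathpzc{u}(\phi(t_1)+T^{-1}w)x\bigr)dt_1$, i.e.\ to \emph{effective equidistribution of expanding translates of a single one-dimensional curve} inside the $k$-dimensional horosphere $U$. That statement does not follow from spectral gap, Howe--Moore decay, and ``integration by parts driven by non-planarity'': decay of matrix coefficients controls smooth averages over the full horospherical leaf, not averages over a measure supported on a one-dimensional submanifold of it, and non-planarity yields oscillatory cancellation only against abelian characters, not against general automorphic test functions. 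Equidistribution of such curve translates is a genuinely deep problem (Shah-type theorems, proved non-effectively via Ratner theory, with effective versions only in recent and substantially harder work), so the ``main technical obstacle'' you flag is not a lemma one can supply by the indicated means; your proposed absorption of the base-point height via an Eskin--Margulis recurrence estimate also fails here, since that recurrence requires a semisimple Zariski closure while your auxiliary walk $A_n=a_n\cdots a_1$ lives in the abelian diagonal group and typically escapes to the cusp at linear speed.

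The paper's route avoids this entirely, and the missing idea is precisely where total non-planarity is actually used: instead of freezing all but one curve parameter, group the walk into blocks of $k$ consecutive steps. By Lemma~\ref{span}, for a.e.\ $(x_1,\dots,x_k)$ the vectors $a_1\phi'(x_1),\dots,a_k\phi'(x_k)$ span $\bbr^k$, so the sum $\sum_{i=1}^k a_i\phi(x_i)$ of $k$ \emph{independent} curve points pushes Lebesgue measure on $[0,1]^k$ to a measure on $U\cong\bbr^k$ that is absolutely continuous off a set of small mass, with Radon--Nikodym derivative of uniformly bounded $C^m$ norm (Lemma~\ref{bound}, Corollary~\ref{decomp}). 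Iterating over the $n$ blocks and keeping track of the diagonal conjugations gives Lemma~\ref{main}: up to an exponentially small remainder, the effective unipotent part of the walk is a full-dimensional $C^m$-bounded density on $U$. At that point the effective statement needed is only the Kleinbock--Margulis exponential mixing for translates of the \emph{entire} horosphere with $C^m$ test densities (Theorem~\ref{mix}), which is quoted, not reproved. So the convolution/absolute-continuity step is not an optional refinement but the crux replacing your unproved curve-equidistribution lemma; without it the argument does not close.
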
 The choice of $m\in\bbn$ in this theorem arrises from the mixing results of \cite{KM}. Indeed, we employ exponential mixing as the primary tool in the paper and its application requires sufficiently many derivatives of a function on the expanding leaf. See section \ref{sec4} for a more detailed explanation. Furthermore,
this theorem implies the following theorem analogous to Theorem \ref{BQT}  part (c) proved by Benoist and Quint.
\begin{corollary}\label{mainthm3}Let $G,\Gamma, \mu$ be as in Theorem \ref{mainthm}. Let $g_1,g_2,\dots $ be i.i.d. random variables on $G$ with law $\mu$. Then for any $z\in X$ and almost every $(g_1,g_2,\dots)$ in $G^\bbn$ with respect to $\mu^{\otimes\bbn}$
$$\f1n\sum_{k=0}^{n-1}\delta_{g_k\dots g_1x}\lra \lambda_X.$$
\end{corollary}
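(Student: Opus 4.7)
The plan is to deduce almost sure convergence of the empirical measures from the exponential in-mean convergence provided by Theorem \ref{mainthm}, via a variance estimate plus a Borel--Cantelli argument. By density, it suffices to prove that for every $f\in C^\infty_c(X)$, almost surely
$$S_n := \f{1}{n}\sum_{k=0}^{n-1} f(X_k) \lra \int f\, d\lambda_X,$$
where $X_k := g_k\cdots g_1 z$, and then to pass to a countable dense subset of $C_c(X)$ via a standard diagonalization.

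Set $\bar f := f - \int f\,d\lambda_X$. Since the $g_i$ are i.i.d., $(X_k)_{k\ge 0}$ is a Markov chain on $X$ with transition measure $\mu$, so for $k\le\ell$ conditioning on $\SF_k := \s(g_1,\dots,g_k)$ gives
$$E\bigl[\bar f(X_k)\bar f(X_\ell)\bigr] = E\bigl[\bar f(X_k)\, h_{\ell-k}(X_k)\bigr], \qquad h_m(y) := \int \bar f\, d(\mu^{*m}*\delta_y).$$
Theorem \ref{mainthm} supplies the pointwise decay $|h_m(y)| \le C(f,L)e^{-\eta m}$ uniformly for $y$ in any compact $L\sbq X$, while its weak convergence statement for the probability measures $\mu^{*k}*\delta_z$ to the probability measure $\lambda_X$ forces the family $\{\mu^{*k}*\delta_z\}_{k\ge 0}$ to be tight: for every $\e>0$ one can pick a single compact $L_\e\sbq X$ with $P(X_k\notin L_\e)<\e$ for every $k$. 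Splitting the expectation on $\{X_k\in L_\e\}$ and its complement then yields
$$|\mathrm{Cov}(f(X_k),f(X_\ell))| \le \|f\|_\infty\bigl(C(f,L_\e)e^{-\eta|\ell-k|} + 4\|f\|^2_\infty\,\e\bigr),$$
which already implies $\mathrm{Var}(S_n)\to 0$, and hence $S_n\to 0$ in probability.

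The main obstacle is strengthening this to a polynomial rate $\mathrm{Var}(S_n)\le Cn^{-\de}$ for some $\de>0$. I plan to achieve this by coupling the choice of compact set to the time gap: pick $L_m$ so that $P(X_k\notin L_m)\le e^{-\eta m/2}$ uniformly in $k$, and control the growth of the constant $C(f,L_m)$ in Theorem \ref{mainthm} as the sets $L_m$ exhaust $X$. Both ingredients should be recoverable from the quantitative nondivergence and the effective mixing input of \cite{KM} that drives Theorem \ref{mainthm} in Section \ref{sec4}; the delicate point is obtaining the nondivergence estimates uniformly in the starting point of the walk.

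Once a polynomial variance bound is in hand, Chebyshev together with Borel--Cantelli applied along a sufficiently sparse subsequence $n_j$ yields $S_{n_j}\to 0$ almost surely; the uniform bound $|S_n|\le \|f\|_\infty$ together with $(n_{j+1}-n_j)/n_{j+1}\to 0$ permits monotone interpolation between consecutive $n_j$'s, promoting the convergence to the full sequence. Intersecting the resulting full-measure events over a countable dense family in $C_c(X)$ produces a single $\mu^{\otimes\bbn}$-full-measure set on which $\f1n\sum_{k=0}^{n-1}\delta_{X_k}$ converges weakly to $\lambda_X$, proving the corollary.
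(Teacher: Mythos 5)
Your variance computation is essentially the argument the paper itself uses: the paper deduces Corollary \ref{mainthm3} from a general lemma whose proof conditions on the past exactly as you do, applies the equidistribution statement both to $f$ and to a cutoff $\psi\in C_c(X)$ equal to $1$ on a large compact set $L$ (this is your tightness observation), and arrives at the same estimate, namely a second moment for $S_n$ of size $O(N/n)+O(\e)$, hence tending to $0$. Up to the conclusion that $\mathrm{Var}(S_n)\to 0$ and $S_n\to 0$ in probability, you and the paper coincide.

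The gap is in your upgrade to almost-sure convergence. You need $\mathrm{Var}(S_n)\le Cn^{-\de}$, and for that you posit (i) compact sets $L_m$ with $P(X_k\notin L_m)\le e^{-\eta m/2}$ uniformly in $k$, and (ii) control of the growth of $C(f,L_m)$ in Theorem \ref{mainthm} as $L_m$ exhausts $X$. Neither is available from what is proved. A recurrence bound such as (i), with exponential tails uniform in time, is an Eskin--Margulis type statement; their theorem requires $H_\mu$ semisimple and does not cover the solvable measures considered here, and the paper proves no quantitative nondivergence for this walk. What Theorem \ref{mainthm} itself yields is only $P(X_k\notin L)\le \lambda_X(L^c)+C(\psi,z)e^{-\eta k}$, and making $\lambda_X(L^c)$ exponentially small pushes $L$ into the cusp, at which point (ii) becomes the issue: the constant in Theorem \ref{mainthm} (via Theorem \ref{mix} and \cite{KM}) degrades with the injectivity radius along $L$, and no rate for this degradation is recorded anywhere in the paper, so the balancing of that blow-up against the exponential gains is precisely the ``delicate point'' you acknowledge and leave unproven. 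It is worth noting that the paper's own written proof does not perform this upgrade either: it stops at the second-moment bound and invokes Chebyshev, which as stated delivers convergence in probability of the averages. So you have correctly isolated the subtle step, but your proposal does not close it; closing it requires genuinely new quantitative input (uniform recurrence plus explicit dependence of the mixing constant on the compact set), or a different mechanism altogether (e.g.\ a martingale or strong-law decomposition), rather than a routine re-use of \cite{KM}.
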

%Indeed, this is a corollary to the more general lemma:
%\begin{lemma} Let $(Y,\SA)$ be a a measurable space and suppose $\mu_1,\mu_2,\dots$ and $\mu$ are probability  measures on $Y$ such that $\mu_i\ra\mu$ in the weak-$*$ topology.  If $X_1,X_2,\dots$ are independent random variables such that the law of $X_i$ is $\mu_i$, then
%almost surely
%$$\f1n\sum_{k=0}^{n-1}\delta_{X_i}\lra\mu.$$
%\end{lemma}
%\begin{proof} Let $f(x)\in C(Y)$ be given. By considering $f-\int f\,d\mu$, we may  assume that $\int f\,d\mu=0$. Let $\e>0$, and choose $k$ such that for $n\geq k$, we have that $\left|\int f\,d\mu_n\right|<\e$. Let $\nu=\mu_1\x\mu_2\x\dots$ be the product measure on $Y^\bbn$ such that the projection onto the $i$th coordinate is $\mu_i$. Then
%\begin{eqnarray*}
%\int\left(\f1n\sum_{i=1}^nf(X_i)\right)^2\,d\nu&=&\f1{n^2}\int\sum_{i=k}^n\left(f(X_i)\right)^2\,d\nu+\f1{n^2}\sum_{i,j< k}\int f(X_i)f(X_j)\, d\nu\\&&
%+\f2{n^2}\sum_{j\geq k, i<j}\int f(X_i)f(X_j)\, d\nu\\
%&\leq&\f1n\|f\|_\infty+\f{k^2}{n^2}\|f\|_\infty+\f{2}{n^2}\sum_{j\geq k, i<j}\int f\,d\mu_i\int f\,d\mu_j\\
%&\leq&\f1n\|f\|_\infty+\f{k^2}{n^2}\|f\|_\infty+\f{2}{n^2}\e\|f\|_\infty.
%\end{eqnarray*}
%As $\e>0$ was arbitrary, $\int\left(\f1n\sum_{i=1}^nf(X_i)\right)^2\ra0$ as $n\ra\infty$. Let $\delta>0$. By Chebyshev's inequality for every $n\in\bbn$,
%\begin{equation*}
%P\left(\f1n\left|\sum_{i=1}^nf(X_i)\right|>\de\right)\leq\f{1}{\de^2}\int\left(\f1n\sum_{i=1}^nf(X_i)\right)^2\,d\nu.
%\end{equation*}
%Sind the right hand side goes to 0 for every $\de>0$, the claim follows.
%\end{proof}
For $l\in\bbn$, let 
$$A_{l,\epsilon}:=\left\{\diag(e^{t_1},\dots,e^{t_{l}})\in A_{l}:|t_i|<\epsilon \,\forall\,i\in\{1,\dots,l\}\right\}.$$
Theorem \ref{mainthm} requires $\mu_U$ to be the parameter measure of a curve, but more general measures are possible. By making stronger assumptions on the measure $\mu_A$, one can have significantly greater flexibility choosing $\mu_U$.
\begin{theorem}\label{mainthm2} There exists $m\in\bbn$ depending only on $G$ such that the following holds. Let $\phi:[0,1]\rightarrow\mathbb{R}^{k}$ be a piecewise $C^m$ function and there exist points $x_1,\dots,x_{k_0}\in[0,1]$  such that $\mathbb{R}$-span$\{\phi'(x_1),\dots,\phi'(x_{k}) \}=\mathbb{R}^{k}$. Let $0<c<1$ and $\mu_R$ be any compactly supported probability measure on $\bbr^k$. Let $\mu_U=\mathpzc{u}_*(c\mu_R+(1-c)\phi_*\lambda_{[0,1]})$. Then there exists $\e>0$ such that if 
$\mu_A$ is asymptotically $U$-expanding probability measure supported on $A_{k_0,\epsilon}$ and $\mu=\mu_A*\mu_U$,  then there exists $\eta>0$ such that for any compact set $L\subseteq G/\Gamma$, and $f\in C^\infty_{\text{comp}}(G/\Gamma)$, there exists  a constant $C:=C(f,L)$ such that for all $n\in\mathbb{N}$ and all $x\in L$,
\begin{equation}\label{eqn2}\left|\int f\,d(\mu^{*n}*\delta_x)-\int f\,d\lambda_{X}\right|<Ce^{-\eta n}.\end{equation}
\end{theorem}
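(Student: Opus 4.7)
The strategy is to reduce Theorem~\ref{mainthm2} to Theorem~\ref{mainthm} by establishing, for $N=k$, that the block measure $\mu^{*N}$ dominates a convolution of the form $\delta\cdot\mu_A^{*N}*\mathpzc{u}_*\sigma$, where $\sigma$ is an absolutely continuous probability measure on $\bbr^k$ with $C^{m-1}$ density bounded below on a fixed open set. The three new ingredients play distinct roles: the positive weight $1-c$ on $\mathpzc{u}_*\phi_*\lambda_{[0,1]}$ lets us extract a pure $\phi$-convolution after binomial expansion; the spanning-derivatives condition makes the $k$-fold self-convolution of $\mathpzc{u}_*\phi_*\lambda_{[0,1]}$ have an absolutely continuous piece; and the smallness of $\e$ in $\supp\mu_A\subseteq A_{k_0,\e}$ keeps the $\Ad$-action of the intervening $\mu_A$ factors close to the identity, so this smooth piece survives reorganization of the block.

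\textbf{Step 1: Smooth component of the pure $\phi$-convolution.} Expanding $\mu_U^{*N}=(c\mu_R+(1-c)\mathpzc{u}_*\phi_*\lambda_{[0,1]})^{*N}$ binomially, the summand $(1-c)^N(\mathpzc{u}_*\phi_*\lambda_{[0,1]})^{*N}$ appears with positive weight. Since $U$ is abelian, identifying $U\cong\bbr^k$ via $\mathpzc{u}$, this summand is the pushforward of $\lambda_{[0,1]^k}$ under the sum map
\[
\Psi(t_1,\ldots,t_k)=\phi(t_1)+\cdots+\phi(t_k).
\]
By the spanning hypothesis, $d\Psi|_{(x_1,\ldots,x_k)}$ has columns $\phi'(x_1),\ldots,\phi'(x_k)$ spanning $\bbr^k$, so $\Psi$ is a local diffeomorphism near $(x_1,\ldots,x_k)$; the pushforward is therefore absolutely continuous on an open set $V\subset\bbr^k$ with $C^{m-1}$ density bounded below on a compactly contained open subset $V''\subset V$.

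\textbf{Step 2: Commuting the $\mu_A$ factors.} Since $A_{k_0}$ normalizes $U$, a sample $a_1 u_1\cdots a_N u_N$ from $\mu^{*N}$ rewrites as $a\cdot v_1\cdots v_N$ with $a=a_1\cdots a_N$ and $v_i=\Ad((a_{i+1}\cdots a_N)^{-1})(u_i)\in U$. Each conjugating element $b_i:=(a_{i+1}\cdots a_N)^{-1}$ lies in $A_{k_0,N\e}$, on which $\Ad(b_i)$ acts on $\bbr^k\cong U$ as a diagonal matrix within $e^{\pm 2N\e}$ of the identity. Conditioning on all $u_i$ being drawn from the $\phi$-summand (probability $(1-c)^N>0$), the conditional distribution of $v_1+\cdots+v_N$ given $(a_1,\ldots,a_N)$ is the pushforward of $\lambda_{[0,1]^k}$ under the perturbed sum map
\[
\Psi_{\mathbf{b}}(t_1,\ldots,t_k)=\Ad(b_1)\phi(t_1)+\cdots+\Ad(b_k)\phi(t_k).
\]
Its differential at $(x_1,\ldots,x_k)$ is a bounded perturbation of the invertible matrix from Step~1, so choosing $\e$ small enough in terms of $k$ and the smallest singular value of $d\Psi|_{(x_1,\ldots,x_k)}$ keeps $\Psi_{\mathbf{b}}$ a local diffeomorphism uniformly over $\mathbf{b}\in A_{k_0,N\e}^k$. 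Integrating over $(a_1,\ldots,a_N)$ produces an absolutely continuous component $\sigma$ in the $U$-marginal of $\mu^{*N}$, supported in a fixed open set $V'$ with $C^{m-1}$ density bounded below; hence $\mu^{*N}\geq\delta\cdot\mu_A^{*N}*\mathpzc{u}_*\sigma$ for some $\delta>0$.

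\textbf{Step 3: Mixing and main obstacle.} The dominated measure $\mu_A^{*N}*\mathpzc{u}_*\sigma$ is of the form $\mu_A'*\mu_U'$ appearing in Theorem~\ref{mainthm}, except that $\mu_U'=\mathpzc{u}_*\sigma$ is absolutely continuous with smooth density on an open set rather than concentrated on a curve. The total-non-planarity hypothesis of Theorem~\ref{mainthm} is invoked only to furnish the oscillatory integral estimates at the Kleinbock--Margulis mixing step; having a $C^{m-1}$ density on an open set supplies the same (and stronger) estimates directly by integration by parts, so the proof of Theorem~\ref{mainthm} adapts verbatim to yield exponential decay for $\mu^{*nN}*\delta_x$. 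Then \eqref{eqn2} follows by dividing the resulting rate by $N$ and bounding the $(n\bmod N)$ leftover steps trivially by $\|f\|_\infty$. The principal difficulty is Step~2: maintaining uniform non-degeneracy of $d\Psi_{\mathbf{b}}$ across all $\mathbf{b}\in A_{k_0,N\e}^k$, which is what forces $\e$ to be small relative to the singular values of $d\Psi|_{(x_1,\ldots,x_k)}$ and the $C^1$-norm of $\phi$ near its spanning points.
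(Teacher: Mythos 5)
Your Steps 1--2 are essentially the paper's own localization: the paper also works near the point $(x_1,\dots,x_k)$, uses the locality of Lemmas \ref{neigh} and \ref{bound} (Remark \ref{strong}) to produce, for each block of $k$ steps, an absolutely continuous component with $C^m$-bounded density, and needs $\e$ small so that the conjugation matrices $C_a$ stay in a neighborhood of the identity where the matrix $[C_{a}\phi'(x_1),\dots]$ stays nondegenerate (for general diagonal scalings the spanning condition can fail, so this localization is genuinely needed). The problem is Step 3. A one-shot minorization $\mu^{*N}\geq\delta\,\mu_A^{*N}*\mathpzc{u}_*\sigma$ does not reduce the theorem to Theorem \ref{mainthm}: writing $\mu^{*N}=\delta\mu'+(1-\delta)\rho$, the measure $\mu^{*nN}$ is a sum over all interleavings of $\mu'$- and $\rho$-blocks, and the $\rho$-blocks are not negligible --- their total weight is close to $1$ since $\delta\leq (1-c)^k\lambda_{[0,1]^k}(I_r(\mathbf{x}))$ cannot be made large. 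Nor is this a Doeblin situation: the minorizing measure $\mu'*\delta_x$ depends on $x$ and the space is noncompact, and the equidistribution of $(\mu')^{*n}*\delta_x$ alone says nothing about words in which $\rho$-blocks occur. What is actually needed is the telescoping decomposition of Lemma \ref{main} (display (\ref{stru})): condition on the position $i$ of the first ``good'' block; its smooth density is then pushed forward by the diagonal conjugation $\theta_{k(i-1)}$, whose $C^m$-operator norm grows like $(e^{2\e})^{(km+k^2)(i-1)}$, while the probability of $i-1$ preceding bad blocks decays only like $(1-\delta)^{i-1}$. The sum (\ref{CMbound}) converges, and the residual mass decays exponentially, only if the per-block expansion times the per-block bad mass is $<1$. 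In Theorem \ref{mainthm} this is free because Corollary \ref{decomp} lets the bad mass be arbitrarily small for an arbitrary compactly supported $\mu_A$; here the bad mass is bounded away from $0$, and this trade-off is the second, decisive reason $\e$ must be small --- the paper's condition of the type $(e^{2\e})^{k^2+km}\delta<1$. Your proposal locates the difficulty entirely in the uniform nondegeneracy of $d\Psi_{\mathbf b}$ and declares the rest ``verbatim,'' so this bookkeeping, which is the heart of Section 7, is missing.

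Two smaller points. First, total non-planarity in Theorem \ref{mainthm} is not used ``only to furnish oscillatory integral estimates at the Kleinbock--Margulis step'': Theorem \ref{mix} takes as input nothing more than a $C^m$ density with bounded norm and bounded $L^1$ mass; non-planarity is used earlier (Lemma \ref{span}, Corollary \ref{decomp}) to make the block measures absolutely continuous with uniformly $C^m$-bounded densities over the whole compact set of conjugations --- exactly the part your Step 2 replaces by the local spanning hypothesis, so for the dominated piece you in fact need only the $C^m$ upper bound on $\sigma$'s density, not the lower bound. Second, the leftover $r=n\bmod N$ steps cannot be ``bounded trivially by $\|f\|_\infty$'' if you want exponential decay in $n$; instead apply the block estimate to $f_r(y)=\int f(gy)\,d\mu^{*r}(g)$, whose norms are uniformly controlled for $r<N$ and whose $\lambda_X$-integral equals that of $f$ by $G$-invariance. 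This is minor (the paper itself only treats multiples of $k$ explicitly), but as written the sentence is not a proof of (\ref{eqn2}) for general $n$.
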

As a final point of context, let us compare the results of this paper to the recent results of A. Eskin and E. Lindenstrauss. In a presentation at the MSRI \cite{EL}, Eskin considered a Lie group $G$ a lattice $\Gamma$ and defined a measure $\mu$ to be {\em uniformly expanding measure} if there exist a constant $C>0$ and an integer $n$ such that for all $v\in \mathfrak{g}$ 
\begin{equation}\label{unifex1}\int_{G}\f{\log(\|\Ad(g)v\|)}{\|v\|}d \mu^{*n}>C.\end{equation}
Equivalently, $\mu$ is uniformly expanding  if and only if  for every $v\in\mathfrak{g}$ and a.e. infinite word ${\bf g}=(g_1,g_2,\dots)$ with respect to $\mu^\bbn$ we have
\begin{equation}\label{unifex2}\lim_{n\ra\infty} \f{1}{n}\log\|\Ad(g_n\dots g_1)v\|>0.\end{equation}
With this set up they show:
\begin{theorem}[\cite{EL}]
Suppose $\mu$ is a compactly supported uniformly expanding measure. Then any $\mu$-stationary measure on $G/\Gamma$,   is invariant under the group generated by the support of $\mu$. 
\end{theorem}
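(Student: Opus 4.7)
The plan is to adapt the exponential drift technique pioneered by Benoist--Quint and refined by Eskin--Mirzakhani, now exploiting the strong hypothesis that every direction in $\mathfrak{g}$ grows under $\Ad$. Set up the skew-product $T\colon(x,\mathbf{g})\mapsto(g_1 x,\sigma\mathbf{g})$ on $X\times \Omega$ with $\Omega=G^{\bbn}$ and $\bbp=\mu^{\otimes\bbn}$, so that a $\mu$-stationary $\nu$ lifts to the $T$-invariant probability measure $\tilde\nu=\nu\otimes\bbp$. Writing $H$ for the closed group generated by $\supp(\mu)$, the goal is $H$-invariance of $\nu$.

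First I would package uniform expansion via Oseledets applied to the cocycle $(n,\mathbf{g})\mapsto \Ad(g_n\cdots g_1)$: condition (1.4) forces every Lyapunov exponent on $\mathfrak{g}$ to be strictly positive along $\bbp$-a.e.\ trajectory, with a uniform bottom exponent $\chi_{\min}>0$. Consequently, for any small $v\in\mathfrak{g}$ and nearby points $x,\,y=\exp(v)x$, the shifted translates $(g_n\cdots g_1)y=\exp(\Ad(g_n\cdots g_1)v)(g_n\cdots g_1)x$ separate exponentially, and the norm $\|\Ad(g_n\cdots g_1)v\|$ reaches a fixed scale at a stopping time $N\approx \log(1/\|v\|)/\chi_{\min}$.

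Now suppose for contradiction that $\nu$ is not $H$-invariant and let $\Lambda$ be its proper closed stabiliser. Pick two $\tilde\nu$-generic pairs $(x,\mathbf{g})$ and $(\exp(v)x,\mathbf{g})$ with $v$ transverse to $\mathrm{Lie}(\Lambda)$, and run both under the same word up to time $N$. Following the exponential drift scheme, a Kakutani--Rokhlin style coupling --- using Luzin regularity of the disintegration of $\tilde\nu$ along expanding foliations --- shows that the two stopped points remain jointly $\tilde\nu$-generic while differing by $w_{\mathbf{g}}=\Ad(g_N\cdots g_1)v$ of definite size. Averaging over $\mathbf{g}$ yields translation invariance of $\nu$ in the direction distribution of $w_{\mathbf{g}}/\|w_{\mathbf{g}}\|$, populating a subspace not contained in $\mathrm{Lie}(\Lambda)$; iterating the drift and closing under the group operation contradicts the maximality of $\Lambda$, forcing $\Lambda\supseteq H$.

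The principal obstacle is the coupling in the drift step. In the Benoist--Quint setting one uses stable directions to recontract separated trajectories for comparison, but here \emph{all} exponents are positive, so the coupling must instead be implemented by reversing time (the past cocycle has negative exponents on the same directions) together with an entropy bound of Ledrappier--Young type. One must also verify that the distribution of drift directions $w_{\mathbf{g}}/\|w_{\mathbf{g}}\|$ does not collapse into $\mathrm{Lie}(\Lambda)$; here the uniformity in (1.3) is decisive, since it rules out the degeneracies in the Furstenberg boundary measures that would otherwise trap the drift inside the pre-existing stabiliser.
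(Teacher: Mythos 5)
You should first be aware that the paper contains no proof of this statement: it is quoted from the MSRI presentation of Eskin and Lindenstrauss \cite{EL} purely for context, and the author explicitly notes that his own results neither follow from it nor use it. So there is no internal argument to compare yours with; your proposal has to be judged on its own terms, and as it stands it has a genuine error at its foundation.

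The error is the claim that uniform expansion ``forces every Lyapunov exponent on $\mathfrak{g}$ to be strictly positive \ldots with a uniform bottom exponent $\chi_{\min}>0$.'' This is impossible: for $G=\Sl(k_0,\bbr)$ one has $\det \Ad(g)=1$, so the Lyapunov exponents of the cocycle $\Ad(g_n\cdots g_1)$ on $\mathfrak{g}$ sum to zero and cannot all be positive. Uniform expansion (conditions (1.3)--(1.4) of the paper) only asserts that each \emph{fixed} vector $v$ grows almost surely; this is compatible with a large negative part of the spectrum because the Oseledets subspaces of non-positive growth are random and avoid any deterministic $v$. Indeed that is exactly the situation for the solvable measures $\mu=\mu_A*\mu_U$ of this paper, where $\Ad(au)$ contracts many directions yet Propositions 5.5--5.7 show uniform expansion. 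Consequently the whole architecture of your drift step --- ``all exponents are positive, so couple by reversing time'' --- starts from a false premise, and the stopping time $N\approx\log(1/\|v\|)/\chi_{\min}$ is not defined. Beyond this, the two genuinely hard points of the Eskin--Lindenstrauss theorem are left as black boxes: the coupling of nearby generic pairs in the absence of a contracting foliation (your appeal to a ``Kakutani--Rokhlin style coupling'' plus ``an entropy bound of Ledrappier--Young type'' is a placeholder, not an argument), and the passage from exponential drift to \emph{extra invariance} of $\nu$ in the direction $w_{\bf g}/\|w_{\bf g}\|$, which requires the factorization/additional-invariance machinery of Eskin--Mirzakhani together with control of the conditional measures of $\nu$ along the drift directions; nothing in (1.3) by itself prevents those conditionals from being atomic or from concentrating along $\mathrm{Lie}(\Lambda)$. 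As written, the proposal is a plausible road map for the cited theorem but not a proof, and its quantitative backbone (positivity of the bottom exponent) is wrong.
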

 For example, 
if $\Lambda$ is the support of  a measure $\mu$ on $G$ and $\overline{\Ad\Lambda}^Z$ is simisimple with no center and no compact factors, then  $\mu$ is uniformly expanding. Thus the measures considered by Benoist and Quant are uniformly expanding. 
Furthermore, the measures I consider are also uniformly expanding (See proposition \ref{ue}).

However, my results do not follow directly from theirs. Classification of stationary measures is an inherently weaker problem than proving that the measures $\mu^{*n}*\delta_x$ converge to the $G$-invariant probability  measure on $X$. To prove the latter question from the results of Eskin and Lindenstrauss, one must additionally show that  a limit probability measure exists and that invariance under the support of $\mu$ implies  $G$-invariance. Even after this though, one can only show weak-$*$ convergence of $\mu^{*n}*\delta_x$ to the  $G$-invariant probability measure. We further show that this weak-$*$ convergence occurs exponentially.  

{\bf Acknowledgements:} Thank you to the many people who helped me on this project including  Nimish Shah, Barak Weiss, Yves Benoist, Jean-Fran{\c{c}}ois Quint, and Ronggang Shi. Furthermore, let me especially thank Ilya Khayutin for our numerous constructive conversations. Finally, let me thank the MSRI for providing a fabulous work space.
\section{Push forward of Lebesgue measure by sums of totally non-planar functions}\label{non-planar}
In this section, we develop techniques to produce absolutely continuous measures with respect to Lebesgue measure on $\bbr^k$ by convolving parameter measures on $k$ totally non-planar curves. In general, the production of an absolutely continuous measure follows from the inverse function theorem and properties of totally non-planar functions (a sum of $k$ totally non-planar functions with independent variables is invertible almost everywhere in its image). However, we take special care in this section to control the $C^m$ norms of the Radon-Nikodym  derivative of the measure. 
\begin{lemma}\label{span} Let $\psi_1,\dots,\psi_k:[0,1]\ra\bbr^k$ be  totally non-planar functions.  Then 
$$\lambda_{[0,1]^k}\left(\left\{x\in[0,1]^k: \det\left[\psi'_1(x_1),\dots,\psi_k'(x_k)\right]=0\right\}\right)=0.$$
\end{lemma}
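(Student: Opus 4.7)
The plan is to prove a strengthened statement by induction on the number of curves, then specialize to $m=k$. Specifically, I will show:

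\begin{quote}
\emph{Claim.} For $m \leq k$ and totally non-planar $\psi_1,\dots,\psi_m:[0,1]\to\bbr^k$,
the set
$$D_m := \left\{(x_1,\dots,x_m)\in[0,1]^m : \psi'_1(x_1),\dots,\psi'_m(x_m) \text{ are linearly dependent in }\bbr^k\right\}$$
has Lebesgue measure zero in $[0,1]^m$.
\end{quote}

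Taking $m=k$ recovers the lemma, since linear dependence of $k$ vectors in $\bbr^k$ is equivalent to the vanishing of their determinant.

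For the base case $m=1$, a vector $\psi'_1(x_1)\in\bbr^k$ is ``linearly dependent'' only if it is zero, which means $\psi'_1(x_1)\cdot e_i=0$ for every standard basis vector $e_i$. By the totally non-planar hypothesis applied to each $e_i$, each of these sets has measure zero, hence so does their intersection.

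For the inductive step, assume the claim for $m-1$ (with $m \leq k$), and apply Fubini's theorem to write
$$\lambda_{[0,1]^m}(D_m) = \int_{[0,1]^{m-1}} \lambda_{[0,1]}\left(\{x_m\in[0,1] : (x_1,\dots,x_m)\in D_m\}\right)\,dx_1\cdots dx_{m-1}.$$
By the inductive hypothesis, for $\lambda_{[0,1]^{m-1}}$-almost every $(x_1,\dots,x_{m-1})$ the vectors $\psi'_1(x_1),\dots,\psi'_{m-1}(x_{m-1})$ are linearly independent in $\bbr^k$ and hence span a subspace $V\subseteq\bbr^k$ of dimension $m-1$. Since $m-1<k$ (here we use $m\leq k$; in the critical step $m=k$ this becomes $m-1=k-1<k$), the orthogonal complement $V^\perp$ contains a nonzero vector $w$. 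For such a tuple, linear dependence of $\psi'_1(x_1),\dots,\psi'_m(x_m)$ is equivalent to $\psi'_m(x_m)\in V$, which forces $\psi'_m(x_m)\cdot w=0$. By the totally non-planar assumption applied to $\psi_m$ with the vector $w\neq 0$, the set of such $x_m$ has $\lambda_{[0,1]}$-measure zero. The inner integrand therefore vanishes almost everywhere, so $\lambda_{[0,1]^m}(D_m)=0$.

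The main technical point is recognizing that one should not try to induct on the lemma as stated (with $k$ vectors in $\bbr^k$), because the natural Fubini slice would then require analyzing $k-1$ vectors in $\bbr^k$, which does not fit the form of the hypothesis. Strengthening the statement to allow $m<k$ keeps the orthogonal complement $V^\perp$ nontrivial at each stage and lets the totally non-planar condition do its work. No additional smoothness beyond $C^1$ and no control on the curves beyond the totally non-planar property is used.
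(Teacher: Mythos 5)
Your proof is correct and follows essentially the same route as the paper: an induction on the number of curves showing that for almost every parameter tuple the first $i$ derivative vectors span an $i$-dimensional subspace, with the inductive step handled by Fubini and the totally non-planar condition applied to a normal vector of that span. Your write-up just makes explicit (the orthogonal complement vector $w$, the Fubini slicing) what the paper's proof leaves implicit.
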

\begin{proof}
%For $x\in [0,1]^k$, $f(x)=0$ if and only if the dimension of the space generated $\psi_1'(x_1),\dots,\psi_k'(x_k)$ is less than $k$
We will show the stronger claim that for $i=1,\dots,k$  almost every ${\bf y}\in[0,1]^i$  with respect to $\lambda_{[0,1]^i}$ the subspace $\rspan\{
\psi_1'(y_1),\dots,\psi_i'(y_i)\}$ is $i$ dimensional. The $i=1$ case follows trivially. Assume that the claim holds for $i<k$.  Let ${\bf y}\in[0,1]^{i+1}$ be such that $
\rspan\{\psi_1'(y_1),\dots,\psi_i'(y_i)\}$ is an $i$ dimensional subspace of $\bbr^k$. Then by the fact that $\psi$ is 
totally non-planar the probability that $y_{i+1}$ is outside of this subspace $\rspan\{\psi_1'(y_1),\dots,\psi_i'(y_i)\}$ is 1. 
Thus the dimension of the subspace $\rspan\{\psi_1'(y_1),\dots,\psi_{i+1}'(y_{i+1})\}$ is $i+1$ dimensional with 
probability 1.
\end{proof}
\begin{lemma}\label{nonplan}Let $\psi:[0,1]\ra\bbr^k$ be a totally non-planar function, and $M\in\Gl(k,\bbr)$, then $M\psi$ is a totally non-planar function.
\end{lemma}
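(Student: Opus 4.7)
The plan is a direct, almost mechanical verification that uses only the definition of totally non-planar together with the chain rule and the invertibility of $M$. Since $M \in \Gl(k,\bbr)$ is linear (in particular $C^\infty$), the chain rule gives $(M\psi)'(s) = M\psi'(s)$ at every $s$ where $\psi'(s)$ exists, and in particular $M\psi$ inherits the $C^1$ regularity of $\psi$. So only the second clause of the definition requires attention.

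Fix a nonzero test vector $v \in \bbr^k$. The key observation is the adjoint identity
\[
(M\psi)'(s)\cdot v \;=\; (M\psi'(s))\cdot v \;=\; \psi'(s)\cdot (M^{T}v),
\]
so the set
\[
\{s\in[0,1] : (M\psi)'(s)\text{ exists and }(M\psi)'(s)\cdot v=0\}
\]
coincides with
\[
\{s\in[0,1] : \psi'(s)\text{ exists and }\psi'(s)\cdot (M^{T}v)=0\}.
\]
Because $M \in \Gl(k,\bbr)$, the transpose $M^{T}$ is also invertible, and hence $M^{T}v \neq 0$. Applying the hypothesis that $\psi$ is totally non-planar with the test vector $M^{T}v$ shows that this set has $\lambda_{[0,1]}$-measure zero. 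Since $v$ was an arbitrary nonzero vector, $M\psi$ is totally non-planar.

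There is no real obstacle here: the entire content of the lemma is the statement that the dual action $v \mapsto M^{T}v$ of $\Gl(k,\bbr)$ on $\bbr^k$ permutes the nonzero vectors, so the property (which quantifies over all nonzero directions) is preserved under composition with a linear automorphism. The only thing one might want to be mildly careful about is the exceptional set where $\psi'$ fails to exist, but this set has measure zero independently of $v$ and is the same for $\psi$ and $M\psi$, so it contributes nothing.
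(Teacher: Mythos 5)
Your proof is correct and is essentially the paper's argument: the paper's one-line proof ("this follows since $\frac{d}{dt}(M\psi(t))=M\psi'(t)$") implicitly relies on exactly the reduction you make explicit, namely $(M\psi'(s))\cdot v=\psi'(s)\cdot(M^{T}v)$ with $M^{T}v\neq 0$ for $v\neq 0$. Your version just spells out the details the paper leaves to the reader, so nothing further is needed.
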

\begin{proof}This follows easily since $\f{d}{dt}(M\psi(t))=M\psi'(t)$.
\end{proof}

\begin{corollary}\label{abscon} Let $\psi_1,\dots,\psi_k:[0,1]\ra\bbr^k$ be  totally non-planar functions, and $M_1,\dots,M_k\in\Gl(k,\bbr)$. Define $\Psi:[0,1]^k\ra\bbr^k$ as
$$\Psi(t_1,\dots,t_k)=\sum_{i=1}^kM_i\psi_i(t_i).$$
Then $\Psi_*\lambda_{[0,1]^k}$ is absolutely  continuous with respect to Lebesgue measure on $\bbr^k$.
\end{corollary}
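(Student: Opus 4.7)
The plan is to identify the Jacobian of $\Psi$ at a point $(t_1,\dots,t_k)$ with the $k\times k$ matrix whose $i$-th column is $M_i\psi_i'(t_i)$, and then invoke the inverse function theorem together with a standard measure-theoretic argument.

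First I would apply Lemma \ref{nonplan} to each $M_i\psi_i$: since $M_i\in\Gl(k,\bbr)$, the curve $M_i\psi_i$ is totally non-planar. Lemma \ref{span} then says that
$$\lambda_{[0,1]^k}\Bigl(\bigl\{(t_1,\dots,t_k)\in[0,1]^k:\det[M_1\psi_1'(t_1),\dots,M_k\psi_k'(t_k)]=0\bigr\}\Bigr)=0.$$
Call this null set $N$. On the open set $V:=[0,1]^k\sm N$ (open in the sense that it contains an open neighborhood of each of its points where $\psi_i'$ are continuous, which is where Lemma \ref{span} applies after an initial reduction; more precisely we work with the full-measure subset where every $\psi_i'$ exists), the Jacobian of $\Psi$ is invertible, so the inverse function theorem gives that $\Psi$ is locally a $C^1$ diffeomorphism at every point of $V$.

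Next I would verify absolute continuity directly. Suppose $E\subseteq\bbr^k$ has Lebesgue measure zero; we must show $\lambda_{[0,1]^k}(\Psi^{-1}(E))=0$. Since $N$ is null, it suffices to control $\Psi^{-1}(E)\cap V$. Cover $V$ by countably many open sets $W_\ell$ on each of which $\Psi|_{W_\ell}$ is a diffeomorphism onto its image (this is possible because $V$ is a subset of $\bbr^k$, hence second countable, and each point admits such a neighborhood by the inverse function theorem). On each $W_\ell$, the change-of-variables formula gives
$$\lambda_{[0,1]^k}(\Psi^{-1}(E)\cap W_\ell)=\int_{E\cap\Psi(W_\ell)}\bigl|\det D(\Psi|_{W_\ell})^{-1}\bigr|\,d\lambda_{\bbr^k}=0,$$
since the integrand is integrated over a Lebesgue null set. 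Summing over $\ell$ yields $\lambda_{[0,1]^k}(\Psi^{-1}(E)\cap V)=0$, and hence $\Psi_*\lambda_{[0,1]^k}(E)=\lambda_{[0,1]^k}(\Psi^{-1}(E))=0$.

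There is essentially no hard step: all the real work has been done in Lemmas \ref{span} and \ref{nonplan}, which guarantee the Jacobian is a.e.\ nonsingular. The only point that requires a little care is covering $V$ by countably many diffeomorphic neighborhoods and handling the measurability of $\Psi^{-1}(E)$; this is standard once the Jacobian condition is in hand. Note that this corollary only asserts absolute continuity, not any quantitative control on the Radon--Nikodym derivative, which (as the author signals) will be the subject of subsequent work in this section.
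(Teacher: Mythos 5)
Your proposal is correct and follows essentially the same route as the paper: combine Lemmas \ref{span} and \ref{nonplan} to get an a.e.\ nonsingular Jacobian, then invoke the inverse function theorem to conclude absolute continuity. The paper states this in two lines; you have simply supplied the standard countable-cover and change-of-variables details that it leaves implicit.
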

\begin{proof}
By Lemmas \ref{span} and \ref{nonplan}, the map $\Psi$ is locally invertible. Hence by the inverse function theorem  $\Psi_*\lambda_{[0,1]^k}$ is absolutely  continuous with respect to Lebesgue measure on $\bbr^k$.
\end{proof}
As mentioned above, Corollary \ref{abscon} does not suffice in proving our results. With slightly stronger assumptions on $\mu_A$, one could use Corollary \ref{abscon} combined with the mixing of the expanding horispherical leaf  under the action of $\diag(e^{k_2},\dots,e^{k_2},e^{-k_1},\dots,e^{-k_1})$ to prove  weak-$*$ convergence of $\mu^{*n}$ to the $G$-invariant probability measure on $X$. However, we are interested in the rate of this convergence and to apply the mixing results of Kleinbock and Margulis (See Theorem \ref{mix})  we need to control the derivatives of order up $m$ for a fixed $m$ depending on $G$, of the Radon Nikodym derivatives of the measure on the expanding leaf\cite{KM}. Sections \ref{non-planar} and \ref{three} set up the necessary structure to control these derivatives.
 
For the remainder of the section, fix a totally non-planar function $\psi:[0,1]\ra\bbr^k$ and define the function $F_\psi:A_k^k\times[0,1]^k\ra \bbr$ by
 $$(a_1,\dots,a_k,x_1,\dots,x_k)\mapsto \det\left[a_1\psi'(x_1),\dots,a_k\psi(x_k)\right].$$
Then by Lemmas \ref{span} and \ref{nonplan} for every ${\bf a}=(a_1,\dots,a_k)\in A_k^k$,
$$\lambda_{[0,1]^k}\left(\left\{x\in[0,1]^k:F_\psi({\bf a},{\bf x})=0\right\}\right)=0.$$
% Furthermore, since $\psi$ was $C^1$, for a fixed $a\in A_k^k$,  the set $\left\{x\in\bbr^k:F_\psi(a,x)=0\right\}$ is a co-dimension 1 submanifold of $\bbr^k$. 
%Define $$E:=\left\{(a,x)\in A_k^k\x[0,1]^k:F_\psi(a,x)\neq0\right\},$$ and
For ${\bf a}\in A_k^k$, define $$E_{\bf a}:=\left\{{\bf x}\in(0,1)^k:F_\psi({\bf a},{\bf x})\neq0\right\},$$ 
%As $F_\psi$ is continuous in $a$ and $x$,  we have that $E^c:=\left\{(a,x)\in A_k^k\x[0,1]^k:F_\psi(a,x)=0\right\}$ is a propper submanifold of $A_k^k\x [0,1]^k$. 
 and  the function $\Psi_{{\bf a}}:[0,1]^k\ra\bbr^k$ by 
$$(x_1,\dots,x_k)\mapsto \sum_{i=1}^ka_i\psi(x_i).$$
%Let $\nu_0=\psi_*\lambda_{[0,1]}$.
Further, define the measure
$$\nu_{\bf a}={\Psi_{{\bf a}}}_*\lambda_{[0,1]^k}.$$
%A quick calculation shows that
%$$\nu_a=({a_1}_*\nu_0)*\dots*({a_k}_*\nu_0).$$
 For fixed ${\bf a}\in A_k^k$, let $x_0\in E_a$. By the inverse function theorem, let   $\Omega\subset E_{\bf a}$ be a neighborhood of $x_0\in E_{\bf a}$ such that  $\Psi_{{\bf a}}$ is injective on $\Omega$.  Then ${\Psi_{{\bf a}}}_*(\lambda_{[0,1]^k}\big|_{\Omega})$ is an absolutely continuous measure with respect to $\lambda_{\bbr^k}$ and has Radon-Nikodym derivative $g_{{\bf a},\Omega}$, where 
\begin{equation}\label{radon}g_{{\bf a},\Omega}(x)=\begin{cases}\f{1}{F_\psi({\bf a},\left(\Psi_{{\bf a}}\big|_{\Omega}\right)^{-1}(x))}&x\in\Psi_{{\bf a}}(\Omega)\\0&otherwise.\end{cases}\end{equation}
Given a point $x\in\bbr$ and $\z>0$ let $I_\z(x)=[x-\z,x+\z]$. Given ${\bf x}\in\bbr^k$, let $I_\z({\bf x})=I_\z(x_1)\x\dots\x I_\z(x_k)$.  
\begin{lemma}\label{neigh}
Let ${\bf a}_0\in A_k^k$ and ${\bf x}_0\in E_{{\bf a}_0}:=\left\{{\bf x}\in[0,1]^k:F_\psi({\bf a}_0,{\bf x})\neq0\right\}$. There exist a  closed neighborhood $\Omega_{{\bf a}_0}$ of ${\bf a}_0$ in $A_k^k$ and $r_{{\bf a}_0}>0$  such that for every ${\bf a}\in \Omega_{{\bf a}_0}$, 
\begin{enumerate}
\item $I_{r_{{\bf a}_0}}({\bf x}_0)\sbq \left\{{\bf x}\in\bbr^k:F_\psi({\bf a},{\bf x})\neq0\right\}$
\item $\Psi_{\bf a}$ is injective on $I_{r_{{\bf a}_0}}({\bf x}_0)$.
\end{enumerate}
\end{lemma}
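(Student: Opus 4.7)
The plan is to derive both conclusions from a single application of the inverse function theorem to an enlarged map whose Jacobian is precisely $F_\psi$, after noting that $F_\psi$ is continuous.

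First I would observe that because $\psi\in C^1$ (which is part of the definition of totally non-planar), the map
\[
F_\psi:A_k^k\times[0,1]^k\lra\bbr,\qquad ({\bf a},{\bf x})\mapsto \det\bigl[a_1\psi'(x_1),\dots,a_k\psi'(x_k)\bigr],
\]
is jointly continuous. Since $F_\psi({\bf a}_0,{\bf x}_0)\neq 0$, continuity already produces an open product neighborhood $V\times I_{r_1}({\bf x}_0)$ of $({\bf a}_0,{\bf x}_0)$ on which $F_\psi$ does not vanish. This will handle conclusion (1) as soon as $\Omega_{{\bf a}_0}\sbq V$ and $r_{{\bf a}_0}\le r_1$.

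For conclusion (2), I would introduce the augmented map
\[
\widetilde\Psi:A_k^k\times[0,1]^k\lra A_k^k\times\bbr^k,\qquad \widetilde\Psi({\bf a},{\bf x})=({\bf a},\Psi_{\bf a}({\bf x})).
\]
Because the ${\bf a}$-coordinate is preserved, the differential of $\widetilde\Psi$ is block triangular, and its determinant equals $\det(\partial_{\bf x}\Psi_{\bf a}({\bf x}))=F_\psi({\bf a},{\bf x})$, which is nonzero at $({\bf a}_0,{\bf x}_0)$. The inverse function theorem then gives an open neighborhood of $({\bf a}_0,{\bf x}_0)$ on which $\widetilde\Psi$ is a diffeomorphism, and in particular injective; shrinking to a product, I obtain $W\times I_{r_2}({\bf x}_0)$ on which $\widetilde\Psi$ is injective. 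Slicewise injectivity of $\Psi_{\bf a}$ is then automatic: for ${\bf a}\in W$ and ${\bf x},{\bf x}'\in I_{r_2}({\bf x}_0)$ the equality $\Psi_{\bf a}({\bf x})=\Psi_{\bf a}({\bf x}')$ forces $\widetilde\Psi({\bf a},{\bf x})=\widetilde\Psi({\bf a},{\bf x}')$, hence ${\bf x}={\bf x}'$.

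To finish, I would set $r_{{\bf a}_0}=\tfrac12\min(r_1,r_2)$ and take $\Omega_{{\bf a}_0}$ to be a small closed ball around ${\bf a}_0$ contained in $V\cap W$; the factor $\tfrac12$ lets the closed box $I_{r_{{\bf a}_0}}({\bf x}_0)$ sit inside the open box $I_{\min(r_1,r_2)}({\bf x}_0)$, so both (1) and (2) hold simultaneously on this closed product neighborhood. There is no real obstacle in the argument; the only point worth highlighting is the coincidence that the quantity $F_\psi$ controlling (1) is exactly the Jacobian determinant appearing in the inverse function theorem, so both conclusions are packaged by the single map $\widetilde\Psi$.
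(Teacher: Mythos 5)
Your argument is correct, but it reaches the conclusion by a different mechanism than the paper. The paper fixes ${\bf a}$ and applies the inverse function theorem to each slice map $\Psi_{\bf a}:\bbr^k\ra\bbr^k$, then tries to make the resulting radii uniform in ${\bf a}$: it introduces the nonvanishing radius $r_1({\bf a})$ and an injectivity-radius lower bound $r_2({\bf a})$, asserts both are continuous and strictly positive on a closed (compact) neighborhood $\Omega_{{\bf a}_0}$, and takes the infimum. You instead apply the inverse function theorem once, to the augmented map $\widetilde\Psi({\bf a},{\bf x})=({\bf a},\Psi_{\bf a}({\bf x}))$ on the product, whose block-triangular differential has determinant exactly $F_\psi({\bf a},{\bf x})$; a single local diffeomorphism statement then hands you a product neighborhood $W\x I_{r_2}({\bf x}_0)$ on which slicewise injectivity is automatic, and joint continuity of $F_\psi$ handles item 1 directly. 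What your route buys is that the uniformity in ${\bf a}$ comes for free from the product structure, so you never need the paper's more delicate claims that $r_1({\bf a})$ and the IFT injectivity radius $r_2({\bf a})$ depend continuously on ${\bf a}$ (the latter really requires a quantitative version of the inverse function theorem to justify); the paper's pointwise-plus-compactness route is closer to how the radii are later used, but is logically heavier. One shared loose end, which you inherit from the paper rather than create: if ${\bf x}_0$ lies on the boundary of $[0,1]^k$ one should extend $\psi$ to a $C^1$ function on a slightly larger interval (or work with one-sided derivatives) before invoking the inverse function theorem, and the same extension is implicit in the statement's use of $I_{r_{{\bf a}_0}}({\bf x}_0)\sbq\{{\bf x}\in\bbr^k:F_\psi({\bf a},{\bf x})\neq0\}$; a sentence acknowledging this would make your write-up airtight.
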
 
\begin{proof}
Since ${\bf x}_0\in E_{{\bf a}_0}$, there exists a closed neighborhood $\Omega_{{\bf a}_0}$ of ${\bf a}_0$ in $A_k^k$ such that for every ${\bf a}\in\Omega_{a_0}$, ${\bf x}_0\in E_{\bf a}$. Define $r_1:\Omega_{{\bf a}_0}\ra\bbr$ as follows
$${\bf a}\mapsto \sup\left\{r>0: I_r({\bf x}_0)\sbq \left\{({\bf a},{\bf x})\in A_k^k\x\bbr^k:F_\psi({\bf a},{\bf x})\neq0\right\}\right\}.$$
As $F_\Psi$ is a continuous function in ${\bf a}$ and ${\bf x}$, the set $\left\{({\bf a},{\bf x})\in A_k^k\x[0,1]^k:F_\psi({\bf a},{\bf x})=0\right\}$ is a proper submanifold of $A_k^k\x [0,1]^k$. Thus $r_1$ is a continuous strictly positive function of ${\bf a}$.  Let $r_2:\omega_{{\bf a}_0}\ra\bbr^+$ be the lower bound of the injectivity radius given by the inverse function theorem. By construction $r_2$ is continuous and strictly positive on $\Omega_{{\bf a}_0}$. 
%
%Given $x\in[0,1]^k$, $a\in A^k_k$, and $r>0$ we say that $(a,x)$ is $r$-good if for every $i=1,\dots,k$, and $y\in I_r(x_0)$, $$a_i
%\psi'(y_i)=\sum_{j=1}^kb_ja_j\psi'((x_0)_j),$$ with $\f{-1}{4k}<b_j<\f1{4k}$ for $i\neq j$ and $\f{-1}{4k}<b_i-1<
%\f1{4k}$.
%
 %Define $r_2:\Omega_{a_0}\ra\bbr$ as 
%$$a\mapsto \sup\left\{r>0: (a,x_0)\text{ is $r$-good}\right\}$$
%As $r_2$ was constructed to be the first time a a continuous function equaled a constant, it follows that that $r_2$ as a function on $\Omega_{a_0}$  is continuous. 
%
%Let $a\in \Omega_{a_0}$ and let $y,z\in I_{r_2(a)}(x_0)$. 
%By Taylor's theorem, there exists $w\in I_{r_2(a)}(x_0)$ such that for $i\in\{1,\dots,k\}$,
% $a_i\psi(y_i)-a_i\psi(z_i)=a_i \psi'(w_i)(y_i-w_i)$. 
% 
% Fix a coordinate $i$ such that $|y_i-z_i|\geq|y_j-z_j|$ for $j\in\{1,\dots,k\}$.  
% Then with respect to the basis $\{\psi'(x_1),\dots,\psi'(x_k)\}$, the magnitude of the $i$th coordinate of $a_i\psi(y_i)-a_i
% \psi(z_i)$ is at least $c_i|y_i-z_i|(1-\f1{4k})$. Furthermore, for $j\neq i$ the magnitude of the $i$th coordinate of $c_j
% \psi(y_j)-c_j\psi(z_j)$ is at most $c_i|y_j-z_j|(\f1{4k})\leq c_i|y_i-z_i|(\f1{4k})$. Thus $\Psi_a(y)\neq\Psi_a(z)$. In other words $\Psi_a$ is injective on $I_{r_2(a)}(x_0)$.
%
By continuity, 
 $r=\inf_{{\bf a}\in\Omega_{{\bf a}_0}}\{\min\{r_1({\bf a}),r_2({\bf a})\}\}$ satisfies 1. and 2.
\end{proof}

For a function $f\in C^m(\bbr^k)$,let
$$\|f\|_{C^m}:=\sup_{\al:|\al|\leq m}\|f^\al\|_\infty.$$

\begin{lemma}\label{bound} Further assume that $\psi$ as above is $C^{m+1}([0,1])$. Let $\Omega_A\sbq A_k^k$ be  compact and $I=I_1\x\dots\x I_k\sbq[0,1]^k$ where $I_i$ is a closed interval in $[0,1]$. Suppose that for ${\bf a}\in \Omega_A$, $\Psi_{\bf a}\Big|_{I}$ is injective. For ${\bf a}\in \Omega_A$ let $\nu_{{\bf a},I}= {\Psi_{\bf a}}_*\lambda_{[0,1]^k}\big|_{I}$.  Fix  $\e>0$. Then for every  ${\bf a}\in \Omega_A$, there exist positive measures measures $\tau_{1,{\bf a},I}$ and $\tau_{2,{\bf a},I}$ on $\bbr^k$ and a constant $C$, such that 
\begin{enumerate}
\item $\nu_{{\bf a},I}=\tau_{1,{\bf a},I}+\tau_{2,{\bf a},I}$
\item $ \tau_{1,{\bf a},I}=g_{{\bf a},I}\lambda_{\bbr^k}$ for some $g_{{\bf a},I}\in C^m(\bbr^k)$
\item $\|g_{{\bf a},I}\|_{C^m}<C$ 
\item $\tau_{2,{\bf a},I}(\bbr^k)<\e$.
\end{enumerate}
\end{lemma}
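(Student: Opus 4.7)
My plan is to isolate a ``bad'' region inside $I$ where either the Jacobian $F_\psi(\mathbf{a},\cdot)$ is near zero or $\mathbf{x}$ is close to $\partial I$, send the pushforward of Lebesgue measure on this region into $\tau_{2,\mathbf{a},I}$, and let $\tau_{1,\mathbf{a},I}$ be the pushforward of a smoothly cut-off complement. Concretely, writing $I_i = [s_i, t_i]$, fix a smooth $\chi \in C^\infty(\bbr)$ with $\chi \equiv 0$ on $[-1/2,1/2]$ and $\chi \equiv 1$ outside $[-1,1]$; set $\chi_\delta(t) := \chi(t/\delta)$, and let $\beta \in C_c^\infty(\bbr^k)$ satisfy $0 \le \beta \le 1$, $\beta \equiv 1$ on the shrunken box $I' := \prod_i [s_i+\eta, t_i-\eta]$, and $\supp \beta \subseteq I$. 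Set
\[
\varphi_{\mathbf{a}}(\mathbf{x}) := \beta(\mathbf{x})\,\chi_\delta\bigl(F_\psi(\mathbf{a},\mathbf{x})\bigr),\qquad
\tau_{1,\mathbf{a},I} := {\Psi_{\mathbf{a}}}_*\bigl(\varphi_{\mathbf{a}}\,\lambda_{[0,1]^k}\big|_I\bigr),\qquad
\tau_{2,\mathbf{a},I} := {\Psi_{\mathbf{a}}}_*\bigl((1-\varphi_{\mathbf{a}})\,\lambda_{[0,1]^k}\big|_I\bigr).
\]
Conclusion (1) is immediate.

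For (4), since $1-\varphi_{\mathbf{a}} \leq (1-\beta) + (1-\chi_\delta(F_\psi(\mathbf{a},\cdot)))$, it suffices to make $\lambda(I \setminus I') + \lambda(\{\mathbf{x}\in I : |F_\psi(\mathbf{a},\mathbf{x})| \le \delta\}) < \epsilon$ uniformly in $\mathbf{a} \in \Omega_A$. The first summand is $< \epsilon/2$ for $\eta$ small. For the second, by Lemmas \ref{span} and \ref{nonplan} each $\mathbf{a}_0 \in \Omega_A$ admits $\delta_{\mathbf{a}_0} > 0$ with $\lambda(\{|F_\psi(\mathbf{a}_0,\cdot)| \le 2\delta_{\mathbf{a}_0}\} \cap I) < \epsilon/2$; uniform continuity of $F_\psi$ on the compact set $\Omega_A \times I$ yields a neighborhood of $\mathbf{a}_0$ on which $\{|F_\psi(\mathbf{a},\cdot)| \le \delta_{\mathbf{a}_0}\} \subseteq \{|F_\psi(\mathbf{a}_0,\cdot)| \le 2\delta_{\mathbf{a}_0}\}$, and a finite subcover of $\Omega_A$ produces a uniform $\delta > 0$.

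For (2) and (3), on $\supp \varphi_{\mathbf{a}}$ we have $|F_\psi(\mathbf{a},\cdot)| \geq \delta/2$, so by the inverse function theorem together with the injectivity hypothesis, $\Psi_{\mathbf{a}}$ is a $C^{m+1}$ diffeomorphism from an open neighborhood $V$ of $\supp \varphi_{\mathbf{a}}$ (still inside $I$) onto $\Psi_{\mathbf{a}}(V)$. By \eqref{radon}, $\tau_{1,\mathbf{a},I} = g_{\mathbf{a},I}\,\lambda_{\bbr^k}$ with
\[
g_{\mathbf{a},I}(y) = \frac{\varphi_{\mathbf{a}}(\Psi_{\mathbf{a}}^{-1}(y))}{|F_\psi(\mathbf{a},\Psi_{\mathbf{a}}^{-1}(y))|}
\]
on $\Psi_{\mathbf{a}}(V)$, extended by $0$; because $\supp \varphi_{\mathbf{a}}$ is a compact subset of $V$ on which $\varphi_{\mathbf{a}}$ vanishes near its boundary, this extension is $C^m$ on $\bbr^k$. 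A Faà di Bruno expansion then bounds $\|g_{\mathbf{a},I}\|_{C^m}$ by a polynomial in $\delta^{-1}$, $\|\psi\|_{C^{m+1}}$, $\|\chi\|_{C^m}$, $\|\beta\|_{C^m}$, and $\sup_{\mathbf{a}\in\Omega_A}\|\mathbf{a}\|$, yielding a constant $C$ independent of $\mathbf{a}$.

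The main technical difficulty is the uniform $C^m$ bookkeeping for (3): one must verify that each intermediate quantity --- the $C^m$ norm of $\chi_\delta\circ F_\psi(\mathbf{a},\cdot)$ (which introduces factors $\delta^{-m}$), the $C^m$ norm of $\Psi_{\mathbf{a}}^{-1}$ on $\Psi_{\mathbf{a}}(\supp\varphi_{\mathbf{a}})$ (obtained by repeatedly differentiating $\Psi_{\mathbf{a}}\circ\Psi_{\mathbf{a}}^{-1}=\mathrm{id}$, yielding rational expressions in cofactors of the Jacobian over powers of $F_\psi$), and the $C^m$ norm of $1/|F_\psi|$ on $\supp\varphi_{\mathbf{a}}$ --- depends on $\mathbf{a}$ only through quantities bounded uniformly on $\Omega_A$. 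Once these are combined via the chain rule, a single constant $C = C(\epsilon, \Omega_A, I, \|\psi\|_{C^{m+1}})$ controls $\|g_{\mathbf{a},I}\|_{C^m}$.
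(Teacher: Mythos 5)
Your proof is correct, and it is worth noting where it diverges from the paper's. The paper's own proof is the bare-hands version of your construction: it writes $\nu_{{\bf a},I}=f_{{\bf a},I}\lambda_{\bbr^k}$ with $f_{{\bf a},I}=1/F_\psi({\bf a},\Psi_{\bf a}^{-1}(\cdot))$ as in (\ref{radon}), gets a uniform bound on the derivatives of $f_{{\bf a},I}$ from compactness of $\Omega_A$ (implicitly using that $F_\psi$ is bounded away from zero on $\Omega_A\times I$, which is supplied in the intended application by Lemma \ref{neigh}, not by the injectivity hypothesis alone), and then obtains $g_{{\bf a},I}$ by multiplying by a single smooth bump equal to $1$ on most of $I$ and supported in its interior, so that only the boundary discontinuity of $\Psi_{\bf a}(I)$ is cut away. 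You do the same boundary cutoff with $\beta$, but you additionally insert the factor $\chi_\delta(F_\psi({\bf a},\cdot))$ and prove, via Lemmas \ref{span}--\ref{nonplan} plus a compactness/uniform-continuity covering argument, that the near-critical set $\{|F_\psi({\bf a},\cdot)|\le\delta\}\cap I$ has uniformly small measure. This buys you a proof of the lemma exactly as stated (injectivity only, no nonvanishing-Jacobian assumption), at the cost of folding into the lemma a small-measure argument that the paper instead performs once, globally, at the start of the proof of Corollary \ref{decomp} (removal of a $\delta$-neighborhood of $E_{\bf a}^c$); your constant $C$ then degrades like a power of $\delta^{-1}$, which is harmless since $\e$ is fixed first. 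The only point to tidy is that you should take $\supp\beta$ in the interior of $I$ (not merely in $I$), so that $\supp\varphi_{\bf a}$ has an open neighborhood $V$ inside $I$ on which $\Psi_{\bf a}$ is a diffeomorphism and the extension of $g_{{\bf a},I}$ by zero is genuinely $C^m$; also note the density should carry $|F_\psi|$, i.e.\ the absolute value of the Jacobian, as you wrote (the paper's (\ref{radon}) omits it).
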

\begin{proof} By (\ref{radon}) 
$\nu_{{\bf a},I}=f_{{\bf a},I}\lambda_{\bbr^k}$, where
\begin{equation*}f_{{\bf a},I}(x)=\begin{cases}\f{1}{F({\bf a},\left(\Psi_{{\bf a}}\big|_{\Omega}\right)^{-1}({\bf x}))}&{\bf x}\in\Psi_{{\bf a}}(I)\\0&otherwise.\end{cases}\end{equation*}
By the assumption that $\psi\in C^{m+1}(\bbr)$, $f_{a,I}$ is piecewise $C^m(\bbr^k)$. Indeed, the only points where the derivatives are not continuous is at the boundary of $\Psi_{\bf a}(I)$. Furthermore, as $\Omega_A$ is compact, there exists $C_1$ such that for every ${\bf a}\in \Omega_A$, $x\in \Psi_{\bf a}(I)$ and multi-index $\al$ s.t. $|\al|\leq m$,
\begin{equation}\label{derivatives}|f_{{\bf a},I}^\al(x)|\leq C_1.\end{equation}
We obtain $g_{{\bf a},I}$ by multiplying $f_{{\bf a},I}$ by a $C^\infty$ function with values in $[0,1]$ which is $1$ on a large subset of $I$ and has support contained in the interior of $I$.
%Then the claim follows by applying the $C^\infty$ Uryshon's Lemma applied to a sufficiently  large subset of $I$.
%Let $J_1=J_{1,1}\x\dots\x J_{1,k}$ be a compact rectangular set strictly contained in an open rectangular set $J_2=J_{2,1}\x\dots\x J_{2,k}$ which in turn is strictly contained in $I$ such that $\lambda_{[0,1]^k}(I
%\setminus J_1)<\e$. By Urysohn's Lemma, there exists a $C^m$ function $h:[0,1]\ra[0,1]$ which is $1$ on $J_1$ and $0$ on $[0,1]\setminus J_2$. For $a\in \Omega_A$, define 
%$$\tau_{1,a,I}= {\Psi_a}_*(h\lambda_{[0,1]^k}\big|_{I}).$$ Then $\tau_{1,a,I}=g_{a,I}\lambda_{\bbr^k}$, where
%\begin{equation*}g_{a,I}(x)=\begin{cases}\f{h(\Psi_{a}^{-1}(x))}{F_\psi(a,\Psi_{a}^{-1}(x))}&x\in\Psi_{a}(I)\\0&otherwise.\end{cases}\end{equation*}
%By construction, for each point $x$ at which $f_{a,I}$ was not differentiable, there is a neighborhood $\SO$ of $x$ such that $h\circ\Psi_a$ is identically $0$ on $\SO$. Thus, by the fact that $h$ was $C^m$, $\Omega_A$ is compact, and (\ref{derivatives}), the claim holds.
\end{proof}

\begin{remark}\label{strong} Observe that Lemmas \ref{neigh} and \ref{bound} are local statements. Indeed, neither requires that $\psi$ be differentiable on all of $[0,1]$. Lemma \ref{neigh} only requires differentiability on a neighborhood of ${\bf x}_0$ and Lemma \ref{bound} only requires the function to be differentiable on $I$. We need these additional constraints in section \ref{sev}.\end{remark}

\begin{corollary}\label{decomp} Again, let $B$ be a compact set in $A_k^k$ and suppose $\psi$ is $C^{m+1}$. Then given $\e>0$,  there exist $C\in\bbr^+$ and positive measures $\tau_{1,{\bf a}}$ and $\tau_{2,{\bf a}}$ on $\bbr^k$   for all ${\bf a}\in B$  such that 
\begin{enumerate}
\item $(\Psi_{\bf a})_*\lambda_{[0,1]^k}=\tau_{1,{\bf a}}+\tau_{2,{\bf a}}$
\item \label{item2}$\tau_{2,{\bf a}}(\bbr^k)<\epsilon$.
\item $\tau_{1,{\bf a}}=g_{\bf a}\lambda_{\bbr^k}$ for $g_{\bf a}$ in $C^m$ 
\item $\|g_{\bf a}\|_{C^m}<C$
\end{enumerate}
\end{corollary}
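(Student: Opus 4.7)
The plan is to combine Lemmas \ref{neigh} and \ref{bound} with compactness of $B$. Fix $\epsilon > 0$. First I would work locally at each ${\bf a}_0 \in B$: Lemmas \ref{span} and \ref{nonplan} imply that $Z_{{\bf a}_0} := \{{\bf x}\in[0,1]^k : F_\psi({\bf a}_0, {\bf x}) = 0\}$ has Lebesgue measure zero, so I would enclose it in an open set $V_{{\bf a}_0}$ with $\lambda_{[0,1]^k}(V_{{\bf a}_0}) < \epsilon/3$, leaving the compact set $K_{{\bf a}_0} := [0,1]^k \setminus V_{{\bf a}_0} \subseteq E_{{\bf a}_0}$.

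For each ${\bf x} \in K_{{\bf a}_0}$, Lemma \ref{neigh} produces a cube $I_{r_{\bf x}}({\bf x})$ and a neighborhood $\Omega_{{\bf a}_0, {\bf x}}$ of ${\bf a}_0$ such that $\Psi_{{\bf a}'}$ is injective on the cube for every ${\bf a}' \in \Omega_{{\bf a}_0, {\bf x}}$. By compactness of $K_{{\bf a}_0}$, finitely many such cubes $I_{r_j}({\bf x}_j)$ cover $K_{{\bf a}_0}$; set $\Omega_{{\bf a}_0} := \bigcap_j \Omega_{{\bf a}_0, {\bf x}_j}$. To avoid double counting I would refine this cover into a finite disjoint family of closed rectangular boxes $J_1, \dots, J_N$, each contained in some $I_{r_j}({\bf x}_j)$, satisfying $\lambda_{[0,1]^k}\bigl([0,1]^k \setminus \bigcup_\ell J_\ell\bigr) < 2\epsilon/3$. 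This dyadic-subdivision inside each of the finitely many cubes is routine combinatorial bookkeeping and is the only step of mild technical difficulty.

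For each ${\bf a}' \in \Omega_{{\bf a}_0}$, I would apply Lemma \ref{bound} to every $J_\ell$ with parameter $\epsilon/(3N)$, obtaining a decomposition $\nu_{{\bf a}', J_\ell} = g_{{\bf a}', \ell}\lambda_{\bbr^k} + \tau_{2, {\bf a}', \ell}$ with $\|g_{{\bf a}', \ell}\|_{C^m} \leq C_\ell$ and $\tau_{2, {\bf a}', \ell}(\bbr^k) < \epsilon/(3N)$. Setting
$$g_{{\bf a}'} := \sum_{\ell=1}^N g_{{\bf a}', \ell}, \qquad \tau_{1, {\bf a}'} := g_{{\bf a}'}\,\lambda_{\bbr^k}, \qquad \tau_{2, {\bf a}'} := (\Psi_{{\bf a}'})_*\lambda_{[0,1]^k} - \tau_{1, {\bf a}'},$$
the disjointness of the $J_\ell$ guarantees that $\tau_{2, {\bf a}'}$ is a positive measure whose total mass is at most $\lambda_{[0,1]^k}\bigl([0,1]^k \setminus \bigcup_\ell J_\ell\bigr) + \sum_\ell \tau_{2, {\bf a}', \ell}(\bbr^k) < \epsilon$, while $\|g_{{\bf a}'}\|_{C^m} \leq \sum_\ell C_\ell$. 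To conclude, I would cover the compact set $B$ by finitely many neighborhoods $\Omega_{{\bf a}_0^{(1)}}, \dots, \Omega_{{\bf a}_0^{(M)}}$ and set $C := \max_i \sum_\ell C_\ell^{(i)}$, yielding the required decomposition uniformly for every ${\bf a} \in B$.
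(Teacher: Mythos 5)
Your proof is correct and takes essentially the same route as the paper: both arguments use Lemma \ref{neigh} to produce rectangles on which $\Psi_{\bf a}$ is injective and $F_\psi$ is nonvanishing uniformly over a neighborhood in $A_k^k$, discard a set of small measure near the zero locus, refine to a finite disjoint rectangular family, apply Lemma \ref{bound} on each box, and sum, with compactness supplying the uniform constant $C$. The only inessential difference is bookkeeping: the paper covers the compact set $D_0\subseteq B\times[0,1]^k$ by product rectangles in a single step (using a uniform $\delta_0$), whereas you localize at each ${\bf a}_0\in B$ and conclude with a second finite cover of $B$.
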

\begin{proof}
Let $\e>0$ be given. Let
$$E_{\bf a}^c:=\left\{{\bf x}\in[0,1]^k:F_\psi({\bf a},{\bf x})=0\right\},$$ 
and for $\delta>0$, let $$E_{{\bf a},\delta}^c:=\{x\in[0,1]^k:d(x,E_{\bf a}^c)<\delta\},$$
where $d(\cdot,\cdot)$ is the standard euclidean distance on $\bbr^k$.
%  As $$\left\{({\bf a},{\bf x})\in B\x[0,1]^k:F_\psi({\bf a},{\bf x})=0\right\}$$ is a proper Zariski closed subset of $B\x[0,1]^k$ and 
 Let 
$h:B\x \bbr^+\ra[0,1]$ be defined as
$$({\bf a},\delta)\mapsto\lambda_{[0,1]^k}(E_{{\bf a},\delta}^c).$$
For every ${\bf a}\in B$, $E_{\bf a}^c$ is a proper Zariski closed subset of $[0,1]^k$. 
Hence for every fixed ${\bf a}\in B$, $$\lim_{\delta\ra0}H({\bf a},\delta)=0.$$
Furthermore, since $B$ is compact and $h$ is continuous in ${\bf a}$ and $\delta$, there exists a $\delta_0>0$ such that for all ${\bf a}\in B$, $$h({\bf a},\delta_0)<\e/2.$$
Define $$D_0:=\left\{({\bf a},{\bf x})\in A_k^k\x[0,1]^k:d(x,E_{\bf a}^c)\geq\delta \right\}.$$ Thus
\begin{equation}\label{more}\lambda_{[0,1]^k}\left({\bf x}\in[0,1]^k:({\bf a},{\bf x})\in D_0\right)>1-\e/2.\end{equation}
  For every $({\bf a},{\bf x})\in D_0$, let $\tilde{\Omega}_{({\bf a},{\bf x})}:=\tilde{\Omega}_{\bf a}\x I_{\tilde{r}({\bf a})}({\bf x})$, where $\tilde{\Omega}_{\bf a}\sbq B$,  a closed
  neighborhood of ${\bf a}$, and $\tilde{r}({\bf a})>0$ are found by Lemma \ref{neigh}. By 
compactness of $D_0$, there exist $({\bf a}_1,{\bf x}_1),\dots,({\bf a}_{l_0},{\bf x}_{l_0})$ such that $D_0\sbq \bigcup_{i=1}^{l_0}
\tilde{\Omega}_{(a_i,x_i)}\subseteq E$. Let $D_\e:=\bigcup_{i=1}^{l_0}
{\tilde{\Omega}_{(a_i,x_i)}}$. Since the union of finitely many rectangular sets can be decomposed into finitely many disjoint rectangular sets, there exist disjoint sets
$\Omega_1,\dots,\Omega_l$ such that $$\Omega_i=\Omega_{{\bf a}_i}\x \Omega_{{\bf x}_i}\sbq\tilde{\Omega}_{{\bf a}_j,{\bf x}_j}\text{ for some $j\in\{1,\dots,l_0\}$},$$ $\Omega_{{\bf x}_i}=I_{1,i}\x\dots\x I_{k,i}$ for closed intervals $I_{1,i},\dots, I_{k,i}$,
 and for every $ {\bf a}\in B$,
$$\{x\in[0,1]^k:({\bf a},x)\in D_\e\}=\cup_{i:a\in\Omega_{{\bf a}_i}}\Omega_{{\bf x}_i}.$$
 By  Lemma \ref{neigh}, the restriction of $\Psi_a$ to each $\Omega_{x_i}$ is injective.
For ${\bf a}\in 
B$, and $i$ such that ${\bf a}\in\Omega_{{\bf a}_i}$, let
$$\nu_{{\bf a},i}=(\Psi_{\bf a})_*\lambda_{[0,1]^k}\Big|_{\Omega_{{\bf x}_i}}.$$
Then by Lemma \ref{bound}, there exists $C_i$ such that  for all ${\bf a}\in \Omega_{{\bf a}_i}$,
$\nu_{{\bf a},i}$ is the sum of positive measures $g_{{\bf a},\Omega_{{\bf x}_i}}\lambda_{\bbr^k}$ and $\tau_{{\bf a},2,\Omega_{{\bf x}_i}}$ such that $\|g_{{\bf a},\Omega_{{\bf x}_i}}\|_{C^m}<C_i$ and $\tau_{{\bf a},2,\Omega_{{\bf x}_i}}(\bbr^k)<\e/2l$.
For ${\bf a}\in B$, let 
$$\tau_{{\bf a},1}=g_{\bf a}\lambda_{\bbr^k}:=\sum_{i:{\bf a}\in\Omega_{{\bf a}_i}}g_{{\bf a},\Omega_{{\bf x}_i}}\lambda_{\bbr^k},$$
and define
$\tau_{{\bf a},2}=(\Psi_{\bf a})_*\lambda_{[0,1]^k}-\tau_{{\bf a},1}.$  By construction $\|g_{\bf a}\|_{C^m}\leq\sum_{i=1}^l C_i$. Furthermore, by equation (\ref{more}) and the fact that $\tau_{{\bf a},2,\Omega_{x_i}}(\bbr^k)<\e/2l$ for each $i$ with ${\bf a}\in\Omega_{{\bf a}_i}$, we have that$\tau_{{\bf a},2}(\bbr^k)<\e$. 
\end{proof}
\section{Reformulation to a problem on $\bbr^k$}\label{three}
Recall the notation of Theorem \ref{mainthm}.  Let $\phi:[0,1]\ra\bbr^k$ be a totally non-planar $C^{m+1}$ function. Let $\mu_U=\mathpzc{u}_*\phi_*\lambda_{[0,1]}$, and $\mu_A$ be a  compactly supported asymptotically $U$-expanding measure on $A_{k_0}$. Let $\mu=\mu_A*(\mathpzc{u}_*\nu)$ and $\mu^{*n}$ be the $n$-fold convolution of $\mu$.

For a fixed $a\in A_{k_0}$, define $C_a\in A_k$ to be the unique diagonal matrix such that 
$a^{-1}\mathpzc{u}(x)a=\mathpzc{u}(C_a  x)$.
 Fix $n\in\bbn$ and let $f\in C_c(G)$ be given. Then
 \begin{eqnarray*}&&\int_{G}fd\mu^{*n}\\&=&\int_{A_{k_0}}\int_{\bbr^k}\dots\int_{A_{k_0}}\int_{\bbr^k}f\left(a_n u_n\ldots a_1  u_1\right) d\mu_U(u_1) d\mu_A(a_1)\dots d\mu_U(u_n) d\mu_A(a_n)\\
%&=&\int_{A_{k_0}^n}\int_{(\bbr^k)^n}f\left(\prod_{i=1}^na_i u \left(\prod_{i=1}^{n-1} C_{a_i}x_n+\prod_{i=1}^{n-2}C_{a_i}x_{n-1}+\dots+x_1\right) \right) d\nu^n(x_1,\dots,x_n)d\mu_A^n(a_1,\dots,a_n)\\
&=&\int_{A_{k_0}^n}\int_{[0,1]^n}f\left(\prod_{i=1}^na_i u \left(\prod_{i=1}^{n-1} C_{a_i}\phi(t_n)+\prod_{i=1}^{n-2} C_{a_i}\phi(t_{n-1})+\dots+\phi(t_1)\right) \right) d\lambda_{[0,1]^n}({\bf t})d\mu_{A^n}({\bf a})
\end{eqnarray*}
%Observe that for $f\in C_c(\bbr^k)$
%$$\int_{(\bbr^k)^n}f\left( \left(\prod_{i=1}^{n-1} C_{a_i}\right)x_n+\dots+x_1 \right) d\nu^k=\int_{\bbr^k}f(x) d\left(\left(\prod_{i=1}^{n-1} C_{a_i}\right)_*\nu*\dots*\nu\right)(x).$$
%understanding measures of the form $$\left(\prod_{i=1}^{n-1} C_{a_i}\right)_*\nu*\left(\prod_{i=1}^{n-2} C_{a_i}\right)_*\nu*\dots*\nu.$$
For $i\in\bbn$ define, the map $\theta_i:A_{k_0}^i\ra A_{k}$ as 
\begin{equation}\label{eqthetai}\theta_i(a_1,\dots,a_i)=\prod_{j=1}^i C_{a_j}.\end{equation}
Define the map $\theta:A_{k_0}^k\ra A_k^k$ as
$$\theta(a_1,\dots,a_k)=\big(\theta_{k-1}(a_1,\dots,a_{k-1}),\theta_{k-1}(a_1,\dots,a_{k-2}),\dots, \theta_1(a_1), e\big).$$
%Observe that the map $C_a:A_{k_0}\ra A_k$ is injective and hence there exists a well defined function $\xi: \theta(A_{k_0}^k)\ra A_{k_0}^{k-1}$ such that
%$$\xi\circ\theta(a_1,\dots,a_k)=(a_1,\dots,a_{k-1}).$$
Define the map $\Pi:A_{k_0}^k\ra A_{k_0}$ as
$$\Pi(a_1,\dots,a_k)=\prod_{i=1}^k a_i.$$
For ${\bf a}\in A_k^k$, define $\Phi_{\bf a}:[0,1]^k\ra\bbr^k$ as
$$\Phi_{\bf a}(x_1,\dots,x_k):=\sum_{i=1}^ka_i\phi(x_i).$$
Then for $f\in C_c(G)$
\begin{equation}\label{mudecomp}\int_Gfd\mu^{*k}=\int_{A_{k_0}^k}\int_{[0,1]^k}f\left(\Pi({\bf a}) u\left(\Phi_{\theta({\bf a})}({\bf x})\right)\right)d\lambda_{[0,1]^k}({\bf x})d\mu_A^k({\bf a}).\end{equation}
For $n\in N$, define the map $\Pi_n:(A_{k_0}^k)^n\ra A_0$ as
$$\Pi_n({\bf a}_1,\dots,{\bf a}_n)=\prod_{i=1}^n \Pi({\bf a}_i).$$
With the above notation, for $n\in\bbn$ 
$$\int_Xfd\mu^{*k n}=\int_{(A_{k_0}^k)^n}\int_{([0,1]^k)^n}f\left(\Pi_n({\bf a}) u\left(\sum_{i=1}^n
\theta_{k(i-1)}({\bf a})\Phi_{\theta({\bf a}_i)}(x_i)\right)\right)d(\lambda_{[0,1]^k})^n({\bf x})d(\mu_A^k)^n({\bf a})$$
 For ${\bf a}\in A_{k_0}^{k }$, define 
$$\nu_{\bf a}:={\Phi_{\theta({\bf a})}}_*\lambda_{[0,1]^k},$$
and for ${\overline{\bf a}}=({\bf a}_1,\dots,{\bf a}_n)\in(A_{k_0}^k)^n$, 
define 
\begin{equation}\label{nu}\nu_{{\overline{\bf a}}}:=({\theta_{k(n-1)}}({\overline{\bf a}})_*\nu_{{\bf a}_n})*({\theta_{k(n-2)}}({\overline{\bf a}})_*\nu_{{\bf a}_{n-1}})*\dots*\nu_{{\bf a}_1},\end{equation}
where if ${\overline{\bf a}}=((a_{1,1},\dots,a_{1,k}),\dots,(a_{n,1},\dots,a_{n,k}))$, then $${\theta_{k(i)}}({\overline{\bf a}})={\theta_{ki}}(a_{1,1},a_{1,2},\dots,a_{i,k}).$$
Then 
\begin{equation}\label{measures}\int_Gfd\mu^{*k n}=\int_{(A_{k_0}^k)^n}\int_{\bbr^k}f\left(\Pi_n({\overline{\bf a}}) u\left({\bf x}\right)\right)d\nu_{\overline{\bf a}}({\bf x})d(\mu_A^k)^n({\overline{\bf a}}).\end{equation}
We will understand the measure $\mu^{*n}$ by  understanding the measures $\nu_{{\overline{\bf a}}}$ for ${\overline{\bf a}}\in (A_{k_0}^k)^n$ for large $n\in\bbn$. 
\begin{lemma}\label{norm}Let $\mu_1$ and $\mu_2$ be measures on $\bbr^k$. Suppose $\mu_1=g\lambda_{\bbr^k}$ where $g\in C^m$ with $\|g\|_{C^m}<\infty$, and $\mu_2(\bbr^k)=\delta<\infty$. Then $\mu_1*\mu_2=(g*\mu_2)\lambda_{\bbr^k}$ (here $g*\mu_2(x)=\int g(x-y) d\mu_2(y)$), and 
$$\|g*\mu_2\|_{C^m}\leq\delta\|g\|_{C^m}.$$
\end{lemma}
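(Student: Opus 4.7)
The plan is to verify the identity $\mu_1*\mu_2=(g*\mu_2)\lambda_{\bbr^k}$ by a direct Fubini computation against a test function, and then bound the $C^m$ norm of $g*\mu_2$ by differentiating under the integral sign.

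First, let me check the identity. Given any $f\in C_c(\bbr^k)$, by definition of convolution of measures and the hypothesis $\mu_1=g\lambda_{\bbr^k}$,
\begin{equation*}
\int f\,d(\mu_1*\mu_2)=\int\int f(x+y)\,g(x)\,dx\,d\mu_2(y).
\end{equation*}
Since $g$ is bounded and $f$ has compact support, the integrand is $\mu_2\otimes\lambda_{\bbr^k}$-integrable and I can apply Fubini. Substituting $z=x+y$ in the inner integral (for each fixed $y$) yields
\begin{equation*}
\int f\,d(\mu_1*\mu_2)=\int f(z)\left(\int g(z-y)\,d\mu_2(y)\right)dz=\int f(z)(g*\mu_2)(z)\,dz,
\end{equation*}
which is the claimed identity.

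For the norm bound, the key tool is differentiation under the integral. For any multi-index $\alpha$ with $|\alpha|\leq m$, the partial derivative $g^\alpha$ is continuous and bounded by $\|g\|_{C^m}$, and $\mu_2$ is a finite measure. Hence the dominated convergence theorem allows me to pull derivatives inside the integral defining $g*\mu_2$:
\begin{equation*}
(g*\mu_2)^\alpha(x)=\int g^\alpha(x-y)\,d\mu_2(y).
\end{equation*}
Taking absolute values and using $|g^\alpha(x-y)|\leq\|g\|_{C^m}$ pointwise gives
\begin{equation*}
|(g*\mu_2)^\alpha(x)|\leq\int\|g\|_{C^m}\,d\mu_2(y)=\delta\,\|g\|_{C^m}.
\end{equation*}
Taking the supremum over $x\in\bbr^k$ and over multi-indices $\alpha$ with $|\alpha|\leq m$ gives $\|g*\mu_2\|_{C^m}\leq\delta\|g\|_{C^m}$, as desired.

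There is no real obstacle here; both steps are routine, with the only mild care being the justification of Fubini and differentiation under the integral, which follow immediately from boundedness of $g$ together with $g^\alpha$ and the finiteness of $\mu_2$.
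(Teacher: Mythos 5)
Your proof is correct and follows essentially the same route as the paper: the measure identity via Fubini and the change of variables $z=x+y$, then differentiation under the integral (justified by dominated convergence and boundedness of $g^\alpha$ together with finiteness of $\mu_2$) to get $\|g*\mu_2\|_{C^m}\leq\delta\|g\|_{C^m}$. No issues.
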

\begin{proof} Let $f\in C_c(\bbr^k)$ be given. Then 
\begin{eqnarray*}
\int f d(\mu_1*\mu_2)&=&\int\int f(x+y)d\mu_1(x)d\mu_2(y)\\
&=&\int\int f(x+y)g(x)d\lambda_{\bbr^k}(x)d\mu_2(y)\\
&=&\int f(x)\int g(x-y)d\mu_2(y)d\lambda_{\bbr^k}(x).
\end{eqnarray*} Let $|\alpha|\leq m$. Then by dominated convergence theorem,
\begin{eqnarray*}
\left\|\left(\int g(x-y)\mu_2(y)\right)^\alpha\right\|_\infty&=&\left\|\int g^\alpha(x-y)d\mu_2(y)\right\|_\infty\\
&\leq&\int \left\|g^\alpha(x)\right\|_\infty d\mu_2(y)\\
&=&\delta \left\|g^\alpha(x)\right\|_\infty.
\end{eqnarray*}
\end{proof}

%Observe that $\mu_A^n$ is a compactly supported measure on $A_{k_0}^n$. Thus there exist $M_1<1<M_2$ such that if $a$ is in the support of $\mu_A^n$, then $\theta_n(a)$ is a diagonal matrix in $A_k$ with diagonal entries in $ [M_1^n,M_2^n]$.

\begin{lemma}\label{main} There exist $\eta>0$ and $C_1>0$ such that for all $n\in\bbn$, if ${\overline{\bf a}}\in (\supp(\mu_A)^k)^n$, then
there exist positive measures $\nu_{1,{\overline{\bf a}}}$ and $\nu_{2,{\overline{\bf a}}}$ on $\bbr^k$ such that 
\begin{enumerate}
\item $\nu_{{\overline{\bf a}}}=\nu_{1,{\overline{\bf a}}}+\nu_{2,{\overline{\bf a}}}$
\item $\nu_{1,{\overline{\bf a}}}=g_{\overline{\bf a}}\lambda_{\bbr^k}$ and $\|g_{\overline{\bf a}}\|_{C^m}<C_1$
\item\label{item3} $\nu_{2,{\overline{\bf a}}}(\bbr^k)<e^{-\eta n}$.
\end{enumerate}
\end{lemma}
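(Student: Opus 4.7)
The plan is to decompose $\nu_{\overline{\bf a}} = \mu_n * \mu_{n-1} * \cdots * \mu_1$, where $\mu_j := (T_j)_* \nu_{{\bf a}_j}$ with $T_j := \theta_{k(j-1)}({\overline{\bf a}})$ (and $T_1 = e$), by an iterative telescoping argument built on Corollary~\ref{decomp}.

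First, for each $j$, since ${\bf a}_j$ lies in the fixed compact set $\supp(\mu_A)^k$, Corollary~\ref{decomp} applied uniformly in ${\bf a}_j$ yields, for any chosen $\e>0$, a decomposition $\nu_{{\bf a}_j} = \tau_{1,j}^0 + \tau_{2,j}^0$ with $\tau_{1,j}^0 = g_{{\bf a}_j}\lambda_{\bbr^k}$, $\|g_{{\bf a}_j}\|_{C^m} \leq C_0$, and $\tau_{2,j}^0(\bbr^k) < \e$. Pushing forward by $T_j$ gives $\mu_j = \sigma_{1,j} + \sigma_{2,j}$ with $\sigma_{i,j} := (T_j)_* \tau_{i,j}^0$; the bad mass is preserved, $\sigma_{2,j}(\bbr^k)<\e$, while the density of $\sigma_{1,j}$ is $g_{{\bf a}_j}(T_j^{-1}\cdot)/|\det T_j|$. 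Since $T_j$ is a product of at most $k(j-1)$ matrices $C_a$ with $a\in\supp(\mu_A)$ compact, both $\|T_j^{-1}\|$ and $|\det T_j|^{-1}$ grow at most exponentially in $j$, and a direct chain-rule estimate gives
$$
\|\sigma_{1,j}\|_{C^m} \leq C_0 e^{\beta j}
$$
for some $\beta > 0$ depending only on $m$, $k$, and $\supp(\mu_A)$.

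Next, I would telescope: substituting $\mu_j = \sigma_{1,j} + \sigma_{2,j}$ into $\nu_{{\overline{\bf a}}}$ and peeling off layers starting with $j=1$, set
$$
G_j := \mu_n * \cdots * \mu_{j+1} * \sigma_{1,j} * \sigma_{2,j-1} * \cdots * \sigma_{2,1}, \qquad B_n := \sigma_{2,n} * \cdots * \sigma_{2,1},
$$
so that $\nu_{{\overline{\bf a}}} = \sum_{j=1}^n G_j + B_n$. Iterated application of Lemma~\ref{norm}, together with the fact that every $\mu_l$ is a probability measure and each $\sigma_{2,l}$ has mass $<\e$, gives $\|G_j\|_{C^m} \leq \|\sigma_{1,j}\|_{C^m}\, \e^{j-1} \leq C_0 e^{\beta j} \e^{j-1}$ and $B_n(\bbr^k) \leq \e^n$.

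Finally, fix $\e < e^{-\beta}$. Then $\|\sum_j G_j\|_{C^m} \leq \sum_{j=1}^\infty C_0 e^{\beta j}\e^{j-1} \leq C_0 e^{\beta}/(1-\e e^\beta) =: C_1$, independent of $n$, and $B_n(\bbr^k) \leq \e^n = e^{-\eta n}$ with $\eta := -\log\e > 0$. Setting $\nu_{1,{\overline{\bf a}}} := \sum_j G_j$ and $\nu_{2,{\overline{\bf a}}} := B_n$ delivers items (1)--(3). The main obstacle is verifying $\|\sigma_{1,j}\|_{C^m} \leq C_0 e^{\beta j}$: one must quantify the $C^m$-blow-up of a fixed density under pushforward by a diagonal matrix whose entries range over $[e^{-cj},e^{cj}]$. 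The chain rule yields the claimed bound, but it is precisely this exponential blow-up that forces the choice of $\e$ small enough to beat it, and this trade-off determines the final decay rate $\eta$.
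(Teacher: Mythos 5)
Your proposal is correct and follows essentially the same route as the paper: apply Corollary~\ref{decomp} uniformly on the compact set $\supp(\mu_A)^k$, telescope $\nu_{\overline{\bf a}}$ into terms with the absolutely continuous piece at position $j$ and the small-mass pieces below, bound the exponential $C^m$ blow-up of the pushforward density by the diagonal matrices $\theta_{k(j-1)}({\overline{\bf a}})$, and beat it by choosing $\e$ small (the paper's condition $\e M_1^{-(km+k^2)}<1$ is exactly your $\e<e^{-\beta}$). The only cosmetic difference is that the paper fixes $\e$ before invoking Corollary~\ref{decomp}, which avoids even the appearance of circularity since the constant from that corollary depends on $\e$; your ordering is harmless because $\beta$ does not depend on $\e$.
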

\begin{proof}Let $n\in\bbn$ be given. As $\mu_A$ was compactly supported and the map $a\mapsto C_a$ is continuous, there exist $M_1<1<M_2$ such that if $a$ is in the support of $\mu_A^n$, then $\theta_n(a)$ is a diagonal matrix in $A_k$ with diagonal entries in $ [M_1^n,M_2^n]$.

Let $\e>0$ be such that $\e*M_1^{-(km+k^2)}<1$. Let $B=\supp(\mu_A)^k$. Then  since $C_a$ is a 
continuous function of $A$ and $\mu_A$ was compactly supported,  $\theta(B)\sbq A^k_k$ is a 
compact set.  By applying Corollary \ref{decomp}   to $\theta(B)$ with this choice of $\e$ there exist
a constant $C$ and positive measures  $\tau_{1,\theta({\bf a})}$, and $\tau_{2,\theta({\bf a})}$ for every ${\bf a}\in 
B$,  such that 
$\nu_{{\bf a}}=\tau_{1,\theta({\bf a})}+\tau_{2,\theta({\bf a})}$ where $\tau_{1,\theta({\bf a})}=g_{{\bf a}_i}\lambda_{\bbr^k}$,  $\|g_{{\bf a}}\|_{C^m}<C$, and $\tau_{2,\theta({\bf a})}(\bbr^k)<\e$.  

Let ${\overline{\bf a}}\in \theta(B)^n$. Then
$$\theta_{k(i-1)}({\overline{\bf a}})_*\nu_{{\bf a}_i}=\theta_{k(i-1)}({\overline{\bf a}})_*\tau_{1,{\bf a}_i}+\theta_{k(i-1)}({\overline{\bf a}})_*\tau_{2,{\bf a}_i}.$$
Again, $\theta_{k(i-1)}({\overline{\bf a}})_*\tau_{2,{\bf a}_i}(\bbr^k)<\e$, and  for every $f\in C_{c}(\bbr^k)$,
\begin{eqnarray*}
\int f({\bf x}) d\left(\theta_{k(i-1)}({\overline{\bf a}})_*\tau_{1,{\bf a}_i}\right)({\bf x})&=&\int f\big(\theta_{k(i-1)}({\overline{\bf a}}) x\big)g_{{\bf a}_i}(x) \,d\lambda_{\bbr^k}(x)\\
&=&\det\left(\theta_{k(i-1)}({\overline{\bf a}})^{-1}\right)\int f(x) g_{{\bf a}_i}\big(\theta_{k(i-1)}({\bf a})^{-1} x\big) d\lambda_{\bbr^k}\\
\end{eqnarray*}
Thus 
\begin{equation}\label{acont}\theta_{k(i-1)}({\overline{\bf a}})_*\tau_{1,{\bf a}_i}=\det\left(\theta_{k(i-1)}({\overline{\bf a}})^{-1}\right) g_{{\bf a}_i}\big(\theta_{k(i-1)}({\overline{\bf a}})^{-1} x\big)\lambda_{\bbr^k}.\end{equation}

Recall that $\theta_n({\overline{\bf a}})$ is a diagonal matrix in $A_k$ with diagonal entries in $ [M_1^n,M_2^n]$ So, 
$$\det\left(\theta_{k(i-1)}({\overline{\bf a}})^{-1}\right)\leq M_1^{-k^2 (i-1)}.$$
Furthermore, the operator norm of $\theta_{k(i-1)}({\overline{\bf a}})^{-1}$ is at most $M_1^{-k(i-1)}$, so for a multi-index $\al$ such that $|\al|=l\leq m$,

\begin{equation*}\Big\| g_{{\bf a}_i}^\al\big(\theta_{k(i-1)}({\overline{\bf a}})^{-1} x\big)\Big\|_{\infty}\leq M_1^{-(i-1)(k l)}\|g_{{\bf a}_i}\|_{C^m}.\end{equation*}
Combining these equations, we have that
\begin{equation}\label{growth1}\Big\|\det\left(\theta_{k(i-1)}({\overline{\bf a}})^{-1}\right) g_{{\bf a}_i}\big(\theta_{k(i-1)}({\overline{\bf a}})^{-1} x\big)\Big\|_{C^m}\leq M_1^{-(i-1)(km+k^2)}\|g_{{\bf a}_i}\|_{C^m}.\end{equation}

Now
\begin{eqnarray}\label{stru}
\nu_{\overline{\bf a}}&=&\big({\theta_{k(n-1)}}({\overline{\bf a}})_*\nu_{{\bf a}_n}\big)*\dots*\nu_{{\bf a}_1}\\
&=&\big({\theta_{k(n-1)}}({\overline{\bf a}})_*\nu_{{\bf a}_n}\big)*\dots*\big(\tau_{1,\theta({\bf a}_1)}+\tau_{2,\theta({\bf a}_1)}\big)\nonumber\\
&=&\big({\theta_{k(n-1)}}({\overline{\bf a}})_*\nu_{{\bf a}_n}\big)*\dots*\big({\theta_{k}}({\overline{\bf a}})_*\nu_{{\bf a}_2}\big)*\tau_{1,\theta({\bf a}_1)}\nonumber\\
&&+\big({\theta_{k(n-1)}}({\overline{\bf a}})_*\nu_{{\bf a}_n}\big)*\dots*\big({\theta_{2k}}({\overline{\bf a}})_*\nu_{{\bf a}_3}\big)*\big(\theta_{k}({\overline{\bf a}})_*\tau_{1,\theta({\bf a}_2)}\big)*\tau_{2,\theta({\bf a}_1)}\nonumber\\
&&+\big({\theta_{k(n-1)}}({\overline{\bf a}})_*\nu_{{\bf a}_n}\big)*\dots*\big({\theta_{3k}}({\overline{\bf a}})_*\nu_{{\bf a}_4}\big)*\big(\theta_{2k}({\overline{\bf a}})_*\tau_{1,\theta({\bf a}_3)}\big)*\big(\theta_{k}({\overline{\bf a}})_*\tau_{2,\theta({\bf a}_2)}\big)*\tau_{2,\theta({\bf a}_1)}\nonumber\\
&&\vdots\nonumber\\
&&+\big({\theta_{(n-1)k}}({\overline{\bf a}})_*\nu_{{\bf a}_n}\big)*\big(\theta_{(n-2)k}({\overline{\bf a}})_*\tau_{1,\theta({\bf a}_{n-1})}\big)*\big(\theta_{(n-3)k}({\overline{\bf a}})_*\tau_{2,\theta({\bf a}_{n-2})}\big)*\dots*\tau_{2,\theta({\bf a}_1)}\nonumber\\
&&+\big(\theta_{(n-1)k}({\overline{\bf a}})_*\tau_{1,\theta({\bf a}_{n})}\big)*\big(\theta_{(n-2)k}({\overline{\bf a}})_*\tau_{2,\theta({\bf a}_{n-1})}\big)*\dots*\tau_{2,\theta({\bf a}_1)}\nonumber\\
&&+\big(\theta_{(n-1)k}({\bf a})_*\tau_{2,\theta({\bf a}_{n})}\big)*\dots*\tau_{2,\theta({\bf a}_1)}\nonumber
\end{eqnarray}
Define $\nu_{2,{\overline{\bf a}}}=\big(\theta_{(n-1)k}({\overline{\bf a}})_*\tau_{2,\theta({\bf a}_{n})}\big)*\dots*\tau_{2,\theta({\bf a}_1)}$, and $\nu_{1,{\overline{\bf a}}}=\nu_{\overline{\bf a}}-\nu_{2,{\overline{\bf a}}}.$ By construction $\nu_{2,{\overline{\bf a}}}(\bbr^k)<\e^n$. Thus \ref{item3}. holds with $\eta=-\log(\e)$.
By (\ref{stru}), 
$$\nu_{1,{\overline{\bf a}}}=\sum_{i=1}^n\big({\theta_{k(n-1)}}({\overline{\bf a}})_*\nu_{{\bf a}_n}\big)*\dots*\big({\theta_{ik}}({\overline{\bf a}})_*\nu_{{\bf a}_{i+1}}\big)*\big(\theta_{(i-1)k}({\overline{\bf a}})_*\tau_{1,\theta({\bf a}_i)}\big)*\big(\theta_{(i-2)k}({\overline{\bf a}})_*\tau_{2,\theta({\bf a}_{i-1})}\big)*\dots*\tau_{2,\theta({\bf a}_1)}.$$
By (\ref{acont}),  (\ref{growth1}), and Lemma \ref{norm}, there exist functions $g_1,\dots,g_n\in C^l$ such that for $i=1,\dots n$, 
$$\big({\theta_{k(n-1)}}({\overline{\bf a}})_*\nu_{{\bf a}_n}\big)*\dots*\big({\theta_{ik}}({\overline{\bf a}})_*\nu_{{\bf a}_{i+1}}\big)*\big(\theta_{(i-1)k}({\overline{\bf a}})_*\tau_{1,\theta({\bf a}_i)}\big)*\big(\theta_{(i-2)k}({\overline{\bf a}})_*\tau_{2,\theta({\bf a}_{i-1})}\big)*\dots*\tau_{2,\theta({\bf a}_1)}=g_i\lambda_{\bbr^k},$$
and
$$\|g_i\|_{C^m}\leq M_1^{-(i-1)(km+k^2)}C\epsilon^{i-1}.$$
Observe that $g_{\overline{\bf a}}=\sum_{i=1}^ng_i$ is the Radon-Nikodym derivative of $\nu_{1,{\overline{\bf a}}}$, and as we chose $\e$ to satisfy $\epsilon M_1^{-(km+k^2)}<1$, we have
\begin{eqnarray}\label{CMbound}
\|g_{\overline{\bf a}}\|_{C^m}&\leq&\sum_{i=1}^n\|g_i\|_{C^m}\\
&\leq&\sum_{i=1}^nM_1^{-(i-1)(km+k^2)}C\epsilon^{i-1}\nonumber\\
&<&\f{C}{1-\e M_1^{-(km+k^2)}.}\nonumber
\end{eqnarray}
\end{proof}
\section{Mixing of Diagonal Action}\label{sec4}
For ${\bf{t}}=(t_1,\dots,t_{k_0})\in \bbr^{k_0}$ such that $$t_1,\dots,t_{k_0}\in\bbr\qquad\text{ and }\qquad\sum_{i=1}^{k_1} t_i=\sum_{j=1}^{k_2}t_{k_1+j},$$
define $$a_{\bf{t}}:=\diag(e^{t_1},\dots,e^{t_{k_1}},e^{-t_{k_1+1}},\dots,e^{-k_0}),$$
$$\lfloor{\bf{t}}\rfloor:=\min_{i=1,\dots,k_0}t_i,$$
and 
$$\llfloor {\bf{t}}\rrfloor:=\min_{\substack{i\in\{1,\dots,k_1\}\\j\in\{k_1+1,\dots,k_0\}}}t_i+t_j.$$
Fix a right-invariant metric $dist$ on $G$, giving rise to the corresponding metric on $X$. Let $H$ be a subgroup of $G$, and for $r>0$, let $B_H(r)$ denote the open ball centered at $e$ of radius $r$ in $H$ according to this metric. For a function $\psi$ on $X$ let
$$\|\psi\|_{\text{Lip}}:=\sup_{x,y\in X, x\neq y }\f{|\psi(x)-\psi(y)|}{dist(x,y)}.$$
With this notation Kleinbock and Margulis prove the following.
\begin{theorem}[\cite{KM} Theorem 1.3]\label{TKM} There exists ${\gamma}$ such that for any $f\in C^\infty_{\text{comp}}(U),$ $\psi
\in C^\infty_{\text{comp}}(X)$ and any compact $L\sbq X$, there exists $C=C(f,\psi,L)$ such that for all $z\in L$ and all ${\bf t}\in
\bbr^k$,
\begin{equation}\label{orig}\left|\int_U f(u)\psi(a_{\bf{t}}tuz)d\lambda_{U}-\int_Uf \int_X\psi\right|\leq Ce^{-\gamma \lfloor\bf{t}\rfloor}.\end{equation}
\end{theorem}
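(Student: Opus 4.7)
The plan is to derive this bound from exponential decay of matrix coefficients of the regular representation of $G=\Sl(k_0,\bbr)$ on $L^2_0(X):=\{\psi\in L^2(X):\int_X\psi\,d\lambda_X=0\}$, combined with a thickening (``banana'') argument of Margulis. The representation-theoretic input is that there exist constants $\gamma>0$ and $m\in\bbn$ depending only on $G$ such that for all sufficiently smooth $\phi,\psi\in L^2_0(X)$,
\begin{equation}\label{tkmdecay}
\left|\langle a_{\bf t}\phi,\psi\rangle_{L^2(X)}\right|\leq C\,\|\phi\|_{C^m}\,\|\psi\|_{C^m}\,e^{-\gamma\lfloor{\bf t}\rfloor}.
\end{equation}
For $k_0\geq 3$ this follows from Kazhdan's property (T) for $\Sl(k_0,\bbr)$ together with Sobolev estimates converting $K$-finite decay of matrix coefficients into decay on all smooth vectors, uniform in the direction of ${\bf t}$ across the positive Weyl chamber; for $k_0=2$ a similar bound is available with the rate depending on the spectral gap of $\Gamma$.

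Next I would carry out the thickening step. Fix the compact set $L\sbq X$ and, by a partition-of-unity argument, reduce to the case where $z$ lies in a small injectivity chart. Use the local product decomposition $G=U\cdot W$ near $e$, where $W$ is a transversal built from the contracting horospherical subgroup $U^-$ (block-lower-triangular unipotent) and the centralizer of $\{a_{\bf t}\}$. Pick a nonnegative bump $\rho\in C_c^\infty(W)$ concentrated near $e$ with $\int_W\rho\,d\lambda_W=1$, and define $\Phi\in C_c^\infty(X)$ by $\Phi(uw\cdot z)=f(u)\rho(w)$ for $(u,w)$ in the chart, extended by zero. Since $\lambda_G$ disintegrates locally as $d\lambda_U\,d\lambda_W$, Fubini gives
\begin{equation}\label{tkmfubini}
\int_U f(u)\psi(a_{\bf t} u z)\,d\lambda_U(u)=\int_X \Phi(y)\,\psi(a_{\bf t} y)\,d\lambda_X(y)+\mathcal{E}({\bf t}),
\end{equation}
where $\mathcal{E}({\bf t})$ is the error from replacing $\psi(a_{\bf t} uz)$ with the average $\int_W \rho(w)\psi(a_{\bf t} uw z)\,d\lambda_W(w)$. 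Because $a_{\bf t}$ contracts (or fixes) $W$, and $\psi$ is Lipschitz, $|\mathcal{E}({\bf t})|$ is bounded by a constant times $\|\psi\|_{\text{Lip}}$ times the $W$-diameter of $\supp\rho$ (in fact, by choosing $\rho$ appropriately, $\mathcal{E}({\bf t})$ itself is $O(e^{-\gamma'\lfloor{\bf t}\rfloor})$).

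To finish, split $\psi=\langle\psi\rangle+\psi_0$ with $\langle\psi\rangle:=\int_X\psi\,d\lambda_X$ and $\psi_0\in L^2_0(X)$. The contribution of $\langle\psi\rangle$ to the right-hand side of \eqref{tkmfubini} is exactly $\langle\psi\rangle\cdot\int_X\Phi\,d\lambda_X=\int_X\psi\cdot\int_U f$, which is the main term. The remaining piece is the matrix coefficient
$$\int_X\Phi(y)\,\psi_0(a_{\bf t}y)\,d\lambda_X(y)=\langle a_{\bf t}^{-1}\psi_0,\Phi\rangle_{L^2(X)},$$
which by \eqref{tkmdecay} is bounded by $C\|\Phi\|_{C^m}\|\psi_0\|_{C^m}e^{-\gamma\lfloor{\bf t}\rfloor}$. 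Since $\|\Phi\|_{C^m}$ depends only on $\|f\|_{C^m}$, the fixed choice of $\rho$, and the chart geometry around $L$, and since $\|\psi_0\|_{C^m}\leq 2\|\psi\|_{C^m}$, combining the two estimates yields the claimed bound with $C=C(f,\psi,L)$.

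The main obstacle is establishing the quantitative spectral-gap estimate \eqref{tkmdecay} with explicit $C^m$-dependence: this is the genuinely representation-theoretic core of the result, requiring the classification of irreducible unitary representations of $\Sl(k_0,\bbr)$ together with Sobolev-norm estimates relating decay of $K$-finite matrix coefficients to decay on smooth vectors uniformly across the Weyl chamber. The thickening step is elementary but must be executed so that $\mathcal{E}({\bf t})$ and the $C^m$-norm of $\Phi$ are controlled by $f$, $\psi$, and $L$ independently of ${\bf t}$.
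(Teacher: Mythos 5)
You should first note that the paper does not supply a proof of this statement at all: it is quoted verbatim as Theorem~1.3 of \cite{KM}, and the paper's own work begins only with the strengthening in Theorem~\ref{mix}, which is deduced from Lemma~2.3 of \cite{KM}. Your outline follows the same route that \cite{KM} itself takes (exponential decay of matrix coefficients on $L^2_0(X)$ plus a Margulis-type thickening of the $U$-orbit), so the strategy is the correct one, and your representation-theoretic input (uniform exponential decay for smooth vectors, via property (T) when $k_0\ge 3$ and via the spectral gap of $\Gamma$ when $k_0=2$) is the standard ingredient.

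The genuine gap is in the thickening step, where your two claims are incompatible. The transversal $W$ necessarily contains the centralizer of $\{a_{\bf t}\}$, and those directions are merely fixed, not contracted, by $a_{\bf t}$; hence for a \emph{fixed} bump $\rho$ with support of size $r$ the error satisfies only $|\mathcal{E}({\bf t})|\le C\,\|\psi\|_{\text{Lip}}\,r\int_U|f|$, uniformly in ${\bf t}$, and it does not decay as $\lfloor{\bf t}\rfloor\to\infty$. Your parenthetical assertion that a suitable choice of $\rho$ makes $\mathcal{E}({\bf t})=O(e^{-\gamma'\lfloor{\bf t}\rfloor})$ can only be realized by shrinking $r$ with ${\bf t}$, but then $\|\Phi\|_{C^m}$ blows up like a negative power of $r$ (in the Sobolev bookkeeping of \cite{KM} like $r^{-(2m+N/2)}$), contradicting your final claim that $\|\Phi\|_{C^m}$ is independent of ${\bf t}$. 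As written, your argument yields only a bound of the form $C\bigl(r+e^{-\gamma\lfloor{\bf t}\rfloor}\bigr)$ for each fixed $r$, which is not the theorem. The missing idea is precisely the two-term estimate of \cite{KM} Lemma~2.3, namely a bound of the shape $E\bigl(r\int_U|f|+r^{-(2m+N/2)}\|f\|_{C^m}e^{-\gamma\lfloor{\bf t}\rfloor}\bigr)$, followed by optimizing $r$ as an exponentially small function of $\lfloor{\bf t}\rfloor$; this balancing produces the stated inequality with a strictly smaller exponent. Once that step is added, the rest of your outline (splitting $\psi$ into its mean and a mean-zero part, and controlling the chart geometry uniformly over $z\in L$) goes through as in \cite{KM}.
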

This theorem almost suffices to prove the result, but we require a stronger understanding of the constant $C$ in the above theorem and the flexibility to pick $f\in C^m_{\text{comp}}(U)$ for a fixed $m$ depending on $G$.
We achieve this through the following stronger version of Theorem  1.3 of \cite{KM}, which follows from Lemma 2.3 of their paper, convolving with an approximate identity, and a small additional argument, which we have provided below.  
 \begin{theorem}[\cite{KM}]\label{mix} There exist ${\gamma}$ and $m\in\bbn$  such that for any $C_1, C_2$, and $f\in C^m_{\text{comp}}(U)$ such that $\int|f|<C_2$ and $\|f\|_{C^m}<C_1$, then for
  $\psi
\in C^\infty_{\text{comp}}(X)$ and any compact $L\sbq X$, there exists $C=C(C_1,C_2,\|\psi\|_{C^m},\|\psi\|_{\text{Lip}},L)$ such that for all $z\in L$ and all ${\bf t}\in
\bbr^k$,
$$\left|\int_U f(u)\psi(a_{\bf{t}}tuz)d\lambda_{U}-\int_Uf \int_X\psi\right|\leq Ce^{-\gamma \llfloor\bf{t}\rrfloor}.$$
 \end{theorem}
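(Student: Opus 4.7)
The plan is to derive Theorem~\ref{mix} from Lemma~2.3 of \cite{KM}, which I take to be the quantitative form of Theorem~\ref{TKM} with constants expressed explicitly in terms of $C^l$-norms of the test functions: there exist $l\in\bbn$ and $\gamma>0$ depending only on $G$ such that for any $R>0$, $f\in C^\infty_{\text{comp}}(U)$ with $\supp f\subseteq B_U(R)$, $\psi\in C^\infty_{\text{comp}}(X)$, and $z\in L$,
\begin{equation*}
\left|\int_U f(u)\,\psi(a_{\bf t} u z)\,d\lambda_U-\int_U f\int_X\psi\right|\leq C(L,R)\,\|f\|_{C^l}\,\|\psi\|_{C^l}\,e^{-\gamma\llfloor{\bf t}\rrfloor}.
\end{equation*}
The refined exponent $\llfloor{\bf t}\rrfloor$ rather than $\lfloor{\bf t}\rfloor$ arises because the matrix coefficient decay along the horospherical direction is controlled by the slowest eigenvalue of $\Ad(a_{\bf t})$ restricted to $\Lie(U)$, which is $\min_{i\leq k_1,\,j>k_1}(t_i+t_j)$; tracking the Howe--Moore/spectral gap input in the proof of Theorem~\ref{TKM} only along the $U$-directions produces this. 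This is the ``small additional argument'' alluded to in the text.

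The second ingredient handles the fact that Theorem~\ref{mix} assumes only $f\in C^m$, not $f\in C^\infty$. Fix an approximate identity $\rho_\epsilon\in C^\infty_{\text{comp}}(U)$ supported in $B_U(\epsilon)$ with $\int\rho_\epsilon=1$ and $\|\rho_\epsilon\|_{C^j}\leq C_j\,\epsilon^{-(\dim U+j)}$, and let $f_\epsilon:=f*\rho_\epsilon\in C^\infty_{\text{comp}}(U)$. Standard convolution estimates give $\|f-f_\epsilon\|_{L^1}\leq C\,\epsilon\,\|f\|_{C^1}\cdot|\supp f|$ and, by transferring the excess derivatives onto $\rho_\epsilon$, $\|f_\epsilon\|_{C^l}\leq C\,\|f\|_{C^m}\,\epsilon^{-(l-m+\dim U)}$ for $l\geq m$. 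Writing
\begin{equation*}
\int_U f\,\psi(a_{\bf t}\cdot z)-\int_U f\int_X\psi \;=\; \int_U (f-f_\epsilon)\left[\psi(a_{\bf t}\cdot z)-\int_X\psi\right]+\int_U f_\epsilon\,\psi(a_{\bf t}\cdot z)-\int_U f_\epsilon\int_X\psi,
\end{equation*}
the first summand is bounded by $2\|f-f_\epsilon\|_{L^1}\|\psi\|_\infty\leq C\,\epsilon\,C_1\|\psi\|_\infty$, and the second by the Sobolev mixing estimate applied to $f_\epsilon$, giving at most $C(L,R)\,C_1\,\epsilon^{-(l-m+\dim U)}\,\|\psi\|_{C^l}\,e^{-\gamma\llfloor{\bf t}\rrfloor}$. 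Choosing $\epsilon:=e^{-\gamma\llfloor{\bf t}\rrfloor/(l-m+\dim U+1)}$ balances these contributions into a single bound $C\,e^{-\gamma'\llfloor{\bf t}\rrfloor}$ for an appropriate $\gamma'>0$ and $m$ large enough to be fixed in advance.

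The main obstacle I anticipate is packaging the constant $C$ to depend only on $(C_1,C_2,\|\psi\|_{C^m},\|\psi\|_{\text{Lip}},L)$ as advertised. First, $C(L,R)$ from Lemma~2.3 depends on the support radius $R$ of $f$, which is absent from the dependence list; in the intended application (Lemma~\ref{main}) $f=g_{\overline{\bf a}}$ has support uniformly bounded by the compact supports of $\mu_A$ and $\mu_U$, so $R$ may be fixed once and for all, and the $\int|f|<C_2$ hypothesis serves as a trivial fall-back bound $\|f\|_{L^1}\leq C_2$ in the regime where the exponential estimate is uninformative. Second, reducing the $\|\psi\|_{C^l}$ dependence to $\|\psi\|_{C^m}$ together with $\|\psi\|_{\text{Lip}}$ requires a symmetric mollification of $\psi$ on $X$, where $\|\psi\|_{\text{Lip}}$ controls the short-range error of the mollification and $\|\psi\|_{C^m}$ controls the derivative blow-up of the smoothed function; balancing both mollifications simultaneously yields the stated estimate.
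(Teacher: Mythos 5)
Your overall strategy (quote a quantitative form of \cite{KM} Lemma 2.3, then mollify to pass from smooth to $C^m$ test functions) is in the right spirit, but two of your key steps are assertions rather than arguments, and they diverge from what the paper actually does. First, the refined exponent: you claim $\llfloor{\bf t}\rrfloor$ in place of $\lfloor{\bf t}\rfloor$ ``arises'' from tracking the spectral-gap input only along the $U$-directions inside the proof of Theorem \ref{TKM}; that would require reworking the internals of \cite{KM}, which you do not do. The paper's ``small additional argument'' is soft and external: once one knows the constant depends on $\psi$ only through $\|\psi\|_{C^m}$ and $\|\psi\|_{\text{Lip}}$ (by inspecting pp.~390--391 and 395 of \cite{KM}), one factors $a_{\bf t}=a_{{\bf t}_1}a_{{\bf t}_2}$ with $\lfloor{\bf t}_2\rfloor=\llfloor{\bf t}\rrfloor$ and absorbs $a_{{\bf t}_1}$ into the test function via $\tilde\psi(x)=\psi(a_{{\bf t}_1}x)$, whose $C^m$ and Lipschitz norms are unchanged; no re-proof of mixing is needed. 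Likewise your $\epsilon$-balancing mollification of $f$ (and the second mollification of $\psi$) is unnecessary and lossy: Lemma 2.3 of \cite{KM} already bounds the error by $E\bigl(r\int_U|f|+r^{-(2m+N/2)}\|f\|_{C^m}e^{-\gamma t}\bigr)$ with $E=E(\|\psi\|_{C^m},\|\psi\|_{\text{Lip}})$, i.e.\ it uses exactly $\|f\|_{C^m}$ and $\int|f|$; since $\|f*h_n\|_{C^m}\le\|f\|_{C^m}$ and $\int|f*h_n|\le\int|f|$, the paper applies the smooth estimate to $f*h_n$ uniformly and passes to the limit, with no degradation of $\gamma$ and no need for a $C^l$-estimate with $l>m$ of the form you posit.

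The genuine gap is your treatment of the support of $f$. You propose to let the constant depend on a support radius $R$ and ``fix $R$ once and for all'' from the application. But Theorem \ref{mix} carries no support hypothesis, and in the actual application (Lemma \ref{main} feeding into the proof of Theorem \ref{mainthm}) the densities $g_{\overline{\bf a}}$ do \emph{not} have uniformly bounded support: asymptotic $U$-expansion of $\mu_A$ is only an expectation condition, so the partial products $\theta_{k(i-1)}({\overline{\bf a}})$ are bounded only by $M_2^{k(i-1)}$ with $M_2>1$ in general, and $\supp g_{\overline{\bf a}}$ can grow exponentially in $n$. Thus the uniformity of $C$ over all $f$ with $\int|f|<C_2$ and $\|f\|_{C^m}<C_1$, independent of support, is precisely the content needed; the paper obtains it not by fixing $R$ but by observing (second and fifth displays on p.~395 of \cite{KM}) that the constant in Theorem \ref{TKM} can be computed as a function of $\int_U|f|$, $\|f\|_{C^m}$, $\|\psi\|_{C^m}$, $\|\psi\|_{\text{Lip}}$, and $L$ alone. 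As written, your argument proves a statement with an extra support-radius dependence, which is weaker than Theorem \ref{mix} and would not suffice for the paper's application.
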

 \begin{proof} Recall  Lemma 2.3 of \cite{KM}
 \begin{lemma}[ Lemma 2.3 \cite{KM}] Let $f\in C_{\text{comp}}^m(U)$, $0<r<r_0/2$ and $z\in X$ be such that
 \begin{enumerate}
 \item[(i)] $\supp f\subset B_U(r)$, and 
 \item[(ii)] $\pi_z$ is injective on $B_G(2r)$, where $\pi_z:G\ra X$ is defined as $g\mapsto gz$.
  \end{enumerate}
 Then for any $\psi\in C^m_{\text{comp}}(X)\oplus\bbr1_X$ with $\int_X\psi=0,$ there exists $E=E(\|\psi\|_{C^m},\|\psi\|_{\text{Lip}})$ such that for any $t\geq 0$ and $z\in X$ one has
 \begin{equation}\label{partexp}\left|\int_{U}f(u)\psi(a_{\bf t} u z)\right|\leq E\left(r\int_U|f|+r^{-(2m+N/2)}\|f\|_{C^m}e^{-\gamma t}\right),\end{equation}
where $\gamma$ and $m$  are found according to the special gap of the $G$ action on $L^2(X)$ and $N=k_1^2+k_2^2+k_1k_2-1$.
 \end{lemma}
 Note that this version differs slightly from the version in \cite{KM}. Indeed, the $C^m$ norm in equation (\ref{partexp}) could be replaced with the $(m,2)$-Solbolev norm and the space which $\psi $ lies in could be generalized to Lipschitz functions in  the $(m,2)$-Solbolev space. Furthermore, we have changed $E$ from being a function of merely $\psi$ to a function of the $C^m$ norm of $\psi$ and the Lipschitz norm of $\psi$. The final line on page 390 of \cite{KM}  and the second display on page 391  of \cite{KM} justify this strengthening of the definition of $E$.
 Then, observation of the second and fifth displays on page 395 of \cite{KM} finishes the proof that $C$ in Theorem \ref{TKM} can be calculated as a function of $\int_U|f|$, $\|f\|_{C^m}$, $\|\psi\|_{C^m}$, $\|\psi\|_{\text{Lip}},$ and $L$.
 
 To generalize the statement of Theorem \ref{TKM} to cover $f\in C^m_{\text{comp}}(X)$, one need only convolve $f$ with a smooth approximate identity as follows.
 Let $\Omega_n$ be a sequence of open neighborhoods of the identity in $U$ such that $\cap_{n=1}^\infty\Omega_n=\{e\}$ and let 
 $h_n\in C^\infty_{\text{comp}}$ be a sequence of positive functions  such that
  $\int h_n d\lambda_U=1$ and $\supp(h_n)\subseteq \Omega_n$.
  Then for any $f\in C_{\text{comp}}^m(U)$, $\psi\in C^\infty_{\text{comp}}(X)$, $u\in U$, ${\bf t}\in\bbr^k$, $z\in X$ and $n\in\bbn$, 
  $$f*h_n\in C^\infty_{\text{comp}}(U),\qquad\qquad\lim_{n\ra\infty}f*h_n(u)=f(u),$$ 
  $$\lim_{n\ra\infty}\int_Uf*h_n(u)d\lambda_U(u)=\int_Uf(u)d\lambda_U(u),\qquad\qquad
\lim_{n\ra\infty}\int_Uf*h_n(u)\phi(a_{\bf t}uz)d\lambda_U(u)=\int_Uf(x)\phi(a_{\bf t}uz)d\lambda_U(u),$$ 
  $$\|f*h_n\|_{C^m}\leq \|f\|_{C^m},\qquad
  and
 \qquad\int|f*h_n|d\lambda_U\leq\int|f|d\lambda_U.$$
Combining these allows one to generalize to $f\in C^m_{\text{comp}}(X)$.

Finally, one must show that $\lfloor{\bf t}\rfloor$ can be replaced by  $\llfloor{\bf t}\rrfloor$ in equation (\ref{orig}). For this improvement, 
observe that if ${\bf t}\in\bbr^k$ and  $\llfloor{\bf t}\rrfloor=t$ then there exist ${\bf t}_1,{\bf t}_2\in\bbr^k$ such that $a_{\bf t}=a_{{\bf t}_1}
a_{{\bf t}_2}$, and $\lfloor{\bf t}_2\rfloor=t$. Let $\psi$ be given and define $\tilde{\psi}(x)=\psi(a_{{\bf t}_1} x)$. Then 
$\|\tilde{\psi}\|_{C^m}=\|\psi\|_{C^m}$, and $\|\tilde{\psi}\|_{\text{Lip}}=\|\psi\|_{\text{Lip}}$. Thus the theorem holds by applying the strengthened form of Theorem \ref{TKM} we proved above to $\tilde{\psi}$.
 \end{proof}
 
 \begin{remark} The $m$ in Theorem \ref{mainthm} will be chosen so that $m-1$ satisfies Theorem \ref{mix}.
 \end{remark}
 \section{Large Deviations for products of i.i.d. Random variables}
  The following Lemma I learned from conversations with Benoist and Quint, and it appears in the literature as the Chernoff bound
 \begin{lemma}[\cite{C} Theorem 1]\label{large}Let $X_1, X_2,\dots$ be real valued i.i.d. random variables with $E(e^{t|X_i|})<\infty$ for some $t>0$. Let $\rho=E(X_i)$, and $S_n=X_1+\dots+X_n$. Let $\e>0$. Then there exists $\eta>0$ such that 
 $$P\left(\left|\f{S_n}{n}-\rho\right|>\e\right)<e^{-n\eta}.$$
 \end{lemma}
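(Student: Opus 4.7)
The plan is to apply the classical Chernoff/Cramér exponential-moment method, treating the upper and lower tails separately.

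First I would reduce to a one-sided bound. It suffices to show that there exist $\eta_+, \eta_- > 0$ such that $P(S_n/n - \rho > \epsilon) < e^{-\eta_+ n}$ and $P(S_n/n - \rho < -\epsilon) < e^{-\eta_- n}$, because then $\eta = \min(\eta_+, \eta_-)/2$ (say) works for all sufficiently large $n$, and by enlarging the constant on the right we may absorb finitely many initial values of $n$. The lower tail follows from the upper tail applied to $-X_i$, so I only need to handle the upper tail.

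For the upper tail, let $M(s) = E(e^{sX_1})$ denote the moment generating function. The hypothesis $E(e^{t|X_1|}) < \infty$ for some $t > 0$ guarantees that $M(s)$ is finite for all $s \in (-t, t)$, and a standard dominated-convergence argument shows that $M$ is $C^\infty$ on this interval with $M(0) = 1$ and $M'(0) = \rho$. Set $\Lambda(s) = \log M(s)$; then $\Lambda(0) = 0$ and $\Lambda'(0) = \rho$. For any $s \in (0, t)$, Markov's inequality applied to $e^{s S_n}$ gives
\begin{equation*}
P\Big(\tfrac{S_n}{n} - \rho > \epsilon\Big) = P\big(e^{s S_n} > e^{s n(\rho + \epsilon)}\big) \leq e^{-s n (\rho + \epsilon)} E(e^{s S_n}) = e^{n[\Lambda(s) - s(\rho + \epsilon)]},
\end{equation*}
using independence to factor $E(e^{s S_n}) = M(s)^n$.

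Now I would choose $s > 0$ optimally (or at least well enough). Consider $h(s) := \Lambda(s) - s(\rho + \epsilon)$. Then $h(0) = 0$ and $h'(0) = \rho - (\rho + \epsilon) = -\epsilon < 0$. By continuity of $h'$ on $(-t,t)$, there exists $s_0 \in (0, t)$ such that $h(s_0) < 0$; set $\eta_+ := -h(s_0) > 0$. Then $P(S_n/n - \rho > \epsilon) \leq e^{-\eta_+ n}$ for every $n$. Applying the same argument to the i.i.d.\ sequence $-X_i$ (which still satisfies the exponential-moment hypothesis) yields the symmetric bound. Combining the two and taking $\eta = \min(\eta_+, \eta_-)$ gives the lemma.

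There is no real obstacle here: the only non-trivial input is that the exponential moment hypothesis makes $\Lambda$ smooth on a neighborhood of $0$, which is standard and follows from differentiation under the integral sign justified by $e^{t|X_1|} \in L^1$. The rest is the elementary inequality $h'(0) < 0 \Rightarrow h(s_0) < 0$ for small $s_0 > 0$.
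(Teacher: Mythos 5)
The paper offers no proof of this lemma at all---it is quoted as Theorem~1 of the cited Chernoff paper---and your argument is exactly the standard exponential-moment proof of that result (Markov's inequality applied to $e^{sS_n}$, finiteness and smoothness of the moment generating function near $0$, and the observation that $h(s)=\Lambda(s)-s(\rho+\epsilon)$ has $h(0)=0$, $h'(0)=-\epsilon<0$), so it is correct and matches the intended source. The only wrinkle, which you already flag, is that summing the two one-sided tails produces a factor $2$, and the literal strict bound $P\left(\left|\frac{S_n}{n}-\rho\right|>\epsilon\right)<e^{-n\eta}$ can even fail for very small $n$ (e.g.\ $X_i=\pm1$ equiprobably, $\rho=0$, $\epsilon=\frac12$, $n=1$); shrinking $\eta$ gives the bound for all sufficiently large $n$, which is all the paper ever uses, so this is a defect of the statement rather than of your proof.
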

\begin{corollary}\label{exp} Let $X_1,X_2,\dots$ be i.i.d. positively valued random variables with 
$\rho=E\ln(X)<\infty$
%\int_1^\infty P(X_i>y) \f{dy}{y}-\int_0^1 P(X_i<y) \f{dy}{y}<\infty,$$
and $E(e^{|\ln(X_i)|})<\infty$. Let $\rho_1<\rho<\rho_2$, and 
$S_n=X_1\cdot \ldots\cdot X_n$.  Then there exists $\eta>0$ such that 
$$P(S_n<e^{n\rho_1})<e^{-\eta  n},$$
and
$$P(S_n>e^{n\rho_2})<e^{-\eta  n}.$$
\end{corollary}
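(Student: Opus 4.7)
The plan is to reduce the multiplicative statement to the additive statement of Lemma \ref{large} by taking logarithms. Set $Y_i := \ln(X_i)$. The hypothesis that $X_i$ is positive ensures $Y_i$ is a well-defined real-valued random variable, the assumption $E\ln(X)=\rho<\infty$ becomes $E(Y_i)=\rho$, and the moment condition $E(e^{|\ln X_i|})<\infty$ is precisely the hypothesis $E(e^{t|Y_i|})<\infty$ of Lemma \ref{large} at $t=1$. Moreover $\ln(S_n)=Y_1+\cdots+Y_n$, so the products $S_n$ are controlled via the sums to which Chernoff's bound applies.

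Next I would set $\epsilon:=\min(\rho-\rho_1,\,\rho_2-\rho)>0$, which is positive by the assumption $\rho_1<\rho<\rho_2$. Applying Lemma \ref{large} to the i.i.d.\ sequence $(Y_i)$ with this $\epsilon$ produces some $\eta>0$ such that
$$P\left(\left|\tfrac{1}{n}\ln S_n -\rho\right|>\epsilon\right)<e^{-\eta n}.$$

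Finally I would split this two-sided estimate into its one-sided pieces. If $S_n<e^{n\rho_1}$, then $\tfrac{1}{n}\ln S_n<\rho_1\le \rho-\epsilon$, so $\tfrac{1}{n}\ln S_n - \rho<-\epsilon$ and in particular $|\tfrac{1}{n}\ln S_n-\rho|>\epsilon$; hence $P(S_n<e^{n\rho_1})<e^{-\eta n}$. The upper tail $P(S_n>e^{n\rho_2})<e^{-\eta n}$ is obtained symmetrically from $\tfrac{1}{n}\ln S_n - \rho>\epsilon$. There is no real obstacle here: once the logarithmic substitution is made, the statement is literally a rewording of Chernoff's bound, and the only mild bookkeeping is choosing $\epsilon$ small enough to simultaneously handle both sides and observing that a strict inequality on $S_n$ yields a strict inequality on the deviation.
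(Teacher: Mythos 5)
Your proof is correct and follows essentially the same route as the paper: pass to $Y_i=\ln(X_i)$, note the moment hypothesis matches Lemma \ref{large} at $t=1$, and convert the multiplicative tail bounds into the additive Chernoff bound. The paper's own proof is just a terser version of the same argument, so no further comparison is needed.
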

\iftoggle{paper}{ This follows from taking logarithms and applying Lemma \ref{large}.}

\iftoggle{arxive}{
\begin{proof} Let $Y_i=\ln(X_i)$ and $T_n=Y_1+\dots+Y_n$. Then $E(Y_i)=\mu$ and by assumption for $t\leq1$, we have that $E(e^{t|Y_i|})<\infty$. So by Lemma \ref{large} there exists $\eta>0$ such that
\begin{eqnarray*}P(S_n<e^{n\rho_1})&=&P(T_n<n\rho_1)\\
&=&e^{-\eta  n}.
\end{eqnarray*}
The second statement follows similarly.
\end{proof}}

\iftoggle{thesis}{
\begin{proof} Let $Y_i=\ln(X_i)$ and $T_n=Y_1+\dots+Y_n$. Then $E(Y_i)=\mu$ and by assumption for $t\leq1$, we have that $E(e^{t|Y_i|})<\infty$. So by Lemma \ref{large} there exists $\eta>0$ such that
\begin{eqnarray*}P(S_n<e^{n\rho_1})&=&P(T_n<n\rho_1)\\
&=&e^{-\eta  n}.
\end{eqnarray*}
The second statement follows similarly.
\end{proof}}

\begin{corollary} \label{growth}Let $\mu_A$ be the  asymptotically $U$-expanding compactly supported measure on $A$ given in Theorem \ref{mainthm}.
Recall that for $i=1,\dots, k_0$ we defined
$\al_i=E(\ln\pi_i(\mu_A))$.  For $i\in\{1,\dots,k_1\}$ and $j\in\{k_1+1,\dots,k_0\}$ let $\be_{i,j}=(\al_i-\al_j)/2$, and
$$E_n=\left\{{a}=\diag(e^{a_1},\dots,e^{a_{k_0}})\in A_{k_0}: 
\parbox{1.6 in}{For all $i\in\{1,\dots,k_1\}$ and \\$j\in\{k_1+1,\dots,k_0\}$ \\ $a_i-a_j>n\be_{i,j}$}
%$a_i>n \be_i$ for $i=1,\dots,k_1$, and\\  $a_i<n\be_i$  for $i=k_1+1,\dots,k_0$}
\right\}$$
Then there exists $\eta>0$ such that 
$$\mu_A^{*n}(E_n)>1-e^{-\eta n}.$$
\end{corollary}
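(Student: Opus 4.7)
The plan is to reduce the multi-dimensional statement to a finite collection of one-dimensional Chernoff estimates, one for each pair $(i,j)$ with $i \in \{1,\ldots,k_1\}$ and $j \in \{k_1+1,\ldots,k_0\}$, and then take a union bound over this finite set.

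First, I would observe that if $a = a_n \cdots a_1$ is the product of $n$ i.i.d.\ samples from $\mu_A$, then writing $a_l = \diag(e^{X_1^{(l)}},\ldots,e^{X_{k_0}^{(l)}})$ we have $a = \diag(e^{S_1},\ldots,e^{S_{k_0}})$ with $S_i = \sum_{l=1}^n X_i^{(l)}$. Thus for a fixed pair $(i,j)$ the quantity $a_i - a_j$ appearing in the definition of $E_n$ is exactly $\sum_{l=1}^n (X_i^{(l)} - X_j^{(l)})$, a sum of i.i.d.\ real random variables with common mean $\alpha_i - \alpha_j = 2\beta_{i,j} > 0$ (positivity is exactly the asymptotic $U$-expansion hypothesis).

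Next, since $\mu_A$ is compactly supported, each $X_i^{(l)} - X_j^{(l)}$ is bounded, hence in particular satisfies the exponential moment condition $E(e^{t|X_i^{(l)} - X_j^{(l)}|}) < \infty$ for all $t > 0$. Applying Lemma~\ref{large} (the Chernoff bound) to the sequence $X_i^{(l)} - X_j^{(l)}$ with $\varepsilon = \beta_{i,j}$, I obtain $\eta_{i,j} > 0$ such that
$$P\left(\left|\frac{1}{n}\sum_{l=1}^n (X_i^{(l)} - X_j^{(l)}) - 2\beta_{i,j}\right| > \beta_{i,j}\right) < e^{-\eta_{i,j} n};$$
in particular the event $\{a_i - a_j \leq n\beta_{i,j}\}$ has $\mu_A^{*n}$-measure at most $e^{-\eta_{i,j} n}$.

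Finally, applying a union bound over the (finitely many) pairs $(i,j)$ with $i \in \{1,\ldots,k_1\}$, $j \in \{k_1+1,\ldots,k_0\}$, the complement of $E_n$ has $\mu_A^{*n}$-measure at most $k_1 k_2 \cdot e^{-\eta' n}$ where $\eta' := \min_{i,j} \eta_{i,j} > 0$. Choosing any $0 < \eta < \eta'$ absorbs the polynomial prefactor for all sufficiently large $n$, and the few small values of $n$ can be handled by further shrinking $\eta$ (or by noting the bound is trivial for those $n$). There is no real obstacle here — the statement is essentially a packaging of Chernoff's inequality — so the proof is a direct one-paragraph application of Lemma~\ref{large}.
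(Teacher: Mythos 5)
Your proposal is correct and follows essentially the same route as the paper: the paper likewise reduces to the finitely many pairs $(i,j)$ and invokes the Chernoff bound (via Corollary \ref{exp}, which is just Lemma \ref{large} after taking logarithms), with the union bound left implicit. Applying Lemma \ref{large} directly to the differences $X_i^{(l)}-X_j^{(l)}$ as you do is the same argument, so there is nothing substantive to add.
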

\iftoggle{arxive}{\begin{proof}
It suffices to show for each for $i\in\{1,\dots,k_1\}$ and $j\in\{k_1+1,\dots,k_0\}$  that there exists an $\eta_{i,j}$ such that
$$\mu_{A}^{*n}\left(\left\{{ a}=\diag(a_1,\dots,a_{k_0}):a_i-\al_j>n\be_{i,j})\right\}\right)>1-e^{-\eta_{i,j}n}.$$
This follows from Corollary \ref{exp}.
\end{proof}}

\iftoggle{thesis}{\begin{proof}
It suffices to show for each for $i\in\{1,\dots,k_1\}$ and $j\in\{k_1+1,\dots,k_0\}$  that there exists an $\eta_{i,j}$ such that
$$\mu_{A}^{*n}\left(\left\{{ a}=\diag(a_1,\dots,a_{k_0}):a_i-\al_j>n\be_{i,j}\right\}\right)>1-e^{-\eta_{i,j}n}.$$
This follows from Corollary \ref{exp}.
\end{proof}}

Let us conclude this section by proving that the measures we consider are uniformly expanding as defined in equations (\ref{unifex1}) and (\ref{unifex2}). 
Note that, a reader concerned only with the proof of Theorem \ref{mainthm} could skip the remainder of this section, as the condition that the measures are uniformly expanding is not used in the proof of Theorem \ref{mainthm}. 

Let $\mathfrak{sl}(k_0,\bbr)$ denote the Lie algebra of $\Sl(k_0,\bbr)$ and $\mathfrak{u}$ denote the sub-Lie algebra of $\mathfrak{sl}(k_0,\bbr)$ corresponding to $U$. Let $Q$ denote the euclidean projection from $\mathfrak{sl}(k_0,\bbr)\subseteq\Mat_{k_0\x k_0}$ to $\mathfrak{u}$.
\begin{proposition} \label{ac}Suppose $\mu$ is an absolutely continuous measure on $\bbr^k$, and $v\neq 0$ is an arbitrary element of $\mathfrak{sl}(k_0,\bbr)$. Then  for $\mu$-a.e.  $x\in\bbr^k$, $$Q(\Ad(\mathpzc{u}(x))v)\neq0.$$
\end{proposition}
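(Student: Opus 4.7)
The plan is to compute $Q(\mathrm{Ad}(\mathpzc{u}(x))v)$ explicitly as a matrix-valued polynomial in the coordinates of $x$, and then argue that this polynomial is not identically zero. Once that is established, its vanishing locus is a proper real algebraic subvariety of $\mathbb{R}^k$, hence has Lebesgue measure zero, and the conclusion follows from the absolute continuity of $\mu$.

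For the computation, write $\mathpzc{u}(x)=I_{k_0}+N$ where $N=\left[\begin{smallmatrix}0&M\\0&0\end{smallmatrix}\right]$ with $M=M(x)\in\mathrm{Mat}_{k_1\times k_2}(\mathbb{R})$ depending linearly on $x$, and observe $N^2=0$. Writing $v$ in block form $v=\left[\begin{smallmatrix}A&B\\C&D\end{smallmatrix}\right]$ with blocks of the appropriate sizes, a direct expansion gives $\mathrm{Ad}(\mathpzc{u}(x))v=(I+N)v(I-N)=v+Nv-vN-NvN$, and the upper-right block (which is what $Q$ records) is
$$P(M)\;:=\;B+MD-AM-MCM.$$
So the problem reduces to showing that the matrix polynomial $P(M)$ is not the zero polynomial in the entries of $M$, whenever $v\neq 0$.

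Split into two cases. If $C\neq 0$, pick $b,c$ with $C_{bc}\neq 0$; then expanding $-MCM$ one finds the monomial $m_{ab}m_{cd}$ appears with nonzero coefficient $-C_{bc}$ in the $(a,d)$-entry for any choice of $a,d$, so $P$ has a nontrivial quadratic part and therefore is not the zero polynomial. If $C=0$, then $P(M)=B+MD-AM$ is affine in $M$; if it vanishes identically, then $B=0$ and $MD=AM$ for every $M\in\mathrm{Mat}_{k_1\times k_2}$. Testing against the elementary matrices $E_{ij}$ and comparing entries forces $D$ and $A$ to be diagonal with a single common scalar eigenvalue $\lambda$, i.e.\ $A=\lambda I_{k_1}$ and $D=\lambda I_{k_2}$. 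The trace condition $v\in\mathfrak{sl}(k_0,\mathbb{R})$ then gives $k_0\lambda=0$, so $\lambda=0$ and $v=0$, contradicting the hypothesis $v\neq 0$.

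Hence in either case $P$ is a nonzero polynomial in the $k$ coordinates of $x$, and the set $Z:=\{x\in\mathbb{R}^k:P(M(x))=0\}$ is a proper real algebraic subvariety of $\mathbb{R}^k$, so $\lambda_{\mathbb{R}^k}(Z)=0$. Since $\mu\ll\lambda_{\mathbb{R}^k}$, we conclude $\mu(Z)=0$, which is the desired statement. The only step requiring a bit of care is the case $C=0$, where one must use the trace-zero condition of $\mathfrak{sl}(k_0,\mathbb{R})$ to rule out the pathological possibility that $P\equiv 0$ despite $v\neq 0$; all other steps are routine block-matrix bookkeeping.
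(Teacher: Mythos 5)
Your proof is correct and essentially coincides with the paper's own concrete argument: the same block computation giving the upper-right entry $B+\mathbf{x}D-A\mathbf{x}-\mathbf{x}C\mathbf{x}$, the same case split on $C\neq0$ versus $C=0$ (with the trace-zero condition ruling out $A=\lambda I_{k_1}$, $D=\lambda I_{k_2}$), and the conclusion that a nonzero polynomial vanishes only on a Lebesgue-null set. The paper additionally sketches a more abstract justification via projections onto the fixed vectors of a unipotent action, but your explicit computation is exactly its ``more concrete proof.''
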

\begin{proof}Generally, if one has a unipotent group $W$ acting on a vector space $V$, $L$ is the subspace of $W$ fixed vectors and $P_L$ is the projection onto $L$, then for an $v\neq 0\in V$, $\{w\in W:P_L(wv)=0\}$ is a proper algebraic sub variety of $W$. In the case at hand the $U$ fixed vectors is exactly $Q(\mathfrak{sl}(k,\bbr))$, and $P_L=Q$. Then since the measure $\mathpzc{u}_*\mu$ is absolutely continuous with respect to the Haar measure on $U$, the claim follows.

\iftoggle{arxive}{ For a more concrete proof, 
let $v\neq 0\in\mathfrak{sl}(k_0,\bbr)$ be given. Let $A\in \Mat_{k_1\x k_1}$, $B\in \Mat_{k_1\x k_2}$, $C\in \Mat_{k_2\x k_1}$, and $D\in \Mat_{k_2\x k_2}$ be such that
$$v=\left[\begin{array}{cc}A&B\\C&D\end{array}\right].$$
Then for ${\bf x}\in\bbr^k\cong \Mat_{k_1\x k_2}$,
\begin{eqnarray*}
\Ad(\mathpzc{u}(x))v&=&\left[\begin{array}{cc}1&{\bf x}\\0&1\end{array}\right]\left[\begin{array}{cc}A&B\\C&D\end{array}\right]\left[\begin{array}{cc}1&-{\bf x}\\0&1\end{array}\right]\\
&=&\left[\begin{array}{cc}A+{\bf x}C&B-A{\bf x} +{\bf x}D-{\bf x}C{\bf x}\\C&D-C{\bf x}\end{array}\right]
\end{eqnarray*}
If $A,C,D$ are zero matricies then $B\neq0$ and $$Q\left(\Ad(\mathpzc{u}(x))v\right)=Q\left(\left[\begin{array}{cc}0&B\\0&0\end{array}\right]\right)=v\neq0\text{ for all ${\bf x}\in\bbr^k$.}$$
If $C\neq0$, then $$Q\left(\Ad(\mathpzc{u}(x))v\right)=\left[\begin{array}{cc}0&B-A{\bf x} +{\bf x}D-{\bf x}C{\bf x}\\0&0\end{array}\right]$$ is a nontrivial quadratic in $\bbr^k$. Hence is non-zero almost everywhere with respect to an absolutely continuous measure.

If $C=0$, then the $i,j$th entry of $B-A{\bf x} +{\bf x}D$  is
$$b_{ij}-\sum_{l=1}^{k_1}a_{il}{x}_{lj}+\sum_{l=1}^{k_2}{x}_{il}d_{lj}.$$
Thus for the claim to be false,  for $i\in\{1,\dots, k_1\}$ and $j\in\{1,\dots,k_2\}$,
$$g_{ij}({\bf x}):=b_{ij}+\sum_{l=1}^{k_1}a_{il}{x}_{lj}-\sum_{l=1}^{k_2}{x}_{il}d_{lj},$$
must be identically $0$. Hence $B=0$, $a_{ij}=0$ if $i\neq j$, $d_{ij}=0$ if $i\neq j$, and
$$a_{11}=\dots=a_{k_1k_1}=d_{11}=\dots=d_{k_2k_2}.$$
However, this implies $\trace(v)\neq0$ and $v\not\in\mathfrak{sl}(k_0,\bbr)$.}

\iftoggle{thesis}{ For a more concrete proof, 
let $v\neq 0\in\mathfrak{sl}(k_0,\bbr)$ be given. Let $A\in \Mat_{k_1\x k_1}$, $B\in \Mat_{k_1\x k_2}$, $C\in \Mat_{k_2\x k_1}$, and $D\in \Mat_{k_2\x k_2}$ be such that
$$v=\left[\begin{array}{cc}A&B\\C&D\end{array}\right].$$
Then for ${\bf x}\in\bbr^k\cong \Mat_{k_1\x k_2}$,
\begin{eqnarray*}
\Ad(\mathpzc{u}(x))v&=&\left[\begin{array}{cc}1&{\bf x}\\0&1\end{array}\right]\left[\begin{array}{cc}A&B\\C&D\end{array}\right]\left[\begin{array}{cc}1&-{\bf x}\\0&1\end{array}\right]\\
&=&\left[\begin{array}{cc}A+{\bf x}C&B-A{\bf x} +{\bf x}D-{\bf x}C{\bf x}\\C&D-C{\bf x}\end{array}\right]
\end{eqnarray*}
If $A,C,D$ are zero matricies then $B\neq0$ and $$Q\left(\Ad(\mathpzc{u}(x))v\right)=Q\left(\left[\begin{array}{cc}0&B\\0&0\end{array}\right]\right)=v\neq0\text{ for all ${\bf x}\in\bbr^k$.}$$
If $C\neq0$, then $$Q\left(\Ad(\mathpzc{u}(x))v\right)=\left[\begin{array}{cc}0&B-A{\bf x} +{\bf x}D-{\bf x}C{\bf x}\\0&0\end{array}\right]$$ is a nontrivial quadratic in $\bbr^k$. Hence is non-zero almost everywhere with respect to an absolutely continuous measure.

If $C=0$, then the $i,j$th entry of $B-A{\bf x} +{\bf x}D$  is
$$b_{ij}-\sum_{l=1}^{k_1}a_{il}{x}_{lj}+\sum_{l=1}^{k_2}{x}_{il}d_{lj}.$$
Thus for the claim to be false,  for $i\in\{1,\dots, k_1\}$ and $j\in\{1,\dots,k_2\}$,
$$g_{ij}({\bf x}):=b_{ij}+\sum_{l=1}^{k_1}a_{il}{x}_{lj}-\sum_{l=1}^{k_2}{x}_{il}d_{lj},$$
must be identically $0$. Hence $B=0$, $a_{ij}=0$ if $i\neq j$, $d_{ij}=0$ if $i\neq j$, and
$$a_{11}=\dots=a_{k_1k_1}=d_{11}=\dots=d_{k_2k_2}.$$
However, this implies $\trace(v)\neq0$ and $v\not\in\mathfrak{sl}(k_0,\bbr)$.}

\end{proof}

\begin{proposition} \label{expon}Fix $v\in\mathfrak{sl}(k_0,\bbr)$ such that $Q(v)\neq0$, then there exists $\eta>0$ and $\al>0$ for all $n\in\bbn$,
$$\mu_A^\bbn\left({\bf a}=(a_1,a_2,\dots)\in A_{k_0}^\bbn:\f1n\log\|\Ad(a_n\dots a_1)v\|<\alpha\right)<e^{-\eta n}.$$
\end{proposition}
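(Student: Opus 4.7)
The plan is to reduce the statement to a direct application of the Chernoff-type large deviation bound already established in Corollary \ref{exp} (equivalently Corollary \ref{growth}), by exploiting the concrete form of the adjoint action of a diagonal matrix on a single matrix entry.

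First, I would unpack what $Q(v) \neq 0$ means at the level of coordinates. Writing
$$v = \left[\begin{array}{cc}A & B \\ C & D\end{array}\right], \qquad A \in \Mat_{k_1\times k_1}, \; B \in \Mat_{k_1\times k_2}, \ldots,$$
the projection $Q$ onto $\mathfrak{u}$ extracts the block $B$. So $Q(v) \neq 0$ means there exist indices $j_0 \in \{1,\dots,k_1\}$ and $l_0 \in \{k_1+1,\dots,k_0\}$ with $v_{j_0,l_0} \neq 0$. Now for any diagonal $a = \diag(e^{s_1},\dots,e^{s_{k_0}})$, the matrix $\Ad(a)v$ has $(j,l)$-entry equal to $e^{s_j - s_l} v_{j,l}$. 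Writing $a_i = \diag(e^{t_{i,1}},\dots,e^{t_{i,k_0}})$, the product $a_n\cdots a_1$ is diagonal with $j$-th entry $\exp\bigl(\sum_{i=1}^n t_{i,j}\bigr)$, so
$$\bigl(\Ad(a_n\cdots a_1)v\bigr)_{j_0,l_0} = \exp\!\Bigl(\sum_{i=1}^n (t_{i,j_0} - t_{i,l_0})\Bigr)\,v_{j_0,l_0}.$$
In particular $\|\Ad(a_n\cdots a_1)v\| \geq |v_{j_0,l_0}| \exp\bigl(\sum_{i=1}^n (t_{i,j_0}-t_{i,l_0})\bigr)$ for any norm equivalent to the sup-norm on matrix entries.

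Next, I would apply the large deviation estimate. By the asymptotic $U$-expanding assumption, the i.i.d.\ random variables $X_i := t_{i,j_0} - t_{i,l_0}$ (where $(t_{i,1},\dots,t_{i,k_0})$ has law $\log\mu_A$ suitably interpreted) have mean $\alpha_{j_0} - \alpha_{l_0} > 0$. Because $\mu_A$ is compactly supported, the $X_i$ are bounded, so all exponential moment hypotheses of Lemma \ref{large} are trivially satisfied. Choose any $\alpha$ with $0 < \alpha < \alpha_{j_0} - \alpha_{l_0}$; by Lemma \ref{large} applied with $\rho = \alpha_{j_0} - \alpha_{l_0}$ and $\varepsilon = (\alpha_{j_0}-\alpha_{l_0}) - \alpha$, there is $\eta > 0$ such that
$$\mu_A^{\bbn}\!\left(\Bigl\{\tfrac{1}{n}\textstyle\sum_{i=1}^n X_i < \alpha\Bigr\}\right) < e^{-\eta n}.$$
On the complementary event, $\|\Ad(a_n\cdots a_1)v\| \geq |v_{j_0,l_0}| e^{n\alpha}$, and after replacing $\alpha$ by any strictly smaller positive number (to absorb the constant $|v_{j_0,l_0}|$ for all large $n$, or simply starting from $n$ large enough and adjusting $\eta$), the desired inequality $\tfrac{1}{n}\log\|\Ad(a_n\cdots a_1)v\| \geq \alpha$ holds outside a set of $\mu_A^{\bbn}$-measure at most $e^{-\eta n}$.

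There is no real obstacle here; the only thing to double-check is that the moment condition in Lemma \ref{large} is satisfied, which is immediate from compact support of $\mu_A$, and that the choice of $(j_0,l_0)$ depends on $v$ but the constants $\eta,\alpha$ can be taken uniform in such a way as to depend only on $v$ (which is allowed by the statement, since $v$ is fixed). If one wanted $\alpha$ uniform in all $v$ with $Q(v)\neq 0$, one would simply take $\alpha < \min_{j\leq k_1 < l}(\alpha_j - \alpha_l)$, which is positive by the asymptotic $U$-expanding hypothesis.
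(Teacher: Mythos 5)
Your proposal is correct and follows essentially the same route as the paper: the paper also reduces the statement to the Chernoff bound (via Corollary \ref{growth}, which packages Lemma \ref{large} applied to the differences of log-diagonal entries), using the fact that $Q(v)\neq 0$ forces some upper-right block entry of $v$ to be multiplied by $e^{\sum_i (t_{i,j_0}-t_{i,l_0})}$ under $\Ad(a_n\cdots a_1)$. Your version is marginally more direct (you invoke the large-deviation bound only for the single pair $(j_0,l_0)$ rather than the simultaneous event $E_n$, and you are more careful than the paper about absorbing the constant $|v_{j_0,l_0}|$), but the key lemma and the structure of the argument coincide.
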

\begin{proof} Fix $n\geq k$.  Recall that for $i=1,\dots,k_0$, we defined $\al_i=E(\ln\pi(\mu_A))$  and for $i\in\{1,\dots,k_1\}$ and $j\in\{k_1+1,\dots,k_0\}$  we defined $\be_{i,j}=(\al_i-\al_j)/2$. Let $\be_=\min\{|\be_{i,j}|\}$ and $\al=\be\ln\|v\|$.
Then for ${\bf a}\in A_{k_0}^n$ such that $\Pi({\bf a})\in E_n$,
$$\f1n\log\|\Ad(a_n\dots a_1)v\|\geq\f1n\log\left(e^{n\beta}\|v\|\right)=\alpha.$$
Thus the claim follows from Corollary \ref{growth}.
\end{proof}
\begin{proposition}\label{ue} The measure $\mu=\mu_A*\mu_U$ where $\mu_A$ is uniformly expanding and $\mu_U=\mathpzc{u}_*\phi_*\lambda_{[0,1]}$ for a totally non-planar function $\phi:[0,1]\ra\bbr^k$ is a $C^1$ uniformly expanding function. Note that, we only require $\phi$ to be $C^1$ here.
\end{proposition}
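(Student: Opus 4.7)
Fix $v \in \mathfrak{sl}(k_0,\bbr) \setminus \{0\}$; the goal is to establish equation (\ref{unifex2}) for $\mu = \mu_A * \mu_U$, namely that $\mu^{\otimes\bbn}$-almost surely $\liminf_n \frac{1}{n}\log\|\Ad(g_n \cdots g_1) v\| > 0$. Since Proposition \ref{expon} only expands vectors with $Q(v) \neq 0$, the role of $\mu_U$ is to ``mix'' the $Q$-projection away from zero, after which $\mu_A$-type exponential expansion takes over.

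Following Section \ref{three}, view the walk in blocks of length $\ell = kN$ for a large fixed $N$. By equation (\ref{measures}), each $\ell$-block factors as $\tilde{g}_j = A_j U_j$ with $A_j = \Pi_N({\overline{\bf a}}_j) \sim \mu_A^{*\ell}$ and $U_j = \mathpzc{u}({\bf x}_j)$ where ${\bf x}_j \sim \nu_{{\overline{\bf a}}_j}$. Setting $V_j = \Ad(\tilde{g}_j \cdots \tilde{g}_1) v$, the fact that $\Ad(A_j)$ preserves $\mathfrak{u}$ yields
\[
Q(V_j) = \Ad(A_j) Q(\Ad(U_j) V_{j-1}) = \Ad(A_j) P_{V_{j-1}}({\bf x}_j),
\]
where $P_w(y) := Q(\Ad(\mathpzc{u}(y)) w)$ is a polynomial in $y$ of degree $\leq 2$ that is nonzero for every $w \neq 0$, as shown in the proof of Proposition \ref{ac}.

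To extract a uniform positive conditional mean for the per-block log-increment, apply Corollary \ref{growth}: the event $\{A_j \in E_\ell\}$ has probability $\geq 1 - e^{-\eta\ell}$, and on this event $\Ad(A_j)$ expands every $\mathfrak{u}$-vector by at least $e^{\ell\beta}$, where $\beta = \min_{i,j} \beta_{i,j}$. Lemma \ref{main} decomposes $\nu_{{\overline{\bf a}}_j}$ into a bounded-density part ($\|g\|_\infty \leq C_1$) plus an error of mass $\leq e^{-\eta N}$. A polynomial sub-level-set estimate for $P_{\hat{w}}$, together with continuous dependence of its coefficients on $\hat{w}$ and compactness of the unit sphere, gives $\int \log\|P_{\hat{w}}(y)\|\,d\nu_{1,{\overline{\bf a}}_j}(y) \geq -K$ uniformly in $\hat{w}$; the compact support of $\mu$ bounds the error term. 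Combining, for $\ell$ sufficiently large,
\[
E\!\left[\log\frac{\|V_j\|}{\|V_{j-1}\|}\,\bigg|\,V_{j-1}\right] \geq c > 0 \quad \text{uniformly in } V_{j-1}/\|V_{j-1}\|.
\]

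Since the increments $X_j = \log(\|V_j\|/\|V_{j-1}\|)$ are uniformly bounded (as $\mu$ is compactly supported) with conditional mean $\geq c$, decomposing $\sum_{j \leq J} X_j$ as its predictable mean plus a bounded martingale difference and invoking the martingale strong law yields $(1/J)\log\|V_J\| \geq c/2$ eventually almost surely. Hence $\liminf_n (1/n)\log\|\Ad(g_n \cdots g_1) v\| \geq c/(2\ell) > 0$, so $\mu$ is uniformly expanding. The main obstacle is the quantitative uniform-in-$\hat{w}$ sub-level-set estimate: the support of $\nu_{{\overline{\bf a}}_j}$ can grow with $N$, so controlling $\int\log\|P_{\hat{w}}\|\,d\nu_1$ requires combining the $L^\infty$ density bound with the polynomial growth of $\|P_{\hat{w}}\|$ at infinity and a Remez-type inequality applied uniformly over the sphere.
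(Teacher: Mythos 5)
Your strategy is genuinely different from the paper's, and it has a gap at exactly the point you yourself flag. The crux of your block argument is the uniform drift bound $E\left[\log(\|V_j\|/\|V_{j-1}\|)\,\middle|\,V_{j-1}\right]\geq c>0$, which you reduce to a lower bound $\int\log\|P_{\hat w}(y)\|\,d\nu_{1,{\overline{\bf a}}_j}(y)\geq -K$ uniform in the direction $\hat w$ and in ${\overline{\bf a}}_j$, with $K$ eventually dominated by the expansion gain $\ell\beta$. This is not a routine sub-level-set estimate: the measures $\nu_{\overline{\bf a}}$ are built by pushing forward by the diagonal factors $\theta_{k(i-1)}({\overline{\bf a}})$, whose entries can be of size $M_2^{\ell}$, so the support of $\nu_{1,{\overline{\bf a}}_j}$ can have diameter growing exponentially in the block length $\ell$; the $L^\infty$ density bound from Lemma \ref{main} combined with a Remez or sub-level-set estimate for the quadratic $P_{\hat w}$ then yields at best $K(\ell)$ of order $\ell\log M_2$, and $\log M_2$ has no a priori relation to $\beta$ (it can be much larger), so the sign of the drift is not established; taking instead a short block $\ell=k$ leaves the same problem with $K(k)$ versus $k\beta$. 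Since this estimate is what makes the martingale/SLLN step work, the proof is incomplete as written. A smaller point: you invoke Lemma \ref{main}, whose proof assumes $\phi\in C^{m+1}$, while the proposition claims only $C^1$; this is repairable (run the decomposition with $m=0$), but it should be said, since the $C^1$ claim is part of the statement.

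The paper avoids all of this by exploiting the quantifier order in (\ref{unifex2}): one only needs, for each fixed $v\neq0$, almost sure positive growth, so no uniformity over directions is needed at all. For $n\geq k$ the word factors as $g_n\cdots g_1=\Pi({\bf a})\,\mathpzc{u}({\bf x})$ where, conditionally on ${\bf a}$, the law $\nu_{\bf a}$ of ${\bf x}$ is absolutely continuous (Corollary \ref{abscon}, which indeed needs only $C^1$ and no $C^m$ density control); by Proposition \ref{ac}, $Q(\Ad(\mathpzc{u}({\bf x}))v)\neq0$ almost surely, and then Proposition \ref{expon} (resting on Corollary \ref{growth}) gives $\f1n\log\|\Ad(g_n\cdots g_1)v\|\geq\alpha$ outside a set of probability at most $e^{-\eta n}$, so Borel--Cantelli finishes. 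Your scheme, if completed, would give a stronger, direction-uniform expansion statement, but to salvage it you would either have to prove the uniform log-integral bound with the correct dependence on $\ell$ (e.g.\ by showing that mass pushed far out by the expanding $\theta$ factors is also pushed quantitatively away from the zero variety of $P_{\hat w}$), or drop uniformity in $\hat w$ and fall back on a fixed-$v$ argument of the paper's type.
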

\begin{proof}Let us adjust the notation from Section \ref{three} for this proof.
For $n\in\bbn$, define the map $\theta:A_{k_0}^n\ra A_k^n$ as
$$\theta(a_1,\dots,a_n)=\big(\theta_{n-1}(a_1,\dots,a_{n-1}),\theta_{n-1}(a_1,\dots,a_{n-2}),\dots, \theta_1(a_1), e\big),$$
where $\theta_i$ is as defined in Equation (\ref{eqthetai}).
 For ${\bf a}\in A_k^n$, define $\Phi_{n,{\bf a}}:[0,1]^n\ra\bbr^k$ as
$$\Phi_{n,{\bf a}}(x_1,\dots,x_n):=\sum_{i=1}^na_i\phi(x_i).$$
As shown in equation (\ref{mudecomp}), for $n\in\bbn$ and measurable function $f$ on $G$,
\begin{equation*}\int_Gfd\mu^{*n}=\int_{A_{k_0}^n}\int_{[0,1]^n}f\left(\Pi({\bf a}) u\left(\Phi_{n,\theta({\bf a})}({\bf x})\right)\right)d\lambda_{[0,1]^n}({\bf x})d\mu_A^n({\bf a}).\end{equation*}
For $n\geq k$ and all ${\bf a}\in A_{k_0}^n$, $$(\Phi_{n, \theta({\bf a})})_*\lambda_{[0,1]^n}=\tau_n*(\Phi_{k,\theta({\bf a})})_*\lambda_{[0,1]^k},$$ for some probability measure $\tau_n$.
By Lemma \ref{abscon}, $(\Phi_{k,\theta({\bf a})})_*\lambda_{[0,1]^k}$  is absolutely continuous with respect to Lebesgue measure on $\bbr^k$. Thus for $n\geq k$,
\begin{equation}\label{decomp2}\mu^{*n}=\int\Pi({\bf a})_*\mathpzc{u}_*\nu_{\bf a} d\mu_{A}^n({\bf a}),\end{equation}
where $\nu_{\bf a}$ is an absolutely continuous measure on $\bbr^k$.

Fix $v\neq0\in\mathfrak{sl}(k,\bbr)$. By combining Propositions \ref{ac}  with equation(\ref{decomp2}), it follows that for  $\mu^n$ almost every  $(g_1,\dots,g_n)\in G^n$, that
$$\Ad(g_n\dots g_1)v=\Ad(a)\Ad(u)v,\text{ with $Q(\Ad(u)v)\neq0$}$$
for some $a\in A_{k_0}^k$ and $u\in U$. Thus by Proposition \ref{ue}
 there exist a $\al>0$ and an $\eta>0$ such that for $n\geq k$,
$$\mu^{n}\left({\bf g}=(g_1,g_2,\dots)\in G^\bbn:\f1n\log\|\Ad(g_n\dots g_1)v\|<\alpha\right)<e^{-\eta n}.$$
Hence by the Borel Cantelli lemma,
$$\mu^\bbn\left({\bf g}=(g_1,g_2,\dots)\in G^\bbn:\f1n\log\|\Ad(g_n\dots g_1)v\|<\alpha\text{ for infinitely many $n$ }\right)=0.$$
Thus almost surely with respect to $\mu^\bbn$ and every $v\neq0\in\mathfrak{sl}(k_0,\bbr)$,
$$\lim_{n\ra\infty}\f1n\log\|\Ad(g_n\dots g_1)v\|>0.$$
\end{proof}
\section{Proof of Theorem \ref{mainthm}}
This section combines Lemma \ref{main}, the exponential mixing results of \cite{KM}, and Corollary \ref{exp} to prove Theorem \ref{mainthm}. 
\begin{proof} Let $L$ be a compact subset of $X$. Choose $\eta_1>0$ and $m\in\bbn$ to satisfy Theorem \ref{mix} and assume that $\phi$ is $C^{m+1}$. Fix $f\in C^{\infty}_c(X)$,  $z\in L$, and $n\in\bbn$. Then by (\ref{measures})
\begin{equation*}\int_Xfd(\mu^{*k n}*\delta_z)=\int_{(A_{k_0}^k)^n}\int_{\bbr^k}f\left(\Pi_n({\overline{\bf a}}) \mathpzc{u}({\bf x}) z\right)d\nu_{\overline{\bf a}}({\bf x})d(\mu_A^k)^n({\overline{\bf a}}).\end{equation*}
By applying Lemma \ref{main} to each ${\overline{\bf a}}\in (\sup (\mu_A)^k)^n$, there exist $\eta_2$, $C_1>0$, and $\nu_{\overline{\bf a}}=g_{\overline{\bf a}}\lambda_{\bbr^k}+\nu_{2,{\overline{\bf a}}}$ 
 such that  $\nu_{2,{\overline{\bf a}}}(\bbr^k)<e^{-\eta_2 n}$, and $\|g_{\overline{\bf a}}\|_{C^m}<C_1$. Hence
\begin{eqnarray}\label{1stmix}\int_Gfd(\mu^{*k n}*\delta_z)&=&\int_{(A_{k_0}^k)^n}\int_{\bbr^k}g_{\overline{\bf a}}({\bf x})f\left(\Pi_n({\overline{\bf a}}) \mathpzc{u}({\bf x}) z\right)d\lambda_{\bbr^k}({\bf x})d(\mu_A^k)^n({\overline{\bf a}})\\
&&+\int_{(A_{k_0}^k)^n}\int_{\bbr^k}f\left(\Pi_n({\overline{\bf a}}) \mathpzc{u}({\bf x}) z\right)d\nu_{2,{\overline{\bf a}}}({\bf x})d(\mu_A^k)^n({\overline{\bf a}}).\nonumber\end{eqnarray}
For ${\overline{\bf a}}\in (\sup (\mu_A)^k)^n$ such that $\Pi({\overline{\bf a}})\in E_{n k}$, we will apply Theorem \ref{mix} to 
\begin{equation*}\label{goal} \int_{\bbr^k}g_{\overline{\bf a}}({\bf x})f\left(\Pi_n({\overline{\bf a}}) \mathpzc{u}({\bf x}) z\right)d\lambda_{\bbr^k}({\bf x}).\end{equation*}
For such  ${\overline{\bf a}}\in E_{n k}$, $\Pi_n({\overline{\bf a}})=a_{\bf{t}}$, with ${\bf{t}}=(t_1,\dots,t_{k_0})$,  $t_i-t_j>n(\al_i-\al_j)/2$ for $i\in\{1,\dots,k_1\}$ and $j\in\{k_1+1,\dots,k_0\}$, and $t_i<n\beta_i$ for $i=k_1+1,\dots,k_0$. Recall that $\be=\min_{i=1,\dots,k_0}\{|\be_i|\}$. Then
$\llfloor{\bf{t}}\rrfloor>nk\be$. 
By construction $g_{\overline{\bf a}}$ is positive and $\int| g_{\overline{\bf a}}|\,d\lambda_{\bbr^k}<1$. Thus with $C=C(C_1,1,f,L)$ satisfying Theorem \ref{mix}, for ${\overline{\bf a}}\in (\sup (\mu_A)^k)^n$ such that $\Pi({\overline{\bf a}})\in E_{n k}$
\begin{eqnarray}\left|\int_{\bbr^k}g_{\overline{\bf a}}({\bf x})f\left(\Pi_n({\overline{\bf a}}) \mathpzc{u}({\bf x}) z\right)d\lambda_{\bbr^k}({\bf x})-\int_{\bbr^k}g_{\overline{\bf a}}\int_Xf\right|&=&\left|\int_{\bbr^k}g_{\overline{\bf a}}({\bf x})f\left(a_{\bf{t}} \mathpzc{u}({\bf x}) z\right)d\lambda_{\bbr^k}({\bf x})-\int_{\bbr^k}g_{\overline{\bf a}}\int_Xf\right|\nonumber\\
\label{mixing}&\leq&Ce^{-\eta_1nk\be}.
\end{eqnarray}
%By the fact that $\nu_{2,{\overline{\bf a}}}(\bbr^k)<e^{-\eta_2 n}$, and the fact that $\nu_{\overline{\bf a}}$ was a probability measure 
%\begin{equation}\label{approx} \int_{\bbr^k}g_{\overline{\bf a}} d\lambda_{\bbr^k}>1-e^{-\eta_2 n}.\end{equation}
By Corollary \ref{growth}  there exists $\eta_3>0$ such that
\begin{equation}\label{sizee}(\mu_{A}^k)^n(E_{k n})>1-e^{\eta_3k n}.\end{equation}
Thus by combining (\ref{1stmix}),  (\ref{mixing}), and (\ref{sizee}),
\begin{eqnarray*}
\left|\int_Xfd(\mu^{*k n}*\delta_z)-\int_X f\right|
&\leq&\left|\int_{(A_{k_0}^k)^n}\int_{\bbr^k}g_{\overline{\bf a}}({\bf x})f\left(\Pi_n({\overline{\bf a}}) \mathpzc{u}({\bf x}) z\right)d\lambda_{\bbr^k}({\bf x})d(\mu_A^k)^n({\overline{\bf a}})-\int_X f\right|\\
&&+\left|\int_{(A_{k_0}^k)^n}\int_{\bbr^k}f\left(\Pi_n({\overline{\bf a}}) \mathpzc{u}({\bf x}) z\right)d\nu_{2,{\overline{\bf a}}}({\bf x})d(\mu_A^k)^n({\overline{\bf a}})\right|\\
%\leq&\left|\int_{(A_{k_0}^k)^n}\left[\int_{\bbr^k}g_{\overline{\bf a}}({\bf x})f\left(\Pi_n({\overline{\bf a}}) \mathpzc{u}({\bf x}) z\right)d\lambda_{\bbr^k}({\bf x})-\int_Xf \right]d(\mu_A^k)^n({\overline{\bf a}})\right|&\\
%&+\int_{(A_{k_0}^k)^n}\int_{\bbr^k}\left|f\right|_\infty d\nu_{2,{\overline{\bf a}}}({\bf x})d(\mu_A^k)^n({\overline{\bf a}})&\\
%\leq&\left|\int_{E_{n k}}\left[\int_{\bbr^k}g_{\overline{\bf a}}({\bf x})f\left(\Pi_n({\overline{\bf a}}) \mathpzc{u}({\bf x}) z\right)d\lambda_{\bbr^k}({\bf x})-\int_Xf \right]d(\mu_A^k)^n({\overline{\bf a}})\right|&\\
%&+\left|\int_{E_{n k}^c}\left[\int_{\bbr^k}g_{\overline{\bf a}}({\bf x})f\left(\Pi_n({\overline{\bf a}}) \mathpzc{u}({\bf x}) z\right)d\lambda_{\bbr^k}({\bf x})-\int_Xf \right]d(\mu_A^k)^n({\overline{\bf a}})\right|&\\
%&+\|f\|_\infty e^{-\eta_2 n}&\\
&\leq&\left|\int_{E_{n k}}\left[\int_{\bbr^k}g_{\overline{\bf a}}({\bf x})f\left(\Pi_n({\overline{\bf a}}) \mathpzc{u}({\bf x}) z\right)d\lambda_{\bbr^k}({\bf x})-\int_{\bbr^k}g_{\overline{\bf a}}\int_Xf \right]d(\mu_A^k)^n({\overline{\bf a}})\right|\\
&&+\int_{{\overline{\bf a}}: \Pi({\overline{\bf a}})\in E_{n k}}\left|\int_{\bbr^k}g_{\overline{\bf a}}d\lambda_{\bbr^k}({\bf x})\int_Xf-\int_Xf \right|d(\mu_A^k)^n({\overline{\bf a}})\\
&&+\int_{{\overline{\bf a}}: \Pi({\overline{\bf a}})\in E_{n k}^c}\left[\int_{\bbr^k}g_{\overline{\bf a}}({\bf x})\|f\|_\infty d\lambda_{\bbr^k}({\bf x})+\left|\int_Xf \right|\right]d(\mu_A^k)^n({\overline{\bf a}})\\
&&+\|f\|_\infty e^{-\eta_2 n}\\
%\leq&\int_{E_{n k}} C_2e^{-\eta_1\beta}d(\mu_A^k)^n({\overline{\bf a}})&[(\ref{mixing})]\\
%&+\left|\int_{E_{n k}}\left[\int_{\bbr^k}g_{\overline{\bf a}}\int_Xf-\int_Xf \right]d(\mu_A^k)^n({\overline{\bf a}})\right|&\\
%&+\int_{E_{n k}^c}\left[\int_{\bbr^k}g_{\overline{\bf a}}({\bf x})\|f\|_\infty d\lambda_{\bbr^k}({\bf x})+\left|\int_Xf \right|\right]d(\mu_A^k)^n({\overline{\bf a}})&\\
%&+\|f\|_\infty e^{-\eta_2 n}&\\
%\leq&\int_{E_{n k}} C_2e^{-\eta_1\beta}d(\mu_A^k)^n({\overline{\bf a}})&\\
%&+\int_{E_{n k}}e^{-\eta_2 n}d(\mu_A^k)^n({\overline{\bf a}})&\\
%[(\ref{approx})]
%&+e^{-\eta_3k n}\left(\|f\|_\infty+\left|\int_Xf \right|\right)&[(\ref{sizee})]\\
%&+\|f\|_\infty e^{-\eta_2 n}&\\
&\leq&C_2e^{-\eta_1n\beta}+e^{-\eta_2 n}\left|\int_Xf \right|+e^{-\eta_3k n}\left(\|f\|_\infty+\left|\int_Xf \right|\right)+\|f\|_\infty e^{-\eta_2 n}.
\end{eqnarray*}
\end{proof}
\section{Proof of Corollary \ref{mainthm3}} 

Corollary \ref{mainthm3} is a result of the following more general lemma:
\begin{lemma} Let $H$ be a group acting on a homogeneous  space $Y$ with $H$-invariant measure $\lambda_Y$. Let $\mu$ be a measure on $H$.  Suppose for any compact set $L\subseteq Y$, $f\in C_c(Y)$, and $\e>0$, that there exists an $N\in\bbn$, such that for any $z\in L$ and $n>N$,
\begin{equation}\label{conv}\left|\int fd(\mu^{*n}*\delta_z)-\int f\,d\lambda_Y\right|<\e.\end{equation}
Let $g_1,g_2,\dots$ be $i.i.d$ with law $\mu$. Then for every $z\in Y$ and $f\in C_c(Y)$, for almost every $(g_1,g_2,\dots)$ with respect to $\mu^{\otimes\bbn}$
$$\f1n\sum_{i=1}^nf(g_i\dots g_1z)\ra\int f\, d\lambda_Y.$$
\end{lemma}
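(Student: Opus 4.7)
I would combine a bounded-difference martingale with the hypothesised uniform-on-compacts convergence and a tightness argument, closing up via a variance estimate along a polynomial subsequence.

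Fix $z\in Y$ and $f\in C_c(Y)$; set $W_i:=g_i\cdots g_1 z$, $I_f:=\int f\,d\lambda_Y$, and $Tg(w):=\int g(hw)\,d\mu(h)$, so that $E[g(W_i)\mid\mathcal{F}_{i-1}]=Tg(W_{i-1})$ with $\mathcal{F}_i:=\sigma(g_1,\dots,g_i)$. For any $g\in C_c(Y)$, $M_n^{(g)}:=\sum_{i=1}^n[g(W_i)-Tg(W_{i-1})]$ is a martingale with increments bounded by $2\|g\|_\infty$, so Azuma--Hoeffding gives the summable tail $P(|M_n^{(g)}|>\delta n)\leq 2\exp(-\delta^2 n/(8\|g\|_\infty^2))$, and hence $M_n^{(g)}/n\to 0$ a.s. Iterating $m$ times (using that $\mu$ is compactly supported in the setting of the paper, so each $T^jf\in C_c(Y)$) shows that, for each fixed $m$,
$$\frac{1}{n}\sum_{i=1}^n f(W_i)-\frac{1}{n}\sum_{i=1}^n T^m f(W_i)\longrightarrow 0\quad\text{a.s.}$$

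Because $\lambda_Y$ is a probability measure and $\mu^{*n}*\delta_z\to\lambda_Y$ vaguely, no mass escapes to infinity; Prohorov's theorem gives tightness, so for each $\epsilon>0$ there is a compact $K_\epsilon$ with $P(W_n\in K_\epsilon^c)<\epsilon$ uniformly in $n$. The hypothesis applied with $L=K_\epsilon$ yields $m_\epsilon$ such that $|T^mf-I_f|<\epsilon$ on $K_\epsilon$ for all $m\geq m_\epsilon$, and combining with the previous display I get
$$\limsup_n\Bigl|\frac{1}{n}\sum_{i=1}^n f(W_i)-I_f\Bigr|\leq\epsilon+2\|f\|_\infty L_\epsilon\quad\text{a.s., where }\ L_\epsilon:=\limsup_n\frac{1}{n}\sum_{i=1}^n\mathbf{1}_{K_\epsilon^c}(W_i).$$

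To control $L_\epsilon$ almost surely I would run the same machinery on a bump $\chi_\epsilon\in C_c(Y)$ with $\chi_\epsilon\leq\mathbf{1}_{K_\epsilon}$ and $\int\chi_\epsilon\,d\lambda_Y\geq 1-2\epsilon$: the covariance bound $|\mathrm{Cov}(\chi_\epsilon(W_i),\chi_\epsilon(W_j))|\leq Ce^{-\eta|j-i|}$, coming from the exponential decay $|T^m\chi_\epsilon-\int\chi_\epsilon\,d\lambda_Y|\leq C'e^{-\eta m}$ uniform on compacts (which is where the quantitative output of Theorem \ref{mainthm} enters), gives $\mathrm{Var}(n^{-1}\sum\chi_\epsilon(W_i))=O(1/n)$. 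Along the subsequence $n_k=k^2$, Chebyshev combined with Borel--Cantelli then gives $n_k^{-1}\sum_{i=1}^{n_k}\chi_\epsilon(W_i)\to\int\chi_\epsilon\,d\lambda_Y$ a.s., and filling between the $n_k$ using $\|\chi_\epsilon\|_\infty\leq 1$ extends this to the full sequence. The sandwich $\mathbf{1}_{K_\epsilon^c}\leq 1-\chi_\epsilon$ then yields $L_\epsilon\leq 3\epsilon$ a.s. Sending $\epsilon\to 0$ along a countable sequence and applying the same reasoning to $-f$ to control the $\liminf$ completes the argument. The main obstacle is precisely this last step: upgrading the uniform $L^1$ bound $E[\mathbf{1}_{K_\epsilon^c}(W_n)]\leq\epsilon$ to an almost-sure control on the empirical visit frequency to $K_\epsilon^c$, which in this paper's setting is handled cleanly by the quantitative exponential mixing furnished by Theorem \ref{mainthm}.
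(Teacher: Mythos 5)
Your reductions up to the display involving $L_\epsilon$ are sound: Azuma--Hoeffding needs only that each $T^jf$ is bounded by $\|f\|_\infty$ (compact support of $\mu$ is neither assumed in the lemma nor needed), tightness of the sequence $\mu^{*n}*\delta_z$ does follow from the hypothesis, and the estimate $\limsup_n\bigl|\frac1n\sum_{i\le n}f(W_i)-I_f\bigr|\le\epsilon+2\|f\|_\infty L_\epsilon$ is correct. The genuine gap is the final step. The lemma assumes only the qualitative statement (\ref{conv}); no rate is among its hypotheses, and your control of $L_\epsilon$ rests on the exponential decay $|T^m\chi_\epsilon-\int\chi_\epsilon\,d\lambda_Y|\le C'e^{-\eta m}$ on compacta, i.e.\ the conclusion of Theorem \ref{mainthm}, which is external to the lemma being proved. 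With (\ref{conv}) alone the covariance estimate degrades to: for every $\epsilon'>0$ there is $N$ such that $|\mathrm{Cov}(\chi_\epsilon(W_i),\chi_\epsilon(W_j))|\le C\epsilon'$ whenever $i\ge N$ and $j-i\ge N$, so that $\mathrm{Var}\bigl(n^{-1}\sum_{i\le n}\chi_\epsilon(W_i)\bigr)=O(N/n)+O(\epsilon')$. The $O(\epsilon')$ term does not vanish as $n\to\infty$, the Chebyshev tails along $n_k=k^2$ are not summable, Borel--Cantelli does not apply, and the almost-sure bound $L_\epsilon\le3\epsilon$ does not follow. As you yourself flag, this is exactly the crux; resolving it by importing Theorem \ref{mainthm} means you have proved (essentially) Corollary \ref{mainthm3} in the paper's specific setting, not the abstract lemma as stated.

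For comparison, the paper's own proof takes a shorter, purely second-moment route: fix a compact $L\ni z$ with $\lambda_Y(L)>1-\epsilon$, apply (\ref{conv}) simultaneously to $f$ (normalized so $I_f=0$) and to a cutoff function equal to $1$ on $L$ to obtain, uniformly over $y\in L$ and $n\ge N$, both $|E_y[f(W_n)]|<\epsilon$ and smallness of the escaped mass $\mu^{*n}*\delta_y(L^c)$; then expand $E\bigl[(\frac1n\sum_{i\le n}f(W_i))^2\bigr]$, treating the $O(Nn)$ near-diagonal pairs trivially and splitting each far pair $i<j$, $j-i\ge N$, according to whether $W_i\in L$ (Markov property plus the two uniform bounds). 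This gives a variance bound of the form $3N\|f\|_\infty^2/n+2\|f\|_\infty\epsilon+\|f\|_\infty^2\epsilon$, and Chebyshev concludes. Note that this argument, as written, delivers convergence in probability of the Ces\`aro averages; your additional martingale and Borel--Cantelli machinery is precisely an attempt to secure the almost-sure statement, which the paper's write-up passes over quickly. To stay within the lemma's hypotheses you would need a rate-free mechanism for the almost-sure control of visit frequencies to $K_\epsilon^c$ (for instance a drift/Lyapunov function in the spirit of Eskin--Margulis), rather than an appeal to Theorem \ref{mainthm}.
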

\begin{proof}
Let $f$ be a continuous function on $Y$ such that $\int f d\lambda_X=0$ and let $z\in X$ be given. Fix $\e>0$ be given and fix a compact set $L$ such that $\lambda_X(L)>1-\e$ and $z\in L$.  By simultaneously applying (\ref{conv}) to $f$ and a function $\psi\in C_c(Y)$ which takes values in $[0,1]$ and is $1$ on $L$, there exists $N\in\bbn$ such that for $n\geq N$ and $y\in L$,  $$\mu^{*n}*\delta_y(L^c)<\e,$$
and
$$\left|\int f(gy)d\mu^{*n}(g)-\int f d\lambda_X\right|<\e.$$
Let $\nu=\mu^{\otimes\bbn}$ be the product measure on $H^{\otimes\bbn}$. Then for $n>2N\in \bbn$,
\begin{eqnarray*}
\int\left(\f1n\sum_{i=1}^nf(g_i\dots g_1z)\right)^2\,d\nu
%&=&\f{1}{n^2}\sum_{\substack{1\leq i,j\leq N:\\|i-j|<N}}\int
%f(g_i\dots g_1z) f(g_j\dots g_1z)d\nu\\&&+\f{1}{n^2}\sum_{\substack{N\leq i,j\leq n:\\|i-j|<N}}\int
%f(g_i\dots g_1z) f(g_j\dots g_1z)d\nu\\&&+ \f{2}{n^2}\sum_{\substack{N\leq i<j\leq n:\\|i-j|\geq N}}\int
%f(g_i\dots g_1z) f(g_j\dots g_1z)d\nu\\
&\leq&\f{1}{n^2}3N n\|f\|_\infty^2\\
&&+ \f{2}{n^2}\left|\sum_{\substack{N\leq i<j\leq n:\\|i-j|\geq N}}\int 1_L(g_i\dots g_1z)
f(g_i\dots g_1z) f(g_j\dots g_1z)d\nu\right|\\
&&+ \f{1}{n^2}\left|\sum_{\substack{N\leq i,j\leq n:\\|i-j|\geq N}}\int1_{L^c}(g_i\dots g_1z)
f(g_i\dots g_1z) f(g_j\dots g_1z)d\nu\right|\\
%&\leq&\f{1}{n^2}4N n\|f\|_\infty^2\\
%&&+ \f{1}{n^2}\sum_{\substack{N\leq i,j\leq n:\\|i-j|\geq N}}\|f\|_\infty\int 1_L(g_i\dots g_1z)
% f(g_j\dots g_{i-1}g_i \dots g_1z)d\nu\\
%&&+ \f{1}{n^2}\sum_{\substack{N\leq i,j\leq n:\\|i-j|\geq N}}\int1_{L^c}(g_i\dots g_1z)\|f\|_\infty^2d\nu\\
&\leq&\f{3N n}{n^2}\|f\|_\infty^2+ \f{2}{n^2}\sum_{\substack{N\leq i<j\leq n:\\|i-j|\geq N}}\|f\|_\infty\e+\f{1}{n^2}\sum_{\substack{N\leq i,j\leq n:\\|i-j|\geq N}}\mu^{*i}*\delta_y(L^c)\|f\|_\infty^2.
\end{eqnarray*}
As $\e$ was arbitrary as $n\ra\infty$, 
$$\int\left(\f1n\sum_{i=1}^nf(g_i\dots g_1z)\right)^2\,d\nu\ra 0.$$
Let $\delta>0$. By Chebyshev's inequality for every $n\in\bbn$,
\begin{equation*}
P\left(\f1n\left|\sum_{i=1}^nf(g_i\dots g_1z)\right|>\de\right)\leq\f{1}{\de^2}\int\left(\f1n\sum_{i=1}^nf(g_i\dots g_1z)\right)^2\,d\nu.
\end{equation*}
Since the right hand side goes to 0 for every $\de>0$, the claim follows.
\end{proof}
\section{Proof of Theorem \ref{mainthm2}}\label{sev}
Recall that in this case, $\phi:[0,1]\ra\bbr^k$ was a piecewise $C^{m+1}$ function and $x_1,\dots,x_k\in[0,1]$ were such that $\rspan\{\phi'(x_1),\dots,\phi'(x_k)\}=\bbr^k$. We fixed $0<c<1$ and a second arbitrary compactly supported probability  measure $\mu_R$ on $\bbr^k$. Then define $$\mu_U=\mathpzc{u}_*(c\phi_*\lambda_{[0,1]}+ (1-c)\mu_R).$$ 
Since $\phi$ is piecewise differentiable, for each $i\in\{1,\dots,k\}$ there exists a neighborhood $I_i$ of $x_i$ such that $\phi$ is differentiable on $I_i$. Define $F_\phi: A_k^k\x I_1\x\dots\x I_k\ra\bbr^k$ as
$$(a_1,\dots,a_k,y_1,\dots,y_k)\mapsto \det\left[a_1\psi'(y_1),\dots,a_k\psi(y_k)\right].$$
Let ${\bf x}=(x_1,\dots,x_k)\in[0,1]^k$ where $x_1,\dots,k_k$ are the fixed points in the problem statement. Recall remark \ref{strong}. The proof of  Lemma \ref{neigh} only required differentiability in a neighborhood of ${\bf x}_0$. Thus there exist a neighborhood $
\Omega_{e}$ of ${\bf e}\in A^k_k$, and $r>0$ such that $F_\phi({\bf a},{\bf y})$ is bounded away from $0$ on $
\Omega_e\x I_r({\bf x})$, and for ${\bf a}\in\Omega_e$, $\Phi_{\bf a}({\bf y}):=\sum_{i=1}^ka_i\phi(y_i)$ is injective on $I_r({\bf x})$. Thus by remark \ref{strong} and Lemma \ref{bound}, there exist $C>0$ and $g_{\bf a}\in C^m$ and $\tau_{{\bf a},2}$ for all ${\bf a}\in\Omega_e$ such that
$$\Phi_a(\lambda_{[0,1]^k}\big|_{I_r({\bf x})})=g_{\bf a}\lambda_{\bbr^k}+\tau_{{\bf a},2},$$
$\|g_{\bf a}\|_{C^m}<C$, and $\tau_{2,{\bf a}}<\delta$, where we choose $\delta=\f12\lambda_{[0,1]}(I_r({\bf x}))$.
Hence $g_{\bf a}\lambda_{\bbr^k}(\bbr^k)>\delta$.

Recall that for a fixed $a\in A_{k_0}$, define $C_a\in A_k$ to be the unique diagonal matrix such that 
$a^{-1}\mathpzc{u}(x)a=\mathpzc{u}(C_a  x)$. Pick $\e>0$ such that for every $a\in A_{k_0,\e}$, we have that $C_a\in \Omega_{e}$, $C_a\in A_{k,2\e}$, and $(e^{2\e})^{k^2+km}\delta<1$.   For ${\overline{\bf a}}\in (A_{k_0,\e}^k)^n$ define 
$\nu_{\overline{\bf a}}$ as in (\ref{nu}).  Recall (\ref{stru}). By applying this same decomposition to $\nu_{\overline{\bf a}}$, we can write $\nu_{\overline{\bf a}}$ as a sum of an absolutely continuous measure $g_{\overline{\bf a}} \,\lambda_{\bbr^k}$ and a second measure $\tau_{2,{\overline{\bf a}}}$ such that $\tau_{2,{\overline{\bf a}}}(\bbr^k)<\delta^n$. Furthermore, since we chose $(e^{2\e})^{k^2+km}\delta<1$,  by arguing as in (\ref{CMbound}), we can uniformly bounded $\|g_{\overline{\bf a}}\|_{C^m}$.
Thus we have the following version of Lemma \ref{main} in this case:

\begin{lemma} There exist $\eta>0$ and $C_1>0$ such that for all $n\in\bbn$ if ${\overline{\bf a}}\in( A_{k_0,\e}^k)^n$, then there exist positive measures $\nu_{1,{\overline{\bf a}}}$ and $\nu_{2,{\overline{\bf a}}}$ on $\bbr^k$ such that 
\begin{enumerate}
\item $\nu_{\overline{\bf a}}=\nu_{1,{\overline{\bf a}}}+\nu_{2,{\overline{\bf a}}}$
\item $\nu_{1,{\overline{\bf a}}}=g_{\overline{\bf a}}\lambda_{\bbr^k}$ and $\|g_{\overline{\bf a}}\|_{C^m}<C_1$
\item\label{item3} $\nu_{2,{\overline{\bf a}}}(\bbr^k)<e^{-\eta n}$.
\end{enumerate}
\end{lemma}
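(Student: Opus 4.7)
The plan is to replicate the iterated decomposition of Lemma \ref{main}, but using only the local structure available in the piecewise setting together with the fact that $\epsilon$ has been chosen small enough to tame both the expansion of $C_a$ and the resulting inflation of $C^m$ norms. The setup preceding the statement already produces the base-case decomposition: for every ${\bf a}\in\Omega_e$ we have
\[
\Phi_{\bf a}(\lambda_{[0,1]^k}\big|_{I_r({\bf x})}) = g_{\bf a}\lambda_{\bbr^k}+\tau_{2,{\bf a}},
\]
with $\|g_{\bf a}\|_{C^m}<C$ and $g_{\bf a}\lambda_{\bbr^k}(\bbr^k)>\delta$. What remains is to organize how this single-step control propagates through $n$ convolutions.

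First I would write out $\nu_{{\bf a}_i}$, for ${\bf a}_i\in A_{k_0,\epsilon}^k$, as $\tau_{1,{\bf a}_i}+\tau_{2,{\bf a}_i}$ where $\tau_{1,{\bf a}_i}=g_{{\bf a}_i}\lambda_{\bbr^k}$ comes from the restricted push-forward above (using $c\phi_*\lambda_{[0,1]}$ for each of the $k$ factors), and $\tau_{2,{\bf a}_i}$ collects everything else: the contributions of $(1-c)\mu_R$, the portions of $\lambda_{[0,1]^k}$ outside $I_r({\bf x})$, and the residual $\tau_{{\bf a},2}$. A uniform mass bound $\tau_{2,{\bf a}_i}(\bbr^k)<1-\delta'$ for some $\delta'>0$ (absorbing the factor of $c^k$) is then automatic. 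Next I would run exactly the telescoping decomposition of $\nu_{\overline{\bf a}}$ displayed in equation (\ref{stru}) with these new $\tau_{1,\theta({\bf a}_i)}$ and $\tau_{2,\theta({\bf a}_i)}$, and define $\nu_{2,{\overline{\bf a}}}$ as the pure product $(\theta_{(n-1)k}({\overline{\bf a}})_*\tau_{2,\theta({\bf a}_n)})*\cdots*\tau_{2,\theta({\bf a}_1)}$. Its total mass is at most $(1-\delta')^n$, giving the exponential bound in item (3).

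For the absolutely continuous part, I would argue exactly as in (\ref{acont})--(\ref{growth1}). The key quantitative input is that $a\in A_{k_0,\epsilon}$ forces $C_a\in A_{k,2\epsilon}$, so the diagonal entries of $\theta_{k(i-1)}({\overline{\bf a}})$ lie in $[e^{-2\epsilon k(i-1)},e^{2\epsilon k(i-1)}]$. Consequently, the push-forward of $g_{{\bf a}_i}\lambda_{\bbr^k}$ under $\theta_{k(i-1)}({\overline{\bf a}})$ is again $C^m$ with norm at most $(e^{2\epsilon})^{(i-1)(k^2+km)}\|g_{{\bf a}_i}\|_{C^m}$, and Lemma \ref{norm} applied to each convolution with a subsequent $\tau_{2,\theta({\bf a}_j)}$ (mass $<1$) only contracts this bound. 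Summing the $n$ terms of the telescoping decomposition gives
\[
\|g_{\overline{\bf a}}\|_{C^m}\leq C\sum_{i=0}^{n-1}\bigl((e^{2\epsilon})^{k^2+km}(1-\delta')\bigr)^{i},
\]
and the choice of $\epsilon$ ensuring $(e^{2\epsilon})^{k^2+km}\delta<1$ (equivalently, after replacing $\delta$ by $1-\delta'$ with a possibly smaller $\epsilon$) makes this a convergent geometric series, yielding a uniform constant $C_1$.

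The one place where caution is needed -- and what I would identify as the main obstacle -- is the bookkeeping at item~(3). Because $\mu_U$ now carries the $(1-c)\mu_R$ summand, the naive pointwise decomposition produces $2^n$ cross terms of the form (mix of nice and singular factors), not the clean ``first-index-with-$\tau_2$'' structure exploited in (\ref{stru}). This is resolved by re-ordering the telescoping to always peel off the leftmost (largest-index) factor first, so that each term either ends in a $\tau_1$-factor -- contributing to $g_{\overline{\bf a}}$ with the geometric-series bound above -- or consists entirely of $\tau_2$-factors, which is precisely the definition of $\nu_{2,{\overline{\bf a}}}$. After that combinatorial rearrangement, the three claims follow as in the proof of Lemma \ref{main}, with $\eta=-\log(1-\delta')$.
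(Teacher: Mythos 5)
Your proposal is essentially the paper's own argument: localize via Remark \ref{strong} and Lemmas \ref{neigh} and \ref{bound} to get a block-level decomposition whose absolutely continuous part has mass bounded below by some $\de'>0$, fold the $\mu_R$ summand and the mass of $\lambda_{[0,1]^k}$ outside $I_r({\bf x})$ into the $\tau_2$-piece, and rerun the expansion (\ref{stru}) and the estimate (\ref{CMbound}) with $M_1^{-1}$ replaced by $e^{2\e}$ and with per-block residual mass $1-\de'$ in place of the small $\e$ of Lemma \ref{main}; your condition $(e^{2\e})^{k^2+km}(1-\de')<1$ is the one actually needed for the geometric series (and is the correct reading of the paper's displayed condition involving $\de$), giving $\eta=-\log(1-\de')$. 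Two remarks on your last paragraph. First, the worry about $2^n$ cross terms is moot once you have fixed the two-piece decomposition at the level of blocks: expanding the $n$-fold convolution in first-success form yields exactly the $n+1$ terms of (\ref{stru}), no rearrangement needed. Second, your stated re-ordering (``peel off the leftmost, largest-index factor first'') is the wrong direction: the terms must be indexed by the \emph{lowest} block $i$ carrying a $\tau_1$-factor, with $\tau_2$'s in blocks $1,\dots,i-1$ and full probability measures above, so that the $C^m$ inflation $(e^{2\e})^{(i-1)(k^2+km)}$ of the pushed density is paired with the compensating mass $(1-\de')^{i-1}$; peeling from the top would pair it instead with $(1-\de')^{n-i}$, and the term with $i=n$ alone has norm of order $(e^{2\e})^{(n-1)(k^2+km)}$, destroying the uniform bound. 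Your displayed geometric series already corresponds to the correct bottom-up ordering of (\ref{stru}), so with that phrasing corrected the proof is the same as the paper's.
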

With this Lemma, by arguing as in  the proof of Theorem \ref{mainthm}, the proof of Theorem \ref{mainthm2} follows .

\end{document}